\newtheorem{theorem}{Theorem}[section]
\newtheorem{lemma}[theorem]{Lemma}
\newtheorem{remark}[theorem]{Remark}
\newcommand{\bR}{{\bf R}}
\newcommand{\R}{\mathbb{R}}
\newcommand{\bQ}{{\bf Q}}
\newcommand{\cF}{\mathcal{F}}
\newcommand{\cB}{\mathcal{B}}
\newcommand{\cL}{\mathcal{L}}
\numberwithin{equation}{section}
\newcommand{\N}{{\mathbb{N}}}
\newcommand{\E}{{\mathbb{E}}}
\newcommand{\bW}{\mathbf{W}}
\newcommand{\bwW}{\widetilde{\mathbf{W}}}
\newcommand{\boW}{\overline{\mathbf{W}}}
\newcommand{\wW}{\widetilde{W}}
\newcommand{\oW}{\overline{W}}
\newcommand{\oGamma}{\overline{\Gamma}}
\newcommand{\wN}{\widetilde{N}}
\newcommand{\wV}{\widetilde{V}}
\newcommand{\wGamma}{\widetilde{\Gamma}}
\begin{document}

\title{Foreign exchange options on Heston-CIR model under L\'{e}vy process framework}

\author{Giacomo Ascione $^{1}$}
\author{Farshid Mehrdoust $^{2}$}
\author{Giuseppe Orlando  $^{*,3}$}
\author{Oldouz Samimi $^{2}$}

\thanks{%
\noindent
    $^{1}$
    Scuola Superiore Meridionale, Largo S. Marcellino, 10, 80138 Napoli, Italy\\
	$^{2}$
	Department of Applied Mathematics, Faculty of Mathematical Sciences,
	University of Guilan, P. O. Box:~41938-1914, Rasht, Iran,\\
	$^{3}$
	Department of Economics and Finance,
	University of Bari ``Aldo Moro'', Largo Abbazia S.~Scolastica, I-70124 Bari, Italy}
\thanks{$^{*}$ Corresponding author}

\begin{abstract}
In this paper, we consider the Heston-CIR model with L\'{e}vy process for pricing in the foreign exchange (FX) market by providing a new formula that better fits the distribution of prices. To do that, first, we study the existence and uniqueness of the solution to this model. Second, we examine the strong convergence of the L\'{e}vy process with stochastic domestic short interest rates, foreign short interest rates and stochastic volatility. Then, we apply Least Squares Monte Carlo (LSM) method for pricing American options under our model with stochastic volatility and stochastic interest rate. Finally, by considering real-world market data, we illustrate numerical results for the four-factor Heston-CIR L\'{e}vy model. 
\end{abstract}

\maketitle

\markboth{On Heston-CIR diffusion models under L\'{e}vy process}{G.~Ascione, F.~Mehrdoust, G.~Orlando, O.~Samimi}
\sloppy
\begin{small}

\textbf{Keywords}: Heston-CIR model, Variance Gamma process, L\'{e}vy processes, foreign short interest rates.\\

\textbf{JEL Classification}: C22, G15, F31\\

\textbf{MSC 2010}: 60G51, 60G40, 62P05
\end{small}

\section{Introduction} 

In this paper, we study the problem of pricing American options in foreign exchange (FX) markets.  Our purpose is to consider a Heston hybrid model, that is the Heston-CIR model. The said model correlates stochastic volatility and stochastic interest rate, under the assumption that the underlying stock returns follow a L\' evy process. In FX markets, option pricing under stochastic interest rate and stochastic volatility was first introduced by Grzelak et al. \cite{Grzelak} and Van Haastrecht et al. \cite{vanHaastrecht2011}.

Let $(\Omega, \cF, \{\cF\}_{t \geq 0}, \bQ)$ be a filtered probability space with the risk-neutral probability $\bQ$ on which a Brownian motion is defined $W= \{W_t\}_{t \geq 0}$. Consider an asset price process  $\{S_t\}_{t \geq 0}$ following a Geometric Brownian Motion (GBM) process and satisfying the following stochastic differential equation (SDE)
\begin{equation} \label{Eq:GBM}
dS_t = r S_t \ dt + \sigma S_t \ dW_t, \ t \geq 0,
\end{equation}
where $S_t$ is the asset price at time $t$ and the constant parameters $r>0$ and $\sigma>0$ are, respectively, the (domestic) risk-neutral interest rate and the volatility. This model was presented by Black-Scholes and Merton \cite{Merton} and was a major step in arbitrage-free option pricing because it evaluates options at the risk-neutral rate regardless of the risk and return of the underlying. Meanwhile the model \eqref{Eq:GBM} became very popular in finance among practitioners because it takes positive values and it has simple calculations (for example, see \cite{Merton}, \cite{Higham}, \cite{Higham3}, \cite{Steven},  \cite{Orlando}, \cite{Orlando2}, \cite{Orlando3}, \cite{Orlando4}). But along with all these advantages, this model has also some noteworthy shortcomings such as constant interest rate, constant volatility and absence of jump term.

The need for more sophisticated frameworks, to cope with the above-mentioned shortfalls, has led the ensuing literature to the development of a number of papers for pricing  derivatives on risky assets based on stochastic asset price models generalizing the classical GBM paradigm (\cite{Hull}, \cite{Stein}, \cite{Heston}, \cite{Benhamou}, \cite{Mikhailov}, \cite{Fallah}, \cite{Teng})

For example, with regard to interest rates, the most widely used model in finance is the Cox-Ingersoll-Ross (CIR) model which assumes the risk-neutral dynamics of the instantaneous interest rate to be described by a stochastic process $\{r_t\}_{t\ge 0}$  (also known as {\it square-root process} ) driven by the SDE
\[
dr_t  = a (b - r_t) \ dt + \sigma \sqrt{r_t} \ dW_t, 
\]
where $a$ is the speed of adjustment of interest rates to the long-term mean $b$ and $\sigma$ is the volatility. 

Further, concerning volatility, one of the pioneering papers is that of Steven L. Heston \cite{Heston}  who derived the pricing formula of a stock European option when the dynamics of the underlying stock price are described by a model with non-constant volatility supposed to be stochastic.

Given the popularity and advantages of both the Heston and the CIR models, the hybrid version of them is the so-called Heston-CIR model \cite{Ahlip, Grzelak, Grzelak1, Grzelak2, Van} (see also \cite{Fallah} for application to the American option pricing), where the dynamics of the underlying asset price $S_t$ is given by
\begin{equation}\label{HCmodel}
\begin{split}
& dS_t  = S_t (r_t - \delta) \ dt + S_t \sqrt{V_t}\, dW^1_t  , \quad S_0 > 0    \\  
& dV_t  = \kappa_v (\theta_v- V_t) \ dt + \sigma_v \sqrt{V_t}\, dW^2_t, \quad V_0 >0 \\
& dr_t  = \kappa_d (\theta_r - r_t) \ dt + \sigma_d \sqrt{r_t} \, dW^3_t, \quad r_0 >0,
\end{split}
\end{equation}
where  $V_t$ and $r_t$ are, respectively, the stochastic variance and the stochastic domestic interest rate of the stock return.  All processes are defined under the domestic risk-neutral measure, $\bQ$. $\sigma_v$ is the second-order volatility, i.e.  the volatility of variance (often called the {\it volatility of volatility} or, shorter, {\it vol of vol}),  and $\sigma_d$ denotes the volatility of the short rate $r_t$. $\kappa_v$ and $\kappa_d$ are the mean reverting rates of the variance and short rate processes, respectively. We assume the long-run mean of the asset price $\delta$ is constant. The parameters $\theta_v$ and $\theta_d$ are respectively, the long-run mean of the variance and interest rate. $ S_0, V_0, r_0$ denote the initial asset price,  variance and interest rate. The standard Brownian motions $\{W_t^i\}_{t\ge 0}$, for $ i=1, 2,3$, are supposed to be correlated. The correlation matrix  is given by
\begin{align}
\begin{pmatrix} 1 & \rho_{sv} & \rho_{sd} \\  \rho_{sv} & 1  & \rho_{vd} \\ \rho_{sd} & \rho_{vd} & 1      \end{pmatrix}. \nonumber
\end{align}
\subsection{Jumps in financial models}
With regard to jumps, there are lots of reasons to utilize them in financial models. In the real world, asset price empirical distributions present fat tails and high peaks and are asymmetric, behaviour that deviates from normality. From a risk management perspective, jumps allow quantifying and taking into account the risk of strong stock price movements over short time intervals, which appears non-existent in models with continuous paths.  Anyway, the strongest argument for using discontinuous models is simply the presence of jumps in observed prices. Thus, we want to model these phenomena with jump diffusion or L\'evy processes. A jump-diffusion process is a stochastic process in which discrete movements (i.e. jumps) take place at fixed or random arrival times. Those jumps represent transitions between discrete states and the time spent on a given state is called holding time (or sojourn time). In \cite{Merton},  Merton introduced  a jump-diffusion model for pricing derivatives as follows 
\begin{align}
\frac{dS_t}{S}& = \mu \, dt + \sigma \, dW_t + dJ_t ,\nonumber \\
S&= \lim_{u \to t^{-}} S_u, \nonumber
\end{align}
where $\mu$ and $\sigma$ are constants and $J_t $ is a jump process independent of $W_t$.  Since the publication of the Merton paper \cite{Merton}, several jump-diffusion models have been considered in the academic finance literature, such as the compound Poisson model, the Kou model, the  Stochastic Volatility Jump (SVJ) model (see \cite{Kou1}, \cite{Kou} and references therein), the Bates model \cite{Bates} and the Bates Hull-White model \cite{Kienitz}.
\subsection{L\'{e}vy processes}
In this paper, we consider L\'{e}vy processes, 
commonly used in mathematical finance because they are very flexible and have a simple structure. Further, they provide the appropriate tools to adequately and consistently describe the evolution of asset returns, both in the real and in the risk-neutral world. 
A one-dimensional  L\'{e}vy process defined on $(\Omega, \cF, \{\cF\}_{t \geq 0}, \bQ)$,  is a c\`{a}dl\`{a}g, adapted process  $L= \{L_t\}_{t\geq 0}$ with $L_0=0$ a.s., having stationary (homogeneous) and independent increments and also, it is continuous in probability (see, for example, \cite{Oksendal, Oksendal1, Barndorff-Nielsen, Boyarchenko}).

When the discounted process $\{e^{(r-d)t} S_t\}_{t\ge 0}$ is a martingale under $\bQ$, the asset price dynamics under the L\'{e}vy process $L$ can be modeled as 
 \begin{align}
S_t = S_0 \exp{(-(r-d)t\;+L_t)}, \nonumber
\end{align}
where the parameter $r$ is the (domestic) risk-free interest rate and $d\ge 0$ is the continuous dividend yield of the asset. Several processes of this kind have been studied. Well-known models are the Variance Gamma (VG) \cite{Madan}, \cite{Madan1} the Normal Inverse Gaussian (NIG) \cite{Barndorff1977} and the Carr-Geman-Madan-Yor (CGMY).

\subsection{Normal Inverse Gaussian (NIG)}
The NIG  process was first introduced in 1977 \cite{Barndorff1977} and adopted in finance in 1997 \cite{Barndorff1997} as a handy model to represent fat tails and skews. 
Let us denote by $\mu\in\bR$ the location parameter, $\alpha$ the tailedness, $\beta$ the skewness satisfying $0\le |\beta|\le\alpha$, $\delta >0$ the scale, $\eta = \sqrt{\alpha^2 - \beta^2}$ and by $K_1$ a modified Bessel function of the third kind.  The NIG distribution is defined on the whole real line having  density function (\cite{Barndorff})
\[ 
f(x; \alpha,\beta,\mu,\delta) = \frac{\alpha\delta K_1 \left(\alpha\sqrt{\delta^2 + (x - \mu)^2}\right)}{\pi \sqrt{\delta^2 + (x - \mu)^2}} \; e^{\delta \eta + \beta (x - \mu)}, \quad x\in\bR.
\]
The NIG process of L\'{e}vy type can be represented via random time change of a Brownian motion as follows.

Let  $W^{\eta} = \{W^{\eta} (t)\}_{t\ge 0}$ be a Brownian motion with drift $\eta$ and diffusion coefficient 1, and 
\[ 
A_t=\inf\{s>0 : W^{\eta} (s)=\delta t\}, \quad t\ge 0,
\] 
be the inverse Gaussian process with parameters $\delta$ and $\eta$. For each $t\ge 0$, $A_t$ is a random time defined as the first passage time to level $\delta t$ of $W^{\eta}$. Further, denote by $W^{\beta} $ a second Brownian motion, stochastically independent of $W^{\eta}$, with drift $\beta$ and diffusion coefficient 1. Thus the NIG process $X= \{X_t\}_{t\ge 0}$ is the random time changed process 
\[
X_t :=W^{\beta}(A_t) + \mu t.
\]
It can be interpreted as a subordination of a Brownian motion by the inverse Gaussian process (\cite{Barndorff1997}). The distribution of the unit period increment $X_{t+1} - X_t$ follows the NIG distribution.

\subsection{Variance Gamma (VG)}
The VG model was first introduced by Madan et al. in \cite{Madan}, and then widely used to describe the behaviour of stock prices (see \cite{Madan1} and references therein). The goal of using the VG model for fitting stock prices is to improve, with respect to the classical GBM, the ability to replicate skewness and kurtosis of the return distribution. In fact, the VG process is an extension of the GBM aimed at solving some shortcomings of the Black and Scholes model.  A gamma process $\gamma(t) \equiv \gamma(t;\mu, \nu),\, t\ge 0,$ is a continuous-time stochastic process with mean rate $\mu $ and  variance rate $\nu $, such that for any $h>0$, the increments $\gamma(t+h) - \gamma(t)$ over non-overlapping intervals of equal length, are independent gamma distributed random variables with shape parameter $\alpha=\mu^2 h/\nu$ and scale parameter $\beta=\mu/\nu$.

A VG process of L\'{e}vy type,  $X= \{X_t\}_{t\ge 0}$, can be represented in two different ways:
\begin{itemize}
	\item {} As a difference $X_t := U_t - D_t$, where $U_t$ and $D_t$ are processes with i.i.d.  gamma distributed increments. Notice that, as the gamma distribution assumes only positive values, the process $X$ is increasing; 
	\item {}  As a subordination of a Brownian motion by a gamma process, i.e. $X$ is the random time changed process $X_t := W(\gamma(t))$, where $(\gamma(t))_{t\ge 0}$ is a gamma process with unit mean $\mu$ and variance rate $\nu $, and $W(t)$ is a Brownian motion with zero drift and variance $\sigma^2$. 
\end{itemize} 
In this work, we shall consider the second representation of a VG process (see Section \ref{Sec:Heston-CIR Levy model for FX market}).

\subsection{Stylized facts on returns, options' dynamics and Variance Gamma (VG)} \label{Sec:StylFacts}

As mentioned several reasons lead us to adopt a VG model. The stylized facts about returns, options' dynamics are documented by Fama \cite{Fama}, Akgiray et al. \cite{Akgiray}, Bates \cite{Bates}, Madan et al. \cite{Madan}, Campa et al. \cite{Campa}, etc. 
Namely, they are: a) ‘‘smiles’’ and ‘‘smirks’’ effects b) jumps c) finite sum of the absolute log price changes d) excess of kurtosis/long tailedness e) finite moments for at least the lower powers of returns; f) extension to multivariate processes with elliptical multivariate distributions to ensure consistency with the capital asset pricing model (CAPM).

Regarding ‘‘smiles’’ and ‘‘smirks’’, it is well known that the Black-Scholes formula is strongly biased across both moneyness and maturity and that underprices deep out-of-the-money puts and calls. For example 
Rubinstein \cite{Rubinstein1, Rubinstein2} reports evidence that implied volatilities are higher for deeply in- or out-of-the-money options. This is because stock return distributions are negatively skewed with higher kurtosis than allowable in a BS log-normal distribution \cite{Bates}. Furthermore "pure diffusion based models have  difficulties  in  explaining  smile  effects  in,  in  particular,  short-dated  option prices" \cite{Madan1}. 

The so-called stochastic volatility (SV) models were one of the first solutions to the problem as they allowed a flexible distributional structure by correlating volatility shocks and underlying stock returns. The correlation in SV models controls both skewness and kurtosis. The downside of that approach is that the volatility is a diffusion process which means it can only follow a continuous sample path (thus is unable to internalize enough short-term kurtosis \cite{Bakshi}).

To solve that concern, jump-diffusion models were introduced. In fact, they are capable to explain negative skewness and high implicit kurtosis in option prices where the random discontinuous jumps are represented by a Poisson component \cite{Bakshi}.

On the other hand, the VG process (for which the Black Scholes model is a parametric special case) is a pure jump process and there is no diffusion component. In the VG process, the returns are normally distributed, conditional on the realization of a random time with gamma density. Therefore the resulting stochastic process and associated option pricing model provide "a robust three parameter model" \cite{Madan}. Apart from the volatility of the Brownian motion the VG can control both kurtosis and skewness. 

Last but not least, concerning potential extensions to elliptical multivariate distributions consistent with the CAPM \cite{Owen}, Madan et al. \cite{Madan} have proposed a suitable generalization (even though a shortcoming of their approach is that all marginal distributions have identical kurtosis).

\subsection{Organization of the article}
This article is organized as follows. Section \ref{Sec:Heston-CIR Levy model for FX market} explains the Heston-CIR L\'{e}vy model for FX market. In Section \ref{Sec:Uniqueness} we study the local existence and uniqueness of solution for the  stochastic differential equations as described by the Variance Gamma process.
Section \ref{Sec:Convergence of the simulation} shows that the forward Euler-Maruyama approximation method converges almost surely to the solution of the Heston-CIR model.
Section \ref{Sec:Dataset} illustrates the considered dataset. Section \ref{Sec:NumericalExperiments} displays numerical simulations obtained with the LSM method for L\'{e}vy processes on the American put and call options and compares estimated prices with real-wolrd market prices. Section \ref{Sec:Conclusions} concludes.

\section{The Heston-CIR of VG L\'{e}vy type model for FX market} \label{Sec:Heston-CIR Levy model for FX market}
This section describes a generalization of the Heston-CIR model \eqref{HCmodel} for evaluating options price, under the domestic risk-neutral measure $\bQ$, in FX markets. The resulting model is evaluated at random times given by a gamma process. 

Let $(\Omega,\cF,\{\cF_t\}_{t \ge 0},\bQ)$ be a complete filtered probability space, where $\bQ$ is the risk-neutral probability measure, and let us denote by $\E$ the expected value operator. On such a space, let $\bW(t)=(W_1(t),W_2(t),W_3(t),W_4(t))$ be a $4$-dimensional Brownian motion with instantaneous correlation matrix
\begin{equation*}
  \Sigma_{\bW}=\begin{pmatrix} 1 & \rho_{sv} & \rho_{sd} & \rho_{sf} \\
  \rho_{sv} & 1 & \rho_{vd} & \rho_{vf} \\
  \rho_{sd} & \rho_{vd} & 1 &\rho_{df}\\
  \rho_{sf} & \rho_{vf} & \rho_{df} & 1 \end{pmatrix}.
\end{equation*}
Clearly, being them correlation coefficients, it must hold $\rho_{sv},\rho_{sd},\rho_{sf},\rho_{vd},\rho_{vf}, \rho_{df} \in (-1,1)$ and $\Sigma_{\bW}$ has to be positive definite (assuming that $W_i$ are linearly independent).
Let us now consider a Gamma subordinator $\gamma(t)$ independent of $\bW(t)$. Precisely, as introduced in the previous section, a Gamma subordinator with mean rate $\mu$ and variance rate $\nu$ is a pure-jump increasing L\'evy process such that $\gamma(0)=0$ and $\gamma(t+h)-\gamma(t)\overset{d}{=}\gamma(h)$ is a Gamma distributed random variable with shape parameter $\alpha_h=\frac{\mu^2h}{\nu}$ and scale parameter $\beta_h=\frac{\mu}{\nu}$, where with $\overset{d}{=}$ we denote the equality in distribution. One can easily express the L\'evy measure of $\gamma(t)$ in terms of the shape and scale parameters $\alpha:=\alpha_1$ and $\beta:=\beta_1$, as
\begin{equation*}
    g_\gamma(ds)=g_\gamma(s)ds=\frac{\alpha}{s}e^{-\beta s}1_{\R^+}(s)ds,
\end{equation*}
where $\R^+:=(0,+\infty)$ and for any set $A$ and any $B \subset A$ we denote for any $x \in A$
\begin{equation*}
    1_{B}(x)=\begin{cases} 1 & x \in B \\
    0 & x \not \in B. \end{cases}
\end{equation*}
Clearly, $g_\gamma(ds)$ satisfies the usual integrability assumption on the L\'evy measure of a subordinator, i.e.
\begin{equation*}
    \int_{0}^{+\infty}(1 \wedge s)g_{\gamma}(ds)<+\infty,
\end{equation*}
where for any $n \in \N$ and any real numbers $x_1,x_2,\dots,x_n \in \R$ we denote
\begin{align*}
    x_1 \wedge x_2 \wedge \dots \wedge x_n&=\min\{x_1,\dots,x_n\}\\
    x_1 \vee x_2 \vee \dots \vee x_n&=\max\{x_1,\dots,x_n\}.
\end{align*}
Let us stress that $g_\gamma(0,+\infty)=+\infty$, i.e. the subordinator has infinite activity. This guarantees that we cannot reduce $\gamma(t)$ to the (trivial) case of a compound Poisson process (see \cite[Section 1.2]{Bertoin}). Up to a time-scaling, we could always assume $\alpha=1$. However, in the following, we will, in any case, consider any shape parameter $\alpha>0$.
Now let us consider, for any $j=1,2,3,4$, the subordinate Brownian motion $\Gamma_j(t)=W_j(\gamma(t))$. A deep study on subordinate Brownian motions is given in \cite{KimSBM}. In particular, if $\alpha=1$, $\Gamma_j(t)$ is a Variance Gamma process for any $j=1,2,3,4$, as observed in \cite{Madan1}. By virtue of the previous observation, we will call $\Gamma_j(t)$ a Variance Gamma process even if $\alpha \not = 1$, as they exhibit, up to a suitable time-scaling, the same properties. In particular, let us recall that $\Gamma_j(t)$ admits finite moments of any order and its sample paths are almost surely of bounded variation. Finally, we denote $\Gamma(t)=(\Gamma_1(t),\Gamma_2(t),\Gamma_3(t),\Gamma_4(t))$.

Fix now $t_0>0$. The Heston-CIR of VG L\'{e}vy type model is defined by the following system of SDEs, for $t \ge t_0$:
\begin{equation}\label{HCL}
\begin{aligned}\small
& dS(t) = S(t-) (r^d(t-) - r^f(t-)) \ dt+ S(t-)(\theta_s d\gamma(t)+ \sqrt{V(t-)} \,  d\Gamma_1(t),\quad S(t_0) = S_0 > 0  \\
& dV(t) = \kappa_v (a_v - V(t-)) \ dt +\theta_v d\gamma(t)+ \sigma_v \sqrt{V(t-)} \, d\Gamma_2(t),\quad V(t_0) = V_0 > 0  \\
& dr^d(t)= \kappa_d (a_d - r^d(t-)) \ dt +\theta_d d\gamma(t)+ \sigma_d \sqrt{r^d(t-)} \, d\Gamma_3(t),\quad r^d(t_0) = r^d_0 > 0   \\
& dr^f(t)= \left(  \kappa_f (a_f - r^f(t-)) -  \sigma_f\rho_{sf} \sqrt{V(t-)\, r^f(t-)}\right)  dt+\theta_f d\gamma(t)+ \sigma_f  \sqrt{r^f(t-)} \, d\Gamma_4(t),\\
&\qquad\qquad\qquad\qquad\qquad\qquad\qquad\qquad\qquad\qquad\qquad r^f(t_0) = r^f_0 > 0 ,  
\end{aligned}
\end{equation}
where $V(t)$ is the stochastic variance of the underlying asset price $S(t)$, $r^d(t)$ and $r^f(t)$ are, respectively, the domestic and foreign stochastic short interest rates. The parameters $\kappa_v,\kappa_d,\kappa_f>0$ are the speeds of mean reversion, $a_v,a_d,a_f>0$ are the foreign long-run means, $\sigma_v,\sigma_d,\sigma_f>0$ are the volatilities, $\theta_v,\theta_d,\theta_f \ge 0$ are the drifts respectively of $V,r^d$ and $r^f$. Moreover, we assume that $\rho_{sf}$ is the correlation between a domestic asset $S$ and the foreign interest rate $r^f$ and $\theta_s \ge 0$ is its drift. For any function $f:\R_{t_0}^+ \to \R$, where $\R_{t_0}^+:=[t_0,+\infty)$, we denote, if it exists, $f(t-)=\lim_{s \to t^-}f(s)$. Finally, we assume that the initial data $S_0,V_0,r_0^d,r_0^f>0$ are deterministic.
%

%
In the next section, we will investigate (local) the existence and uniqueness of the solution of the system defining the Heston-CIR VG L\'evy type model.

\section{Existence and uniqueness of the solution of VG L\'{e}vy type model} \label{Sec:Uniqueness}

First of all, let us consider, without loss of generality, $t_0=0$. Let us fix some further notation. For any topological space $\mathcal{X}$ we denote by $\mathcal{B}(\mathcal{X})$ its Borel $\sigma$-algebra. Moreover, for any c\'adl\'ag function $f:\R^+_0 \to \R$ we denote $\Delta f(t)=f(t)-f(t-)$.  For any $n \in \N$ and $x \in \R^n$ we denote $x=(x_1,\dots,x_n)$ to identify its components and the action of a function $g:\R^n \to \R$ will be denoted both with $g(x)$ or $g(x_1,\dots,x_n)$ depending on the necessity of highlighting the single components of the argument. For $x,y \in \R^n$ we denote $\langle x,y\rangle=\sum_{j=1}^{n}x_jy_j$ the usual scalar product and $|x|^2=\sum_{j=1}^{n}x_j^2$ the Euclidean norm. Moreover, for any fixed $n,m \in \N$ and any bounded function $\phi:\R^n \to \R^m$ we denote $\left\|\phi \right\|_{\infty}=\max_{x \in \R^n}|\phi(x)|$. For any function $\phi:\R^n \to \R^m$ we denote the support of $\phi$ as ${\rm supp}(\phi):={\rm cl}(\{x \in \R^n: \ \phi(x)=0\})$, where, for any $A \subseteq \R^n$, ${\rm cl}(A)$ is the topological closure of $A$. We denote by $C_b(\R^n)$ the space of continuous and bounded functions $f:\R^n \to \R$ and by $C_c^\infty(\R^n)$ the space of infinitely differentiable functions $\phi:\R^n \to \R$ with compact support. For any $m \in \N$ we denote by $C^m(\R^n)$ the space of continuous functions $f:\R^n \to \R$ with continuous partial derivatives up to order $m$ and by $C^m_b(\R^n)$ its subspace of bounded function with bounded partial derivatives. Finally, given a sequence $\{a_k\}_{k \in \N}$, we denote $a_k \uparrow a$ (resp. $a_k \downarrow a$) as $k \to +\infty$ if $a_k$ is non-decreasing (resp. non-increasing) and $\lim_{k \to +\infty}a_k=a$. In the proofs, we will denote by $C>0$ any generic constant whose value is not crucial.

To prove the local existence and uniqueness of the solutions of \eqref{HCL} we will make use of several different techniques from both the theory of stochastic differential equations with jumps and c\'adl\'ag rough differential equations. In place of proving a single existence and uniqueness theorem for the whole system, we will proceed step by step, providing first the results concerning the equations of $V(t)$ and $r^d(t)$, then the one of $r^f(t)$ and finally the one of $S(t)$. Thus, we first want to focus on the equations
\begin{align}
    dV(t) = \kappa_v (a_v - V(t-)) \ dt +\theta_v d\gamma(t)+ \sigma_v \sqrt{V(t-)} \, d\Gamma_2(t),\quad V(0) = V_0 > 0,  \label{eq:V0}\\
dr^d(t)= \kappa_d (a_d - r^d(t-)) \ dt +\theta_d d\gamma(t)+ \sigma_d \sqrt{r^d(t-)} \, d\Gamma_3(t),\quad r^d(0) = r^d_0 > 0.   \label{eq:rd0}
\end{align}
To do this, we need the following easy (but technical) Lemma which is demonstrated in Appendix \ref{proof:Levy}.

\begin{lemma}\label{lem:Levy}
Let
\begin{equation*}
    p_1(s,u)=\frac{1}{\sqrt{2\pi s}}e^{-\frac{u^2}{2s}}1_{(0,+\infty)}(s), \ (s,u) \in \R^2,
\end{equation*}
For any $j=1,2,3,4$, the process $(\gamma(t),\Gamma_j(t))$ is a pure jump L\'evy process on $\R^2$ with L\'evy measure
\begin{equation}
\nu_1(dsdu)=\nu_1(s,u)dsdu=g_\gamma(s)p_1(s,u)dsdu.
\end{equation}
\end{lemma}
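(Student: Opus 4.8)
The plan is to recognise $(\gamma(t),\Gamma_j(t))$ as the subordination of a two-dimensional Lévy process by the Gamma subordinator $\gamma$, to compute its characteristic function explicitly, and then to read off the generating triplet from the Lévy–Khintchine representation. First I would observe that $Y(u):=(u,W_j(u))$, $u\ge 0$, is a Lévy process on $\R^2$ (a Brownian motion in the second coordinate together with a deterministic unit drift in the first), and that $\gamma$ is a subordinator independent of $\bW$, hence of $Y$. By the standard theory of subordination of Lévy processes (see, e.g., \cite{KimSBM} and the references therein), the time-changed process $Y(\gamma(t))=(\gamma(t),\Gamma_j(t))$ is again a Lévy process on $\R^2$, so it suffices to identify its characteristic exponent. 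Recall also that, since each $W_j$ is a standard one-dimensional Brownian motion, $W_j(v)\sim\mathcal N(0,v)$, i.e. $p_1(v,\cdot)$ is exactly the density of $W_j(v)$.

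Next I would condition on the subordinator: for every $(\lambda,\xi)\in\R^2$,
\[
\E\!\left[e^{i\lambda\gamma(t)+i\xi\Gamma_j(t)}\right]
=\E\!\left[e^{i\lambda\gamma(t)}\,\E\!\left[e^{i\xi W_j(v)}\right]\Big|_{v=\gamma(t)}\right]
=\E\!\left[e^{\gamma(t)\left(i\lambda-\frac{\xi^2}{2}\right)}\right].
\]
The Laplace exponent $\phi(w):=\int_0^{+\infty}(1-e^{-ws})g_\gamma(s)\,ds$ of the Gamma subordinator is well defined and analytic for $\Re w>0$, and continuous up to $\Re w=0$ (because $|1-e^{-ws}|\le |w|s$ there and $\int_0^{+\infty}(s\wedge 1)\,g_\gamma(s)\,ds<+\infty$), and $\E[e^{-w\gamma(t)}]=e^{-t\phi(w)}$ extends from $w>0$ to $\Re w\ge 0$ by analytic continuation. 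Applying this with $w=-i\lambda+\frac{\xi^2}{2}$ (so $\Re w=\frac{\xi^2}{2}\ge 0$) gives
\[
\E\!\left[e^{i\lambda\gamma(t)+i\xi\Gamma_j(t)}\right]
=\exp\!\left(t\int_0^{+\infty}\left(e^{i\lambda s-\frac{\xi^2 s}{2}}-1\right)g_\gamma(s)\,ds\right).
\]

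Finally I would rewrite the exponent as an integral over $\R^2$. Since $\int_\R e^{i\xi u}p_1(s,u)\,du=e^{-\xi^2 s/2}$ for every $s>0$ (the Gaussian Fourier transform), Fubini's theorem yields
\[
\int_0^{+\infty}\left(e^{i\lambda s-\frac{\xi^2 s}{2}}-1\right)g_\gamma(s)\,ds
=\int_{\R^2}\left(e^{i\langle(\lambda,\xi),(s,u)\rangle}-1\right)\nu_1(ds\,du),
\]
and its use is legitimate because $\int_{\R^2}\bigl(1\wedge|(s,u)|\bigr)\nu_1(ds\,du)<+\infty$: integrating in $u$ first, $\int_\R\bigl(1\wedge\sqrt{s^2+u^2}\bigr)p_1(s,u)\,du\le s+\sqrt{2s/\pi}$, which is integrable against $g_\gamma(s)\,ds=\frac{\alpha}{s}e^{-\beta s}\,ds$ on $(0,+\infty)$. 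Hence the characteristic exponent of $(\gamma(t),\Gamma_j(t))$ equals $\int_{\R^2}(e^{i\langle z,x\rangle}-1)\nu_1(dx)$, with vanishing Gaussian part and vanishing drift; since the same estimate also gives $\int_{\R^2}(1\wedge|x|^2)\nu_1(dx)<+\infty$, so that $\nu_1$ is a genuine Lévy measure, the uniqueness in the Lévy–Khintchine formula identifies $(\gamma(t),\Gamma_j(t))$ as a pure-jump Lévy process with Lévy measure $\nu_1$.

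The only real obstacle is the analytic and measure-theoretic bookkeeping: carefully justifying the analytic continuation of $w\mapsto\E[e^{-w\gamma(t)}]$ to the imaginary axis, the Fubini interchange, and the elementary but slightly delicate Gaussian tail estimate used to check the Lévy-measure integrability of $\nu_1$ (which simultaneously yields that the joint process has finite variation, consistent with the bounded-variation property of $\Gamma_j$ noted earlier). Everything else is routine once it is known that $(\gamma,\Gamma_j)$ is a Lévy process and that $W_j(v)\sim\mathcal N(0,v)$.
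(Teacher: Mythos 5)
Your argument is correct, but it identifies the L\'evy measure by a different mechanism than the paper. The paper's proof leans entirely on Phillips' subordination theorem as stated in Sato (Theorem 30.1): that theorem is quoted both for the fact that $(\gamma(t),\Gamma_j(t))=X(\gamma(t))$ with $X(t)=(t,W_j(t))$ is again a L\'evy process \emph{and} for the explicit formula $\nu_1(A)=\int_0^{+\infty}\bQ((s,W_j(s))\in A)\,g_\gamma(s)\,ds$ for the L\'evy measure of the subordinated process (the extra term involving the subordinator's drift vanishes since the Gamma subordinator is driftless); the only computation left is to evaluate this probability on product sets $A_1\times A_2$ using the Gaussian density $p_1(s,\cdot)$ of $W_j(s)$. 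You instead take only the L\'evy property from subordination theory and re-derive the jump measure by hand: conditioning on $\gamma(t)$ to get $\E[e^{\gamma(t)(i\lambda-\xi^2/2)}]$, extending the Laplace exponent of the Gamma subordinator to $\Re w\ge 0$ by analytic continuation, converting the exponent into an integral against $\nu_1$ via the Gaussian Fourier transform and Fubini, and invoking uniqueness in the L\'evy--Khintchine representation to conclude that the triplet is $(0,0,\nu_1)$ in the uncompensated (finite-variation) form. Your route is longer and requires the continuation and integrability bookkeeping you flag, but it is more self-contained (it does not need the measure formula from Sato's theorem) and it explicitly verifies that $\int_{\R^2}(1\wedge|x|)\,\nu_1(dx)<+\infty$, a fact the paper uses implicitly; the paper's route buys brevity by quoting the stronger off-the-shelf result. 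Both proofs are sound.
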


With this property in mind, we can prove the following existence and uniqueness theorem by combining the arguments of \cite[Theorems 2.4 and 2.8]{Xi} and \cite[Theorem 3.1 and 3.2]{Fu}.

\begin{theorem} \label{thm:exunV0rd0}
Equations \eqref{eq:V0} and \eqref{eq:rd0} admit pathwise unique non-negative strong solutions respectively up to the Markov (or stopping) times $\tau^V_0$ and $\tau^d_0$.
\end{theorem}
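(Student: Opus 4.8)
The plan is to read \eqref{eq:V0} and \eqref{eq:rd0} as one-dimensional stochastic differential equations driven by the Poisson random measure $N$ of the planar L\'evy process $(\gamma(t),\Gamma_j(t))$ produced by Lemma~\ref{lem:Levy}, so that $N$ lives on $[0,\infty)\times\R^+\times\R$ with compensator $dr\,\nu_1(ds\,du)$, and then to check that the resulting coefficients satisfy the hypotheses of the existence and uniqueness results of \cite[Theorems 2.4 and 2.8]{Xi} and \cite[Theorems 3.1 and 3.2]{Fu}. Since \eqref{eq:rd0} has exactly the form of \eqref{eq:V0} after relabelling $(\kappa_v,a_v,\theta_v,\sigma_v)$ and replacing $\Gamma_2$ by $\Gamma_3$, it is enough to treat \eqref{eq:V0}. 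Extending the square root off $\R^+$ by $\sqrt{v}:=\sqrt{v\vee 0}$, equation \eqref{eq:V0} becomes
\[
V(t)=V_0+\int_0^t\kappa_v\bigl(a_v-V(r-)\bigr)\,dr+\int_{(0,t]\times\R^+\times\R}\bigl(\theta_v s+\sigma_v\sqrt{V(r-)\vee 0}\;u\bigr)\,N(dr,ds,du),
\]
so the drift $b(v)=\kappa_v(a_v-v)$ is affine (hence globally Lipschitz of linear growth) and the jump coefficient is $F(v,s,u)=\theta_v s+\sigma_v\sqrt{v\vee 0}\,u$. The first point to record is that, integrating the Gaussian factor $p_1(s,\cdot)$ first, $\int_{\R^+\times\R}\bigl(|s|+|u|+s^2+u^2\bigr)\nu_1(ds\,du)=\int_0^\infty\bigl(s+\sqrt{2s/\pi}+s^2+s\bigr)g_\gamma(s)\,ds<\infty$, the convergence near $0$ coming from $g_\gamma(s)\sim\alpha/s$ against the factors $\sqrt s$ and $s$, and near $+\infty$ from the exponential decay of $g_\gamma$; although $\nu_1(\R^+\times\R)=+\infty$. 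Consequently $\int|F(v,s,u)|\,\nu_1(ds\,du)+\int|F(v,s,u)|^2\,\nu_1(ds\,du)\le C(1+|v|)$, so \eqref{eq:V0} is driven by a square-integrable (in fact finite-variation) L\'evy noise, which is precisely what makes $b,F$ admissible for the cited theorems.

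For existence I would localise the square root. Fix $n\in\N$ and pick $\psi_n\in C^\infty(\R)$ non-decreasing, bounded and globally Lipschitz with $\psi_n(v)=\sqrt v$ on $[1/n,n]$ and $\psi_n(v)=0$ for $v\le 0$. Replacing $\sqrt{v\vee 0}$ by $\psi_n(v)$ turns \eqref{eq:V0} into an equation with globally Lipschitz coefficients of linear growth driven by a square-integrable L\'evy noise, which by the classical theory for jump SDEs (\cite[Theorem 2.4]{Xi}) has a pathwise unique strong solution $V^{(n)}$. Set $\tau_n=\inf\{r\ge 0:\ V^{(n)}(r)\notin(1/n,n)\ \text{or}\ V^{(n)}(r-)\notin(1/n,n)\}$. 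On $[0,\tau_n]$ the process $V^{(n)}$ stays in $[1/n,n]$, where $\psi_n=\psi_{n+1}=\sqrt{\cdot}$, so the usual localisation-and-patching argument gives $V^{(n+1)}=V^{(n)}$ on $[0,\tau_n]$, hence $\tau_n\uparrow\tau^V_0$ for a stopping time $\tau^V_0$; then $V(r):=V^{(n)}(r)$ for $r<\tau_n$ is a well-defined strong solution of \eqref{eq:V0} on $[0,\tau^V_0)$, positive there. A standard moment estimate (using that $b$ has mean-reverting sign for large $v$, that $\E[\gamma(r)]<\infty$, and that $\int_0^{\,\cdot}\sqrt{V(\rho-)}\,d\Gamma_2(\rho)$ is a local martingale) shows that $V$ does not explode, so $\tau^V_0$ may be identified with the first exit time of $V$ from $\R^+$; it is in particular a Markov time for the associated strong Markov family.

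Pathwise uniqueness up to $\tau^V_0$ is the Yamada--Watanabe argument adapted to jumps, as in \cite[Theorem 2.8]{Xi} and \cite[Theorems 3.1--3.2]{Fu}. Given two solutions $V,V'$ of \eqref{eq:V0} with $V(0)=V'(0)=V_0$ on a common basis driven by the same $N$, put $D=V-V'$, a finite-variation process. Two simplifications occur: the drift $\theta_v s$ cancels in $\Delta D$, so that $\Delta D(r)=\sigma_v\bigl(\sqrt{V(r-)\vee 0}-\sqrt{V'(r-)\vee 0}\bigr)\Delta\Gamma_2(r)$; and, since $\int u\,p_1(s,u)\,du=0$, the first-order term of the jump expansion vanishes upon integration against $\nu_1$. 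Applying It\^o's formula to $\phi_k(D(r\wedge\tau_N))$ for the standard Yamada--Watanabe functions $\phi_k\uparrow|\cdot|$ (with $0\le\phi_k'\le 1$ and $\phi_k''$ supported on a shrinking neighbourhood of $0$ and satisfying $|x|\phi_k''(x)\le 2/k$), where $\tau_N$ localises $V,V'$ into $[1/N,N]$, the drift part contributes at most $\kappa_v\int_0^r\E|D(\rho\wedge\tau_N)|\,d\rho$, while the remaining jump contribution is the integral against $\nu_1$ of the second-order error term, which by $\bigl(\sqrt{v\vee 0}-\sqrt{v'\vee 0}\bigr)^2\le|v-v'|$, the control on $\phi_k''$, and $\int u^2\,\nu_1(ds\,du)<\infty$ is $O(1/k)$ and vanishes as $k\to\infty$. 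Gronwall's inequality then yields $\E|D(r\wedge\tau_N)|=0$ for all $r$, and $N\to\infty$ gives $V\equiv V'$ on $[0,\tau^V_0)$. Running the identical scheme on \eqref{eq:rd0} produces the solution up to $\tau^d_0$.

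The step I expect to be the main obstacle is this uniqueness argument in the presence of jumps: one must run the Yamada--Watanabe scheme so that the second-order error in the expansion of $\phi_k(D)$ is genuinely absorbed by the H\"older-$\tfrac12$ modulus of the coefficient difference integrated against the finite-mass-in-$u$ part of $\nu_1$, and one must be scrupulous that the whole analysis is carried out only up to $\tau^V_0$, i.e.\ while $V(r-)>0$, so that $\sqrt{\cdot\vee 0}$ genuinely coincides with $\sqrt{\cdot}$ and the equation being solved is \eqref{eq:V0} rather than its truncation (a downward jump of $\Gamma_2$ can in principle carry $V$ out of $\R^+$, which is exactly why only a local solution can be claimed). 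The subsidiary but fiddly point is the moment computation for $\nu_1$ indicated in the first paragraph, on which everything above rests.
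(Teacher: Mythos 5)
Your proposal is correct in substance, but for the existence part it takes a genuinely different route from the paper. You cut the square root off both near $0$ and at level $n$ (so that the localized coefficients become globally Lipschitz with $\int|u|^2\nu_1(ds\,du)<\infty$), invoke the classical strong existence/uniqueness theory for Lipschitz jump SDEs, and patch the solutions up to the exit times from $\left[\tfrac1n,n\right]$; this is essentially the technique the paper reserves for the $\rho_{sf}<0$ case of Theorem \ref{thm:exunrf0} and for the Euler scheme. The paper instead extends the diffusion coefficient to all of $\R$ by $\sqrt{x}\,1_{\R^+}(x)$, localizes only in $|x|\le n$ (keeping the H\"older-$\tfrac12$ coefficient), obtains weak existence of the localized auxiliary equation via the martingale problem (\cite{Stroock}, \cite{Kurtz}), proves pathwise uniqueness by the same Yamada--Watanabe-with-jumps estimate you sketch (following \cite{Xi}, \cite{Fu}), upgrades to strong existence via \cite{Barczy}, shows non-explosion through the linear-growth condition $2xb_V(x)+\int g_V^2\,\nu_1\le C(1+x^2)$, and only then stops the globally defined auxiliary solution at $\tau^V_0=\inf\{t:V(t)<0\}$. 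What the paper's heavier machinery buys is precisely this stopping time: as its Remark after Theorem \ref{thm:stoccont} stresses, the first-negativity time of the auxiliary solution can be strictly larger than the limit of exit times from $\left[\tfrac1n,n\right]$ that your construction produces (your solution stops as soon as the path approaches $0$, the paper's only when it actually goes below $0$). Since the statement does not pin down $\tau^V_0$, your argument does prove the theorem as written, only with a possibly smaller maximal time; your moment computations for $\nu_1$ and your uniqueness argument (the functions $\phi_k$, localization into $\left[\tfrac1N,N\right]$, $(\sqrt x-\sqrt y)^2\le|x-y|$, finiteness of $\int u^2\nu_1$) coincide with the paper's, including the delicate second-order Taylor estimate that both you and the paper ultimately delegate to the framework of \cite{Fu} and \cite{Xi}.
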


The proof of Theorem \ref{thm:exunV0rd0} is provided in Appendix \ref{proof:exunV0rd0}.

Now let us handle the Equation
\begin{align}\label{eq:rf0}
\begin{split}
& dr^f(t)= \left(  \kappa_f (a_f - r^f(t-)) -  \sigma_f\rho_{sf} \sqrt{V(t-)\, r^f(t-)}\right)  dt+\theta_f d\gamma(t)+ \sigma_f  \sqrt{r^f(t-)} \, d\Gamma_4(t),\\
&\qquad\qquad\qquad\qquad\qquad\qquad\qquad\qquad\qquad\qquad\qquad r^f(t_0) = r^f_0 > 0 ,  
\end{split}
\end{align}
It will be convenient to consider it as coupled with \eqref{eq:V0}. However, the processes $\Gamma_2(t)$ and $\Gamma_4(t)$ that constitute part of the noise are correlated, not only since they are obtained from their respective parent processes $W_2(t)$ and $W_4(t)$ by means of the same time-change $\gamma(t)$, but also since the parent processes themselves are correlated, with instantaneous correlation matrix
\begin{equation*}
    \Sigma_{\bW}^{(2,4)}=\begin{pmatrix} 1 & \rho_{vf} \\ \rho_{vf} & 1 \end{pmatrix}.
\end{equation*}
Let us consider a standard $2$-dimensional Brownian motion $\bwW(t)=(\wW_1(t),\wW_2(t))$ and let $R$ be the Cholesky factor (see \cite[Corollary 7.2.9]{Horn}). In particular, in this case, one can evaluate $R$ explicitly as
\begin{equation*}
    R=\begin{pmatrix} 1 & 0 \\
    \rho_{vf} & \sqrt{1-\rho^2_{vf}}
    \end{pmatrix}.
\end{equation*}
It is well known (see, for instance, \cite[Section XI.2]{Asmussen}) that, denoting $\bW_{(2,4)}(t)=(W_2(t),W_4(t))$, it holds $\bW_{(2,4)}^T(t)\overset{d}{=} R\bwW^T(t)$, where with $M^T$ we mean the transposed of the matrix $M$. This means that $R^{-1}\bW_{(2,4)}^T(t)$ is a standard $2$-dimensional Brownian motion ad we can directly set $\bwW^T(t)=R^{-1}\bW_{(2,4)}^T(t)$ without loss of generality. Defining $\wGamma(t)=\bwW(\gamma(t))$, we have $\Gamma_{(2,4)}(t):=(\Gamma_2(t),\Gamma_4(t))=R\wGamma(t)$. With this in mind, we can rewrite Equations \eqref{eq:V0} and \eqref{eq:rf0} as
\begin{align}
    dV(t)&=\kappa_v(a_v-V(t-))dt+\theta_v d\gamma(t)+\sigma_v \sqrt{V(t-)}d\wGamma_1(t), & V_0>0 \label{eq:V0R}\\
    \begin{split}
    dr^f(t)&=(\kappa_f(a_f-r^f(t-))-\sigma_f \rho_{sf} \sqrt{V(t-)r^f(t-)})dt+\theta_f d\gamma(t)\\&+\sigma_f \rho_{vf}\sqrt{r^f(t-)}d\wGamma_1(t)+\sigma_f \sqrt{1-\rho^2_{vf}}\sqrt{r^f(t-)}d\wGamma_2(t),
    \end{split} & r^f_0>0. \label{eq:rf0R}
\end{align}

This time we need a slightly different technical Lemma, that is proved in the same way as Lemma \ref{lem:Levy}.
\begin{lemma}\label{lem:Levy2}
Let
\begin{align*}
    p_2(s,u)&=\frac{1}{2\pi s}e^{-\frac{|u|^2}{2s}}1_{(0,+\infty)}(s), \  s \in \R, \ u \in \R^2,\\
\end{align*}
The process $(\gamma(t),\wGamma^{(1)}(t),\wGamma^{(2)}(t))$ is a pure jump L\'evy process on $\R^3$ with L\'evy measure
\begin{equation}
\nu_2(dsdu)=\nu_2(s,u)dsdu=g_\gamma(s)p_2(s,u)dsdu.
\end{equation}
\end{lemma}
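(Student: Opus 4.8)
The plan is to reproduce the proof of Lemma~\ref{lem:Levy} almost word for word, the only change being that the scalar Brownian motion there is replaced by the planar Brownian motion $\bwW=(\wW_1,\wW_2)$, so that the one-dimensional heat kernel $p_1$ becomes the two-dimensional one $p_2$. First I would record that, by construction, $\bwW$ is a \emph{standard} two-dimensional Brownian motion (it is $R^{-1}$ applied to $\bW_{(2,4)}$ and $R$ is invertible) and that it is independent of the Gamma subordinator $\gamma$, because $\bW$ is. Consequently $(s,\bwW(s))_{s\ge 0}$ is a L\'evy process on $\R^3$ with no jump part — a deterministic unit drift in the first coordinate and two independent standard Brownian components in the other two — and $(\gamma(t),\wGamma(t))=(\gamma(t),\bwW(\gamma(t)))$ is its subordination by the independent subordinator $\gamma$. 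As in Lemma~\ref{lem:Levy}, the subordination of a L\'evy process by an independent subordinator is again a L\'evy process, so $(\gamma(t),\wGamma^{(1)}(t),\wGamma^{(2)}(t))$ is a L\'evy process on $\R^3$; it then remains only to compute its characteristic exponent and read off the triplet.

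For this I would condition on $\gamma(t)$: using $\E[e^{i\langle\eta,\bwW(s)\rangle}]=e^{-\frac12|\eta|^2 s}$ for $\eta\in\R^2$ and the independence of $\bwW$ and $\gamma$, one gets for $\xi_0\in\R$, $\eta\in\R^2$
\[\E\!\left[e^{i\xi_0\gamma(t)+i\langle\eta,\wGamma(t)\rangle}\right]=\E\!\left[e^{(i\xi_0-\frac12|\eta|^2)\gamma(t)}\right]=e^{-t\phi(\frac12|\eta|^2-i\xi_0)},\]
where $\phi(\lambda)=\int_0^{+\infty}\bigl(1-e^{-\lambda s}\bigr)g_\gamma(s)\,ds$ is the Laplace exponent of the driftless subordinator $\gamma$ (with L\'evy measure $g_\gamma$, as recalled in Section~\ref{Sec:Heston-CIR Levy model for FX market}). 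Inserting the Gaussian identity $e^{-\frac12|\eta|^2 s}=\int_{\R^2}e^{i\langle\eta,u\rangle}p_2(s,u)\,du$ and using $\int_{\R^2}p_2(s,u)\,du=1$, an application of Fubini's theorem yields
\[\phi\!\left(\tfrac12|\eta|^2-i\xi_0\right)=\int_0^{+\infty}\!\!\int_{\R^2}\bigl(1-e^{i\xi_0 s+i\langle\eta,u\rangle}\bigr)\,g_\gamma(s)p_2(s,u)\,du\,ds,\]
that is, the characteristic exponent of $(\gamma(t),\wGamma^{(1)}(t),\wGamma^{(2)}(t))$ is the L\'evy--Khintchine exponent of a finite-variation pure-jump process with no Gaussian part, no compensator in the jump integral, and L\'evy measure exactly $\nu_2(ds\,du)=g_\gamma(s)p_2(s,u)\,ds\,du$. (One could equally read off the same triplet from the general formula for the generating triplet of a L\'evy process subordinated by an independent subordinator.)

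The last step is to check that this is legitimate, i.e.\ that $\nu_2$ really is the L\'evy measure of a pure jump process: $\nu_2$ is carried by $\{s>0\}$, and $\int_{\R^3}(1\wedge|x|)\,\nu_2(dx)<+\infty$ since $\int_{\R^2}(1\wedge|(s,u)|)\,p_2(s,u)\,du=O(\sqrt s)$ as $s\downarrow 0$ while $g_\gamma(s)\sim\alpha/s$ there and $g_\gamma$ is integrable away from the origin — the very same estimate used for $\nu_1$ in Lemma~\ref{lem:Levy}. Hence the drift in the finite-variation representation vanishes, which completes the identification. I do not expect any genuine obstacle here: the only points needing (routine) care, exactly as in the one-dimensional case, are the Fubini exchange and the finiteness of $\int(1\wedge|x|)\nu_2(dx)$ near $s=0$, where the $1/s$ singularity of $g_\gamma$ is tamed by the $O(\sqrt s)$ mass of the Gaussian marginal, together with the verification that the exponent above hides neither a Gaussian component nor a drift, so that ``pure jump'' is justified.
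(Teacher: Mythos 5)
Your proof is correct, and its skeleton coincides with the paper's: the paper proves Lemma \ref{lem:Levy2} by repeating verbatim the argument of Lemma \ref{lem:Levy}, i.e.\ it views $(\gamma(t),\wGamma(t))$ as the subordination of the $\R^3$-valued L\'evy process $(s,\bwW(s))$ by the independent Gamma subordinator and invokes Phillips' subordination theorem (\cite[Theorem 30.1]{Sato}) to get the L\'evy property. Where you diverge is in identifying the L\'evy measure: the paper reads it off directly from the same theorem, which gives $\nu_2(A)=\int_0^{+\infty}\bQ\bigl((s,\bwW(s))\in A\bigr)g_\gamma(s)\,ds$, and then evaluates this on product Borel sets $A_1\times A_2$ to obtain the density $g_\gamma(s)p_2(s,u)$; you instead recompute everything by hand, conditioning on $\gamma(t)$ to get $\E[e^{i\xi_0\gamma(t)+i\langle\eta,\wGamma(t)\rangle}]=e^{-t\phi(\frac12|\eta|^2-i\xi_0)}$, inserting the Gaussian identity for $e^{-\frac12|\eta|^2 s}$, and appealing to uniqueness of the L\'evy--Khintchine representation. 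Your Fubini exchange and the bound $\int_{\R^2}(1\wedge|(s,u)|)p_2(s,u)\,du=O(\sqrt{s})$ against the $\alpha/s$ singularity of $g_\gamma$ are exactly what is needed, so there is no gap. The trade-off: the paper's route is shorter and bypasses any characteristic-function computation (the measure formula is quoted, not derived), whereas yours is more self-contained, in effect reproving the relevant special case of the subordination theorem, and it makes explicit the finite-variation, no-Gaussian-part, zero-drift structure that justifies calling the process ``pure jump'' --- a point the paper leaves implicit in its citation.
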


Now we are ready to state the following existence and uniqueness theorem.
\begin{theorem}\label{thm:exunrf0}
Equation \eqref{eq:rf0} admits a pathwise unique non-negative strong solution up to a Markov time $\tau_0^f$.
\end{theorem}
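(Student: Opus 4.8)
The plan is to strip \eqref{eq:rf0} down to a one–dimensional stochastic equation with jumps for $r^f$ in which the variance enters only as a coefficient, and then run the same truncation-plus-Yamada--Watanabe machinery that underlies Theorem \ref{thm:exunV0rd0}. By Theorem \ref{thm:exunV0rd0} the equation \eqref{eq:V0R} admits a pathwise unique non-negative strong solution $V$ up to $\tau_0^V$; I would fix this $V$ once and for all and view \eqref{eq:rf0R} as an equation for $r^f$ driven by the pure-jump L\'evy process $(\gamma,\wGamma_1,\wGamma_2)$ on $\R^3$ whose L\'evy measure $\nu_2(ds\,du)=g_\gamma(s)p_2(s,u)\,ds\,du$ is identified in Lemma \ref{lem:Levy2}. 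The coefficient of $d\gamma$ is the constant $\theta_f\ge 0$, so the subordinator contributes only non-negative jumps; the coefficients of $d\wGamma_1,d\wGamma_2$ are $\sigma_f\rho_{vf}\sqrt{r^f(t-)}$ and $\sigma_f\sqrt{1-\rho_{vf}^2}\sqrt{r^f(t-)}$; and the Lebesgue drift is $\kappa_f(a_f-r^f(t-))-\sigma_f\rho_{sf}\sqrt{V(t-)\,r^f(t-)}$. The only non-Lipschitz features are the square-root terms, and in the coupling term $\sqrt{V(t-)\,r^f(t-)}$ the factor $\sqrt{V(t-)}$ is, on each event $\{t<\tau_0^V\wedge\tau_N\}$ with $\tau_N$ the first exit of $(V,r^f)$ from $(1/N,N)^2$, bounded by $\sqrt N$.

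For existence I would localize exactly as in \cite[Theorems 2.4 and 2.8]{Xi} and \cite[Theorems 3.1 and 3.2]{Fu}: replace $\sqrt{x}$ by a globally Lipschitz function agreeing with it on $[1/N,N]$ and $V(t-)$ by $V(t-)\wedge N$, so that the resulting jump SDE has Lipschitz coefficients of linear growth; solve it by the standard fixed-point (or Euler) argument, using Lemma \ref{lem:Levy2} to write the compensator of the jump part; and paste the solutions over $N$, obtaining a strong solution on $[0,\tau_0^f)$ with $\tau_0^f:=\lim_{N\to\infty}\tau_N$, a Markov time since $V$, $r^f$ and the driving noise are all adapted. Non-negativity of $r^f$ on $[0,\tau_0^f)$ I would get by the boundary comparison of \cite[Section 3]{Fu}: at $r^f=0$ the drift equals $\kappa_f a_f>0$ (the coupling term vanishes like $\sqrt{r^f}$), both diffusion coefficients vanish, and the $\gamma$-jumps are non-negative, so the process cannot cross $0$ downward.

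For pathwise uniqueness I would take two non-negative strong solutions $r^f,\bar r^f$ driven by the same noise and the same $V$ (legitimate by Theorem \ref{thm:exunV0rd0}) and apply the Yamada--Watanabe sequence $\phi_n$ to $Z=r^f-\bar r^f$. Since $(\gamma,\wGamma_1,\wGamma_2)$ is pure jump, $Z$ carries no continuous local-martingale part and the It\^o estimate splits into a finite-variation drift piece and a jump piece. In the jump piece the $\theta_f\,d\gamma$ terms cancel, and after integrating the Gaussian coordinates of $\nu_2$ against the test increment one is left with an integral against $g_\gamma(s)\,p_1(s,v)\,ds\,dv$ of precisely the type handled in \cite{Fu}; the only input beyond \cite{Fu} is $\int_0^{+\infty}s\,g_\gamma(ds)=\alpha/\beta<+\infty$, which gives finite second moments for these subordinated-Brownian jumps, so that with $|\sqrt{r^f}-\sqrt{\bar r^f}|^2\le |Z|$ and the choice $\phi_n''(z)\,z\le 2/n$ the jump contribution tends to $0$ as $n\to\infty$. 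The $\kappa_f(a_f-r^f)$ part of the drift is Lipschitz and harmless; the coupling term $-\sigma_f\rho_{sf}\sqrt{V(t-)}(\sqrt{r^f}-\sqrt{\bar r^f})$ is dissipative when $\rho_{sf}\ge 0$ (it has the sign of $-Z$ and may simply be dropped) and, for general $\rho_{sf}$, is controlled on each localizing interval, where $\sqrt{\cdot}$ is Lipschitz and $\sqrt{V(t-)}$ is bounded. Letting $n\to\infty$ and then removing the localization yields $\E\big[|Z_{t\wedge\tau_0^f}|\big]=0$.

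The main obstacle is the coupling drift $\sigma_f\rho_{sf}\sqrt{V(t-)\,r^f(t-)}$: it is the one term absent from the CIR/CBI templates of \cite{Xi,Fu}, and it forces one to drag the auxiliary process $V$ through both the existence scheme and the uniqueness estimate and to exploit either its sign (for $\rho_{sf}\ge0$) or the local boundedness of $\sqrt{V(t-)}$ together with the localization away from $r^f=0$ (in general). The secondary, bookkeeping-heavy point is verifying that the jump part of the Yamada--Watanabe estimate, which in \cite{Fu} is driven by a one-dimensional subordinator, reduces to the same scalar form here after the rotation $\Gamma_{(2,4)}=R\wGamma$ and the Gaussian marginalization encoded in Lemma \ref{lem:Levy2}.
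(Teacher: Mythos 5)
Your proposal follows essentially the same route as the paper: recast \eqref{eq:rf0} (coupled with \eqref{eq:V0R}) as a jump SDE via the L\'evy--It\^o decomposition of Lemma \ref{lem:Levy2}, reuse the $V$ from Theorem \ref{thm:exunV0rd0}, exploit the dissipative sign of the coupling drift when $\rho_{sf}\ge 0$, and for general (in particular negative) $\rho_{sf}$ localize $r^f$ away from both $0$ and $\infty$ so the coefficients become Lipschitz, pasting the localized solutions to define $\tau_0^f$; the moment input $\int_0^\infty s\,g_\gamma(ds)=\alpha/\beta<\infty$ is exactly what the paper uses to control the Yamada--Watanabe jump term. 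The only differences are cosmetic (the paper treats $(V,r^f)$ as a two-dimensional system with deterministic coefficients and uses classical Lipschitz uniqueness in the localized case, and its $\rho_{sf}>0$ construction yields a possibly larger lifetime), so the core argument is sound.

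One side-claim should be dropped: the assertion that $r^f$ ``cannot cross $0$ downward'' because the drift at $0$ is $\kappa_f a_f>0$ and the diffusion coefficients vanish. The jumps of $r^f$ include the term $\sigma_f\sqrt{r^f(t-)}\bigl(\rho_{vf}\Delta\wGamma_1+\sqrt{1-\rho_{vf}^2}\,\Delta\wGamma_2\bigr)$, and the subordinated Gaussian jump sizes are unbounded below, so from a small positive level the process can jump strictly below zero with positive probability; a Fu--Li type boundary comparison does not apply here. This does not damage the theorem as stated, because non-negativity up to $\tau_0^f$ is already automatic from your localization (the solution lies in $[1/N,N]$ before each $\tau_N$), which is precisely how the paper handles it---the solution is only claimed up to the Markov time, not globally non-negative.
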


The proof of Theorem \ref{thm:exunrf0} is provided in Appendix \ref{proof:exunrf0}.

\begin{remark}
It is clear that the localization argument adopted in the case $\rho_{sf}<0$ is also valid in the case $\rho_{sf}\ge 0$ and for Equations \eqref{eq:V0} and \eqref{eq:rd0}.
Let us define a somewhat natural extension of the processes $V(t)$, $r^d(t)$ and $r^f(t)$ when they turn negative.
Up to now, we were not able to prove that a similar extension holds as $\rho_{sf}<0$.\\
Let us also recall that, since we are using \cite[Theorem 2.2]{Stroock} or \cite[Theorem 6.2.3]{Applebaum}, we have the processes $V(t)$, $r^d(t)$ and $r^f(t)$ are almost surely c\'adl\'ag. Without loss of generality, we can assume that for any $\omega \in \Omega$ the paths $V(\cdot,\omega)$, $r^d(\cdot,\omega)$ and $r^f(\cdot,\omega)$ are c\'adl\'ag respectively up to $\tau^V_0$, $\tau^d_0$ and $\tau^f_0$.
\end{remark}
Now we are finally ready to prove that Equation
\begin{align} \label{eq:S0}
    dS(t)&=S(t-)(r^d(t-)-r^f(t-))dt+S(t-)(\theta_sd\gamma(t)+\sqrt{V(t-)}d\Gamma_1(t)), \\
    & \text{with} \quad S(0)=S_0>0 \nonumber
\end{align}
admits a pathwise unique strong solution up to a Markov time.
\begin{theorem}
Equation \eqref{eq:S0} admits a pathwise unique non-negative c\'adl\'ag strong solution up to a Markov time $\tau_0^S$.
\end{theorem}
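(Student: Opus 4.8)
The plan is to exploit the fact that, in contrast with \eqref{eq:V0}, \eqref{eq:rd0} and \eqref{eq:rf0}, Equation \eqref{eq:S0} is \emph{linear} in the unknown $S$: its coefficient $s\mapsto s$ is globally Lipschitz, so $S$ can be written down explicitly as a Dol\'eans--Dade stochastic exponential and no Yamada--Watanabe-type argument is needed for uniqueness. The only real work is bookkeeping: identifying the driving semimartingale, localizing so that a standard Lipschitz-SDE-with-jumps result applies, and pinning down the Markov time that makes the solution non-negative.

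First I would set $\tau_*:=\tau^V_0\wedge\tau^d_0\wedge\tau^f_0$ and, invoking Theorems \ref{thm:exunV0rd0} and \ref{thm:exunrf0}, work on $[0,\tau_*)$ with the non-negative c\`adl\`ag versions of $V$, $r^d$, $r^f$ already constructed there. On this interval
\[
Z(t):=\int_0^t\bigl(r^d(s-)-r^f(s-)\bigr)\,ds+\theta_s\,\gamma(t)+\int_0^t\sqrt{V(s-)}\,d\Gamma_1(s)
\]
is a well-defined c\`adl\`ag semimartingale --- since $\gamma$ and $\Gamma_1$ have paths of bounded variation and $\sqrt{V(\cdot-)}$ is c\`adl\`ag, adapted and locally bounded, $Z$ is in fact a finite-variation process, in particular with no continuous martingale part --- and \eqref{eq:S0} reads $dS(t)=S(t-)\,dZ(t)$, $S(0)=S_0$. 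Localizing along a sequence $\sigma_n\uparrow\tau_*$ along which $V,r^d,r^f$ (hence the characteristics of $Z$) are bounded, one applies on each $[0,\sigma_n]$ a standard existence and uniqueness theorem for Lipschitz SDEs with jumps (e.g.\ \cite[Theorem 6.2.3]{Applebaum}) to get a pathwise unique c\`adl\`ag strong solution $S^{(n)}$; these agree for different $n$ by uniqueness, and patching them yields the unique strong solution $S$ on $[0,\tau_*)$. Equivalently $S=S_0\,\mathcal{E}(Z)$, with the absolutely convergent product representation
\[
S(t)=S_0\,\exp\!\left(\int_0^t\bigl(r^d(s-)-r^f(s-)\bigr)ds\right)\prod_{0<s\le t}\bigl(1+\theta_s\,\Delta\gamma(s)+\sqrt{V(s-)}\,\Delta\Gamma_1(s)\bigr),
\]
which exhibits the solution explicitly; absolute convergence of the product follows from $\sum_{0<s\le t}|\Delta Z(s)|\le\theta_s\,\gamma(t)+\bigl(\sup_{s\le t}\sqrt{V(s-)}\bigr)\sum_{0<s\le t}|\Delta\Gamma_1(s)|<\infty$, the last sum being finite because $\Gamma_1$ has bounded variation.

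Finally, for non-negativity I would define $\tau^S_0:=\tau_*\wedge\inf\{t\ge 0:\ S(t)\le 0\}$, which is a Markov time by the d\'ebut theorem and is a.s.\ strictly positive since $S$ is c\`adl\`ag with $S(0)=S_0>0$. On $[0,\tau^S_0)$ every factor $1+\Delta Z(s)$ in the product above is positive, so $S>0$ there; hence the restriction of $S$ to $[0,\tau^S_0)$ is the asserted non-negative c\`adl\`ag strong solution, pathwise uniqueness being inherited from the uniqueness already established on $[0,\tau_*)$. The only genuinely delicate point is exactly this last one: the jump $\Delta Z(s)=\theta_s\,\Delta\gamma(s)+\sqrt{V(s-)}\,\Delta\Gamma_1(s)$ has a conditionally Gaussian component and is therefore \emph{not} bounded below by $-1$, so $S$ cannot be kept positive indefinitely and the stopping time $\tau^S_0$ must be introduced precisely to cut the path before (or at) the moment it would enter $(-\infty,0]$; one also has to check carefully that $\tau^S_0$ is adapted (a stopping time) and that the solution $S$ and the time $\tau^S_0$ do not depend on the choice of the localizing sequence $\{\sigma_n\}$. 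Everything else is routine once the linearity of \eqref{eq:S0} is used.
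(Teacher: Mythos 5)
Your proposal is correct, but it follows a genuinely different route from the paper. The paper also reduces \eqref{eq:S0} to the linear pathwise equation $dS(t,\omega)=S(t-,\omega)\,dX(t,\omega)$ driven by the same finite-variation process you call $Z$, with $\tau_0^S=\tau_0^V\wedge\tau_0^d\wedge\tau_0^f$; but instead of writing the solution down, it treats this as a c\'adl\'ag \emph{rough} differential equation for each fixed $\omega$ and invokes \cite[Theorem 1.13]{Friz} for pathwise existence and uniqueness and \cite[Proposition 6.9]{Friz} to conclude that $\omega\mapsto S(\cdot,\omega)$ is the strong solution (i.e.\ to settle measurability/adaptedness). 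You exploit the linearity more explicitly: since the driver has bounded variation and no continuous martingale part, the solution is the Dol\'eans--Dade exponential with the absolutely convergent product representation you give, and uniqueness is elementary (pathwise Gr\"onwall for Lebesgue--Stieltjes drivers, or the localized Lipschitz argument). What your route buys is an explicit formula and, more importantly, an honest treatment of non-negativity: as you observe, $\Delta Z(s)=\theta_s\Delta\gamma(s)+\sqrt{V(s-)}\,\Delta\Gamma_1(s)$ is not bounded below by $-1$, so the exponential can change sign, and your stopping time $\tau_*\wedge\inf\{t:\ S(t)\le 0\}$ is exactly what makes ``non-negative'' literally true; the paper's proof does not address this point and only acknowledges it indirectly in the subsequent remark, where the positivity time $\widetilde{\tau}^X$ is said to satisfy $\widetilde{\tau}^X\le\tau^S$ without a claim of equality. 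Your statement is therefore slightly weaker than the paper's (a possibly smaller Markov time), but it matches what the theorem actually asserts. One small caveat: \cite[Theorem 6.2.3]{Applebaum} is stated for coefficients that are deterministic functions of the state, whereas here the coefficients involve the predictable processes $r^d(t-)-r^f(t-)$ and $\sqrt{V(t-)}$; this is harmless in your argument because the explicit stochastic-exponential formula (or the pathwise Gr\"onwall estimate for the finite-variation driver) already delivers existence and uniqueness, but the citation as stated does not literally apply, so you should either lean on the pathwise argument or cite a semimartingale-SDE result allowing random coefficients.
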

\begin{proof}
Let $\tau_0^V,\tau_0^d$ and $\tau_0^f$ be the Markov times up to which Equations \eqref{eq:V0}, \eqref{eq:rd0} and \eqref{eq:rf0} admit a pathwise unique strong solution and let $\tau_0^S=\tau_0^V \wedge \tau_0^d \wedge \tau_0^f$. Without loss of generality, we can assume that $\gamma(t)$ and $\Gamma_1(t)$ are of bounded variation and c\'adl\'ag for any $\omega \in \Omega$, while $V(t)$, $r^d(t)$ and $r^f(t)$ are c\'adl\'ag for any $\omega \in \Omega$. Then we define the process, for $t<\tau_0^S$,
\begin{equation*}
    X(t)=\int_0^t(r^d(\tau-)-r^f(\tau-))d\tau+\theta_s\gamma(t)+\int_0^t \sqrt{V(\tau-)}d\Gamma_1(\tau),
\end{equation*}
where we can consider each integral as a Lebesgue-Stieltjes integral for any fixed $\omega \in \Omega$.\\
Now fix $\omega \in \Omega$ and any $T<\tau_0^S(\omega)$. Then we can consider the random rough differential equation
\begin{equation*}
    dS(t,\omega)=S(t-,\omega)dX(t,\omega), \qquad t \in [0,T], \qquad S(0,\omega)=S_0>0.
\end{equation*}
Such an equation admits a unique c\'adl\'ag solution for each $\omega$, thanks to \cite[Theorem 1.13]{Friz}, while \cite[Proposition 6.9]{Friz} guarantees that $\omega \mapsto S(\cdot,\omega)$ is the unique strong solution of Equation \eqref{eq:S0}, concluding the proof.
\end{proof}
\begin{remark}
By definition, $\tau_0^S \le \tau_0^V,\tau_0^d,\tau_0^f$. Hence we can define $\tau_0^S$ as the local existence time threshold for the whole Heston-CIR system \eqref{HCL}. 
\end{remark}
In the next section we will focus on the convergence of the forward Euler-Maruyama scheme to the solution of our Heston-CIR model. To do this, we will use a localization argument which is similar to the one adopted in the proof of Theorem \ref{thm:exunrf0} in the case $\rho_{sf}<0$.
\section{Convergence of the Euler discretization method to the Heston-CIR of VG  L\'{e}vy type model} \label{Sec:Convergence of the simulation}

As we stated in the previous Section, we want to prove some form of a convergence of the forward Euler-Maruyama scheme for the Heston-CIR system \eqref{HCL}. As we did for the proof of Theorem \ref{thm:exunrf0}, we first need to recast the equations in order to handle the correlation structure between the processes $\Gamma_j(t)$, $j=1,2,3,4$. Arguing as before, we can consider the Cholesky factorization $\Sigma_{\bW}=HH^T$ (where $H=(H_{i,j})_{i,j=1,\dots,4}$) and define a $4$-dimensional standard Brownian motion $\overline{\bW}(t)=(\oW_1(t),\oW_2(t),\oW_3(t),\oW_4(t))$ by setting
\begin{equation*}
    \boW^T(t)=R^{-1}\bW^T(t).
\end{equation*}
Then we can define the process $\oGamma(t):=(\oGamma_1(t),\oGamma_2(t),\oGamma_3(t),\oGamma_4(t))$ as $\oGamma(t)=\boW(\gamma(t))$. Again, to find the L\'evy-It\^o decomposition of $(\gamma(t),\oGamma(t))$, we need the following technical Lemma, whose proof is identical to the one of Lemma \ref{lem:Levy}.
\begin{lemma}\label{lem:Levy3}
Let
\begin{equation*}
    p_4(s,u)=\frac{1}{4\pi s^2}e^{-\frac{|u|^2}{2s}}1_{\R^+}(s), \ s \in \R, \ u \in \R^4.
\end{equation*}
Then the process $(\gamma(t),\oGamma(t))$ is a pure jump L\'evy process on $\R^5$ with L\'evy measure
\begin{equation*}
    \nu_4(dsdu)=\nu_4(s,u)dsdu=g_\gamma(s)p_4(s,u)dsdu.
\end{equation*}
\end{lemma}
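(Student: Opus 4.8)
The plan is to repeat, essentially word for word, the argument proving Lemma~\ref{lem:Levy} (which the present lemma's proof is stated to mirror), the only change being that the one-dimensional Gaussian kernel $p_1$ is replaced by the four-dimensional kernel $p_4$. The first step is to check that $Z(t):=(\gamma(t),\oGamma(t))=(\gamma(t),\boW(\gamma(t)))$ is a L\'evy process on $\R^5$: it starts at the origin and has c\`adl\`ag paths, while the stationarity and independence of its increments are inherited from the fact that $\gamma$ is a subordinator independent of the L\'evy process $\boW$, together with the standard observation that, conditionally on the path of $\gamma$, the increment $\boW(\gamma(t))-\boW(\gamma(s))$ depends in law only on $\gamma(t)-\gamma(s)$ and is independent of $\cF_s$; stochastic continuity follows along the same lines (alternatively one may simply invoke the general subordination theory, e.g. \cite{Bertoin}).

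Next I would pin down the L\'evy--It\^o triplet of $Z$ through its characteristic function. Conditioning on $\gamma(t)$ and using that, given $\gamma(t)=s$, the vector $\boW(s)$ is centred Gaussian with covariance $sI_4$, one gets, for $\xi\in\R$ and $\zeta\in\R^4$,
\begin{equation*}
\E\!\left[e^{i\xi\gamma(t)+i\langle\zeta,\oGamma(t)\rangle}\right]=\E\!\left[e^{(i\xi-\frac12|\zeta|^2)\gamma(t)}\right].
\end{equation*}
Since the Gamma subordinator is pure-jump with no drift and L\'evy measure $g_\gamma(s)\,ds$, its Laplace/characteristic exponent is $w\mapsto\int_0^{+\infty}(e^{ws}-1)g_\gamma(s)\,ds$, an identity which extends to all complex $w$ with $\Re(w)\le0$; evaluating at $w=i\xi-\frac12|\zeta|^2$ yields
\begin{equation*}
\E\!\left[e^{i\xi\gamma(t)+i\langle\zeta,\oGamma(t)\rangle}\right]=\exp\!\left(t\int_0^{+\infty}\left(e^{(i\xi-\frac12|\zeta|^2)s}-1\right)g_\gamma(s)\,ds\right).
\end{equation*}
It then remains to recognise the right-hand side as the L\'evy--Khintchine exponent associated with $\nu_4$: as $p_4(s,\cdot)$ is the density of $N(0,sI_4)$, one has $\int_{\R^4}p_4(s,u)\,du=1$ and $\int_{\R^4}e^{i\langle\zeta,u\rangle}p_4(s,u)\,du=e^{-\frac12|\zeta|^2s}$ for every $s>0$, so Fubini gives
\begin{equation*}
\int_{\R^5}\left(e^{i\xi s+i\langle\zeta,u\rangle}-1\right)\nu_4(ds\,du)=\int_0^{+\infty}\left(e^{(i\xi-\frac12|\zeta|^2)s}-1\right)g_\gamma(s)\,ds.
\end{equation*}

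Before invoking uniqueness of the L\'evy--Khintchine triplet I would confirm that $\nu_4$ is an admissible L\'evy measure and that the representation above requires no compensator term: splitting the $s$-integral at $1$, the mass of $\nu_4$ on $\{s>1\}$ is at most $\int_1^{+\infty}g_\gamma(s)\,ds<+\infty$, while on $\{s\le1\}$ one has $\int_{\R^4}(s+|u|)\,p_4(s,u)\,du=s+c_4\sqrt{s}$, which is $g_\gamma$-integrable near $0$ since $\int_0^1 s^{-1/2}e^{-\beta s}\,ds<+\infty$; hence $\int_{\R^5}(1\wedge|(s,u)|)\,\nu_4(ds\,du)<+\infty$, so $Z$ has finite variation, no drift term is needed, and the absence of a quadratic term in the exponent shows the Gaussian part is trivial. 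By uniqueness of the triplet one concludes that $(\gamma(t),\oGamma(t))$ is a pure-jump L\'evy process on $\R^5$ with L\'evy measure $\nu_4$. I do not expect a genuine obstacle here, the statement being flagged as routine; the points that deserve a little care are the stationary-independent-increments check for the subordinated pair (where the independence of $\boW$ and $\gamma$ is essential), the legitimacy of inserting the complex value $w=i\xi-\frac12|\zeta|^2$ into the Gamma Laplace exponent, and the finite-variation bookkeeping that licenses writing the exponent without the usual $i\langle\zeta,u\rangle\,1_{|u|\le1}$ correction.
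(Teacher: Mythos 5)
Your argument is correct, but it follows a different route from the paper's. The paper proves this lemma (like Lemma \ref{lem:Levy}, of which it is declared a verbatim copy) by invoking Phillips' subordination theorem \cite[Theorem 30.1]{Sato}: since $(t,\boW(t))$ is a L\'evy process and $\gamma$ an independent subordinator, the subordinated pair is automatically a L\'evy process and the same theorem hands over the formula $\nu_4(A)=\int_0^{+\infty}\bQ((s,\boW(s))\in A)\,g_\gamma(s)\,ds$, which is then evaluated on product sets to read off the density $g_\gamma(s)p_4(s,u)$. You instead compute the joint characteristic function by conditioning on $\gamma(t)$, plug $w=i\xi-\tfrac12|\zeta|^2$ into the Gamma exponent, identify the result via Fubini as the compensator-free L\'evy--Khintchine exponent of $\nu_4$, and conclude by uniqueness of the triplet after checking $\int(1\wedge|(s,u)|)\,\nu_4(ds\,du)<+\infty$. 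The paper's route is shorter because the subordination theorem delivers both the L\'evy property and the measure in one stroke; yours is more self-contained (only the Gamma Laplace exponent, Gaussian characteristic function and Fubini) at the cost of the extra bookkeeping you correctly carry out: the analytic extension of the exponent to $\Re(w)\le 0$, the absolute integrability licensing Fubini and the drift-free finite-variation representation, and the appeal to uniqueness of the triplet. One small point: your computation uses that $p_4(s,\cdot)$ is the density of $N(0,sI_4)$, i.e.\ normalising constant $(2\pi s)^{-2}=\tfrac{1}{4\pi^2 s^2}$; the constant $\tfrac{1}{4\pi s^2}$ displayed in the statement is evidently a typo (it does not integrate to one in $u$), so your proof establishes the intended, correctly normalised version.
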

We can deduce from the previous Lemma the L\'evy-It\^o decomposition of $(\gamma(t),\oGamma(t))$, by defining the Poisson random measure, for $t \in \R^+$ and $A \in \cB(\R^5 \setminus \{0\})$ as
\begin{equation*}
N_{\oGamma}(t,A)=\sum_{0 \le s \le t}1_{A}((\Delta \gamma(s), \Delta \oGamma(s))),
\end{equation*}
and the compensated Poisson measure as $\wN_{\oGamma}(dt,dsdu)=N_{\oGamma}(dt,dsdu)-\nu_4(dsdu)dt$ and then observing that
\begin{equation*}
    (\gamma(t),\oGamma(t))=\int_0^t\int_{\R^5}(s,u)\wN_{\oGamma}(dt,dsdu).
\end{equation*}
Once we have obtained the L\'evy-It\^o decomposition of $(\gamma(t),\oGamma(t))$, we can rewrite the Heston-CIR system as follows
\begin{equation*}
    dX(t)=b(X(t-))dt+\int_{\R^5}g(s,u,X(t-))\wN_{\oGamma}(dt,dsdu),
\end{equation*}
where $X(t)=(S(t),V(t),r^d(t),r^f(t))^T$,
\begin{equation*}
    b(x)=\begin{pmatrix} x_1(x_3-x_4) \\ \kappa_v(a_v-x_2) \\ \kappa_d(a_d-x_3) \\ \kappa_f(a_f-x_4)-\sigma_f \rho_{sf}\sqrt{x_2x_4}
    \end{pmatrix}
\end{equation*}
and
\begin{equation*}
    g(s,u,x)=\begin{pmatrix} x_1(\theta_ss+H_{1,1}u_1\sqrt{x_2}) \\
    \theta_vs+(H_{2,1}u_1+H_{2,2}u_2)\sigma_v\sqrt{x_2}\\
    \theta_ds+(H_{3,1}u_1+H_{3,2}u_2+H_{3,3}u_3)\sigma_d\sqrt{x_3}\\
    \theta_fs+(H_{4,1}u_1+H_{4,2}u_2+H_{4,3}u_3+R_{4,4}u_4)\sigma_f\sqrt{x_4}.
    \end{pmatrix}
\end{equation*}
Let us also recall that recursive formulae for $H_{i,j}$ are known:
\begin{align}\label{eq:Choleskyfactor}
\begin{split}
H_{1,1}&=1\\
H_{i,j}&=\frac{1}{H_{j,j}}\left((\Sigma_{\bW})_{i,j}-\sum_{k=1}^{j-1}H_{i,k}\right) , \qquad i<j\\
H_{j,j}&=\sqrt{1-\sum_{k=1}^{j-1}H_{j,k}^2}.
\end{split}
\end{align}
Once this is done, we proceed with the localization of the equation. Precisely, let $n_0 \in \N$ be such that $(S_0,V_0,r^d_0,r^f_0) \in \left[\frac{1}{n_0},n_0\right]$ and consider $\psi^{(3)}_n$ for $n \ge n_0$ as defined in the proof of Theorem \ref{thm:exunrf0}. Let then $\psi^{(5)}_n(x)=\prod_{j=1}^{4}\psi^{(3)}_n(x_j)$ for any $x \in \R^4$ and
\begin{align*}
    b_n(x)=\psi^{(5)}_n(x)b(x) \qquad g_n(s,u,x)=\psi^{(5)}_n(x)g(s,u,x).
\end{align*}
Clearly, by definition, $b_n$ is Lipscthiz and bounded. Concerning $g_n$, we need two technical observations.

\begin{lemma}\label{lem:moments}
For any $p \ge 1$ and $j=1,2,3,4$ it holds
\begin{align*}
    \int_{\R^5}|s|^p\nu_4(dsdu)<+\infty && \int_{\R^5}|u_j|^p\nu_4(dsdu)<+\infty.
\end{align*}
\end{lemma}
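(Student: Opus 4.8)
The plan is to integrate out the variable $u$ first, reducing each of the two quantities to a one-dimensional integral of $g_\gamma$ against a power of $s$, which is an elementary Gamma integral. Since $\nu_4(ds\,du)=g_\gamma(s)p_4(s,u)\,ds\,du$ with $g_\gamma(s)=\frac{\alpha}{s}e^{-\beta s}1_{\R^+}(s)$ and all the integrands involved are non-negative, Tonelli's theorem lets me iterate the integration over $\R^5=\R^+_s\times\R^4_u$ in any order. I would first record the two Gaussian-type identities: for every fixed $s>0$,
\begin{equation*}
    \int_{\R^4}p_4(s,u)\,du=c_0\in(0,+\infty), \qquad \int_{\R^4}|u_j|^p\,p_4(s,u)\,du=c_p\,s^{p/2},\quad j=1,2,3,4,
\end{equation*}
where $c_0,c_p$ are finite and do not depend on $s$. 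Both follow from the change of variables $u=\sqrt{s}\,v$, under which $du=s^{2}\,dv$ and $e^{-|u|^2/(2s)}=e^{-|v|^2/2}$, so that the whole $s$-dependence of $p_4(s,u)\,du=\frac{1}{4\pi}e^{-|v|^2/2}\,dv$ cancels and one is left with $\int_{\R^4}\frac{1}{4\pi}e^{-|v|^2/2}\,dv$ and with $s^{p/2}\int_{\R^4}\frac{1}{4\pi}|v_j|^p e^{-|v|^2/2}\,dv$ respectively; the latter constant is finite because a Gaussian vector has absolute moments of every order.

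Substituting these identities into the iterated integrals, I would obtain
\begin{equation*}
    \int_{\R^5}|s|^p\nu_4(ds\,du)=c_0\int_0^{+\infty}s^{p}g_\gamma(s)\,ds=c_0\,\alpha\int_0^{+\infty}s^{p-1}e^{-\beta s}\,ds=\frac{c_0\,\alpha\,\Gamma(p)}{\beta^{p}}<+\infty
\end{equation*}
and
\begin{equation*}
    \int_{\R^5}|u_j|^p\nu_4(ds\,du)=c_p\int_0^{+\infty}s^{p/2}g_\gamma(s)\,ds=c_p\,\alpha\int_0^{+\infty}s^{p/2-1}e^{-\beta s}\,ds=\frac{c_p\,\alpha\,\Gamma(p/2)}{\beta^{p/2}}<+\infty,
\end{equation*}
which is exactly the assertion of the Lemma (and, by the symmetry of $p_4$ in the components of $u$, it holds for every $j=1,2,3,4$ with the same constant).

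There is no genuine obstacle here: this is a direct computation. The only place where the hypothesis $p\ge 1$ enters — and where $p>0$ would in fact already be enough — is the integrability of the final $s$-integrals near $s=0$, since the L\'evy density $g_\gamma$ carries a $\frac1s$ singularity at the origin; the integrands above behave like $s^{p-1}$ and $s^{p/2-1}$ there, which is locally integrable precisely when $p>0$, while the factor $e^{-\beta s}$ handles convergence at $+\infty$. Accordingly, the one step worth stating with care is the Gaussian scaling identity above, ensuring that integration of $p_4(s,\cdot)$ against $|u_j|^p$ returns exactly $s^{p/2}$ times an $s$-independent finite constant.
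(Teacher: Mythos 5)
Your proof is correct and follows essentially the same route as the paper's: integrating out $u$ via the Gaussian scaling (the paper invokes the factorization $p_4(s,u)=\prod_{j}p_1(s,u_j)$ together with the Gaussian absolute-moment formula, which is exactly your substitution $u=\sqrt{s}\,v$), thereby reducing both quantities to the Gamma-type integrals $\alpha\int_0^{+\infty}s^{p-1}e^{-\beta s}\,ds$ and $\alpha\int_0^{+\infty}s^{p/2-1}e^{-\beta s}\,ds$. Your explicit evaluation in terms of $\Gamma(p)$ and $\Gamma(p/2)$ and the remark that $p>0$ would already suffice are harmless refinements of the same argument.
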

\begin{proof}
Fix $p \ge 1$ and let us prove the first inequality. Indeed, we have
\begin{equation*}
\int_{\R^5}|s|^p\nu_4(dsdu)=\alpha\int_0^{+\infty}s^{p-1}e^{-\beta s}\left(\int_{\R^4}p_4(s,u)du\right)ds<+\infty
\end{equation*}
since $p \ge 1$. 
To prove the second inequality, just observe that $p_4(s,u)=\prod_{j=1}^{4}p_1(s,u_j)$, so that, since $p \ge 1$, by the well-known formula for the absolute moments of the Gaussian distribution, it holds
\begin{align*}
\int_{\R^5}|u_j|^p\nu_4(dsdu)=C_p\alpha\int_0^{+\infty}s^{\frac{p}{2}-1}e^{-\beta s}<+\infty.
\end{align*}
\end{proof}
\begin{lemma}\label{lem:Lip}
For any $n \in \N$ there exist two functions $M_n,L_n:\R^5 \to \R$ such that:
\begin{itemize}
    \item[(i)] For any $s \in \R$ and $x,u \in \R^4$ it holds
    \begin{equation}
    |g_n(s,u,x)|\le M_n(s,u)
\end{equation}
\item[(ii)] For any $s \in \R$ and $x,y,u \in \R^4$ it holds
\begin{equation}
    |g_n(s,u,x)-g_n(s,u,y)|\le L_n(s,u)|x-y|
\end{equation}
\item[(iii)] For any $p \ge 1$ it holds
\begin{align*}
    \int_{\R^5}|M_n(s,u)|^p\nu_4(dsdu)<+\infty && \int_{\R^5}|L_n(s,u)|^p\nu_4(dsdu)<+\infty. 
\end{align*}
\end{itemize}
\end{lemma}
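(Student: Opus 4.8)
The plan is to exploit the single feature that makes the localized coefficients tame: the cutoff $\psi^{(5)}_n$ confines the $x$-variable to a compact box on which the arguments of every square root occurring in $g$ are bounded away from $0$, so that $g_n(s,u,\cdot)$ is a smooth, compactly supported $\R^4$-valued map whose sup-norm and whose $x$-gradient grow only linearly in $(s,u)$. The required integrability in $(s,u)$ then reduces to Lemma~\ref{lem:moments}.

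First I would record the elementary properties of the cutoffs. Recall from the proof of Theorem~\ref{thm:exunrf0} that $\psi^{(3)}_n$ is a smooth function with $0 \le \psi^{(3)}_n \le 1$, identically $1$ on a neighbourhood of $[1/n,n]$ and with $\mathrm{supp}(\psi^{(3)}_n)$ a compact subset $[c_n,C_n]$ of $(0,+\infty)$, for constants $0<c_n<C_n$ depending only on $n$. Hence $\psi^{(5)}_n(x)=\prod_{j=1}^4\psi^{(3)}_n(x_j)$ lies in $C^\infty_c(\R^4)$, satisfies $0\le\psi^{(5)}_n\le 1$, is supported in the box $K_n:=[c_n,C_n]^4$, and vanishes identically on the open set $\{x\in\R^4:\ \min_j x_j<c_n\}$, which is a neighbourhood of $\R^4\setminus(0,+\infty)^4$. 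Since $x\mapsto g(s,u,x)$ is $C^\infty$ on $(0,+\infty)^4$ (the only non-smooth ingredients are $\sqrt{x_2},\sqrt{x_3},\sqrt{x_4}$, smooth for positive arguments), the product $g_n(s,u,\cdot)=\psi^{(5)}_n\,g(s,u,\cdot)$, extended by $0$ where $\psi^{(5)}_n$ vanishes, is a well-defined element of $C^\infty(\R^4;\R^4)$ supported in $K_n$.

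Next I would establish the three estimates. For (i), on $K_n$ one has $|x_1|\le C_n$ and $\sqrt{x_j}\le\sqrt{C_n}$, while $\sum_j H_{i,j}^2=1$ forces $|H_{i,j}|\le 1$; reading off the components of $g$, every component is bounded on $K_n$ by $C(n)\bigl(|s|+|u_1|+\dots+|u_4|\bigr)$, and using $0\le\psi^{(5)}_n\le1$ we may take
\begin{equation*}
M_n(s,u):=C(n)\bigl(|s|+|u|\bigr),\qquad\text{so that}\qquad |g_n(s,u,x)|\le M_n(s,u)\ \ \text{for all }s\in\R,\ x,u\in\R^4 .
\end{equation*}
For (ii), I would differentiate in $x$: by the product rule $D_xg_n=(D_x\psi^{(5)}_n)\,g+\psi^{(5)}_n\,D_xg$, where on $K_n$ we have $|D_x\psi^{(5)}_n|\le C(n)$, $|g|\le C(n)(|s|+|u|)$ as above, and $|D_xg|\le C(n)(|s|+|u|)$ — the only delicate entries being $\partial_{x_2}\bigl(x_1\sqrt{x_2}\bigr)=x_1/(2\sqrt{x_2})$ and $\partial_{x_j}\sqrt{x_j}=1/(2\sqrt{x_j})$, which are bounded on $K_n$ precisely because $x_j\ge c_n>0$ there. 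Since $g_n(s,u,\cdot)$ vanishes off $K_n$, this gives $\sup_{z\in\R^4}\lVert D_xg_n(s,u,z)\rVert\le C(n)(|s|+|u|)$, and the mean value theorem on the convex set $\R^4$ yields
\begin{equation*}
|g_n(s,u,x)-g_n(s,u,y)|\le L_n(s,u)\,|x-y|,\qquad L_n(s,u):=C(n)\bigl(|s|+|u|\bigr).
\end{equation*}
Finally, (iii) is immediate from Lemma~\ref{lem:moments}: for any $p\ge1$, since $|u|\le\sum_{j=1}^4|u_j|$, convexity gives $(|s|+|u|)^p\le C(p)\bigl(|s|^p+\sum_j|u_j|^p\bigr)$, hence
\begin{equation*}
\int_{\R^5}|M_n(s,u)|^p\,\nu_4(ds\,du)\le C(n,p)\Bigl(\int_{\R^5}|s|^p\nu_4(ds\,du)+\sum_{j=1}^4\int_{\R^5}|u_j|^p\nu_4(ds\,du)\Bigr)<+\infty,
\end{equation*}
and the same bound holds verbatim for $L_n$.

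The only genuine subtlety — the main obstacle — is the square-root nonlinearity: $\sqrt{\cdot}$ is not globally Lipschitz near $0$, so $g$ itself is \emph{not} Lipschitz in $x$, and the estimate (ii) would fail without the localization. What rescues it is exactly that $\psi^{(5)}_n$ pushes $x_2,x_3,x_4$ into $[c_n,C_n]$, where $\sqrt{\cdot}$ and its derivative are bounded, together with the fact that multiplying a function that is smooth on $(0,+\infty)^4$ by a cutoff supported well inside $(0,+\infty)^4$ produces a globally smooth, compactly supported function; once this is in place, everything else is bookkeeping and a direct appeal to Lemma~\ref{lem:moments}.
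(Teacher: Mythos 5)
Your proof is correct and follows essentially the same route as the paper's: the cutoff $\psi^{(5)}_n$ confines $x$ to a compact box away from the origin, where $g$ and its $x$-gradient are bounded by $C_n(|s|+|u|)$, and integrability of $M_n$ and $L_n$ is then read off from Lemma \ref{lem:moments}. You merely spell out the ``simple but cumbersome calculations'' (the gradient bound via the product rule and the boundedness of $1/\sqrt{x_j}$ on the support) that the paper leaves implicit, which is a faithful elaboration rather than a different argument.
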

\begin{proof}
To prove (i), let us first observe that $g_n(s,u,x)=0$ whenever $x_j \not \in \left[\frac{1}{n+1},n+1\right]$ for some $j=1,2,3,4$. Hence we only need to work with $x_j \in \left[\frac{1}{n+1},n+1\right]$ for all $j=1,2,3,4$. 

Recalling that $|\psi_n^{(5)}(x)|\le 1$ for any $x \in \R^4$, we have
\begin{align*}
    |g_n(s,u,x)|\le |g(s,u,x)|\le C_n(|s|+|u_1|+|u_2|+|u_3|+|u_4|)=:M_n(s,u).
\end{align*}
To prove (ii), let us observe that for fixed $s \in \R$ and $u \in \R^4$ $g_n(s,u,\cdot)$ is a Lipschitz function, hence we only need to determine an upper bound for the gradient of $g_n(s,u,\cdot)$. Let us also stress that $g_n(s,u,\cdot)=0$ as $x_j \not \in \left[\frac{1}{n+1},n+1\right]$ for some $j=1,2,3,4$, hence we only need to consider the case $x_j \not \in \left[\frac{1}{n+1},n+1\right]$ for all $j=1,2,3,4$. By simple but cumbersome calculations, it can be shown that
\begin{equation*}
    |\nabla g_n(s,u,x)|\le C_n(|s|+|u_1|+|u_2|+|u_3|+|u_4|)=:L_n(s,u).
\end{equation*}
Finally, item (iii) follows directly from Lemma \ref{lem:moments}.
\end{proof}
Now, we consider the solution of the localized equation
\begin{equation}\label{eq:local}
    dX^n(t)=b_n(X^n(t-))dt+\int_{\R^5}g_n(s,u,X^n(t-))\wN_{\oGamma}(dt,dsdu), \ X^n(0)=X_0
\end{equation}
which exists for any $t \ge 0$ since we are under the hypotheses of \cite[Theorem 6.2.3]{Applebaum}. Moreover, if we set 
\begin{equation*}
\tau_n^X:=\inf\left\{t \ge 0: X^n(t) \not \in \left[\frac{1}{n},n\right]\right\},
\end{equation*}
by pathwise uniqueness we know that $X^n(t)=X(t)$ for any $t<\tau_n^X$. We also know that such solutions are c\'adl\'ag. However, we first want to prove an important property of $X^n(t)$ which will turn out to be useful in the following.
\begin{theorem}\label{thm:stoccont}
The process $X^n(t)$ is stochastically continuous, i.e. for any fixed $t>0$ it holds $\bQ(X^n(t-)=X^n(t))=1$.
\end{theorem}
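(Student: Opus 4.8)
The plan is to reduce the statement to the well-known fact that a L\'evy process has no fixed time of discontinuity. Since we already know that $X^n$ is c\`adl\`ag, $X^n(t-)$ is well defined for every $t>0$, so the claim $\bQ(X^n(t-)=X^n(t))=1$ is equivalent to $\bQ(\Delta X^n(t)\neq 0)=0$; this is what I would establish. The underlying idea is that every jump of $X^n$ is inherited from the Poisson part of the driving noise, whose jump times are exactly the jump times of $(\gamma(t),\oGamma(t))$.

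First I would rewrite \eqref{eq:local} in integrated form,
\begin{equation*}
X^n(t)=X_0+\int_0^tb_n(X^n(\tau-))\,d\tau+\int_0^t\int_{\R^5}g_n(s,u,X^n(\tau-))\,\wN_{\oGamma}(d\tau,dsdu),
\end{equation*}
and split the stochastic integral into the compensator $-\int_0^t\int_{\R^5}g_n(s,u,X^n(\tau-))\,\nu_4(dsdu)\,d\tau$ and the pure Poisson integral $\int_0^t\int_{\R^5}g_n(s,u,X^n(\tau-))\,N_{\oGamma}(d\tau,dsdu)$. By Lemma \ref{lem:Lip} one has $|g_n(s,u,x)|\le M_n(s,u)$ with $\int_{\R^5}M_n(s,u)\,\nu_4(dsdu)<+\infty$, so the inner $\nu_4$-integral is bounded uniformly in $\tau$ and $\omega$; since moreover $b_n$ is bounded, both the drift term and the compensator term are, pathwise, absolutely continuous in $t$ and hence carry no jumps. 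Consequently the jump of $X^n$ at $t$ equals that of the pure Poisson integral, i.e.
\begin{equation*}
\Delta X^n(t)=\int_{\R^5}g_n(s,u,X^n(t-))\,N_{\oGamma}(\{t\},dsdu),
\end{equation*}
the Poisson integral being, by the same bound, the absolutely convergent sum $\sum_{0<\tau\le t}g_n(\Delta\gamma(\tau),\Delta\oGamma(\tau),X^n(\tau-))\,1_{\{(\Delta\gamma(\tau),\Delta\oGamma(\tau))\neq 0\}}$.

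Next I would use that, by the very definition of $N_{\oGamma}$, $N_{\oGamma}(\{t\},\R^5\setminus\{0\})=1_{\{(\Delta\gamma(t),\Delta\oGamma(t))\neq 0\}}$, whence $\{\Delta X^n(t)\neq 0\}\subseteq\{(\Delta\gamma(t),\Delta\oGamma(t))\neq 0\}$. By Lemma \ref{lem:Levy3} the process $(\gamma(t),\oGamma(t))$ is a L\'evy process, hence stochastically continuous, so it has no fixed time of discontinuity and $\bQ((\Delta\gamma(t),\Delta\oGamma(t))\neq 0)=0$ for every fixed $t>0$; equivalently, writing $\R^5\setminus\{0\}=\bigcup_k A_k$ with $\nu_4(A_k)<+\infty$, one has $\E[N_{\oGamma}(\{t\},A_k)]=0$ for each $k$ because the intensity $dt\,\nu_4(dsdu)$ puts no mass on $\{t\}\times A_k$. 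Combining this with the inclusion above yields $\bQ(\Delta X^n(t)\neq 0)=0$, that is $\bQ(X^n(t-)=X^n(t))=1$.

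The step I expect to be the main obstacle is making rigorous the separation of the jump part of $X^n$ from the drift and compensator parts — i.e. that $\Delta X^n(t)$ is governed solely by $N_{\oGamma}(\{t\},\cdot)$ — which relies on the uniform integrability bounds of Lemma \ref{lem:Lip}; once that is in hand, the conclusion follows directly from the stochastic continuity of the L\'evy process $(\gamma(t),\oGamma(t))$.
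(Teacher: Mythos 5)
Your proof is correct, but it takes a genuinely different route from the paper's. The paper argues via a moment estimate: writing $X^n(t)-X^n(t-h)$ through the integrated equation on $[t-h,t]$, it bounds $\E\left[|X^n(t)-X^n(t-h)|^2\right]\le Ch$ using the boundedness of $b_n$, Kunita's first inequality and items (i), (iii) of Lemma \ref{lem:Lip}, and then lets $h\downarrow 0$ and applies Fatou's lemma to conclude $\E\left[|X^n(t)-X^n(t-)|^2\right]=0$. You instead argue pathwise and structurally: since $M_n\in L^1(\nu_4)$ (item (iii) with $p=1$), the compensated integral splits into an absolutely convergent Poisson sum plus an absolutely continuous compensator, so every jump of $X^n$ must occur at a jump time of the driving L\'evy process $(\gamma(t),\oGamma(t))$, which by Lemma \ref{lem:Levy3} and the stochastic continuity of L\'evy processes has no fixed time of discontinuity; hence $\bQ(\Delta X^n(t)\neq 0)=0$ at every fixed $t$. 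Both arguments are complete. The paper's computation buys a quantitative $L^2$ increment bound (of order $h$) that is essentially recycled in the Euler--Maruyama estimates of Theorem \ref{thm:QuasiGyongy} and needs only square-integrability of the jump coefficient; your argument avoids moment calculations altogether and pinpoints exactly where jumps can occur, but it leans on the $L^1(\nu_4)$ domination to separate $N_{\oGamma}$ from its compensator --- a step that would not be available for a general L\'evy measure with non-summable small jumps, though here it is fully justified by Lemmas \ref{lem:moments} and \ref{lem:Lip}.
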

\begin{proof}
Let us fix $t>0$ and let $h \in (0,1)$ be small enough to have $t-h>0$. By definition, it holds
\begin{equation*}
    X^n(t)=X^n(t-h)+\int_{t-h}^t b_n(X^n(z-))dz+\int_{t-h}^t\int_{\R^5}g_n(s,u,X^n(z-))\wN_{\oGamma}(dz,dsdu).
\end{equation*}
Hence, by the fact that $b_n$ is bounded and applying Kunita's first inequality (see \cite[Theorem 4.4.23]{Applebaum}) and items (i) and (iii) of Lemma \ref{lem:Lip}, we obtain
\begin{align}\label{eq:estcont1}
\begin{split}
    \E&\left[|X^n(t)-X^n(t-h)|^2\right]\le Ch^2\\
    &\quad +2\E\left[\sup_{v \in [t-h,t]}\left|\int_{t-h}^v\int_{\R^5}g_n(s,u,X^n(z-))\wN_{\oGamma}(dz,dsdu)\right|^2\right] \le Ch,
\end{split}
\end{align}
which concludes the proof by a simple application of Fatou's lemma.
\end{proof}
\begin{remark}
Let us stress that, since for any $n \ge n_0$, up to $\tau^X_n$ the processes $X^n(t)$ and $X(t)$ coincide, this also proves the stochastic continuity of the process $X(t)$ up to the Markov time $\widetilde{\tau}^X=\lim_{n \to +\infty}\tau_n^X$. By definition, it is also clear that 
\begin{equation*}
\widetilde{\tau}^X:=\inf\{t>0: \ S(t) \vee V(t) \vee r^d(t) \vee r^f(t) \le 0\}.
\end{equation*}
Hence we can conclude that $\widetilde{\tau}^X \le \tau^S$, while we are not able, up to now, to prove equality. This justifies the fact that we did not use this localization technique to prove local existence for \eqref{eq:V0}, \eqref{eq:rd0} and \eqref{eq:rf0} if $\rho_{sf} \ge 0$. Indeed, our proof guarantees a local existence time that could be possibly greater than the one obtained by localization.
\end{remark}
Now let us introduce the forward Euler-Maruyama approximation of the localized equation \eqref{eq:local}. Let us consider a stepsize $\Delta t$ and let $t_k=k\Delta t$ for $k \in \N$. Let us also denote $\Delta \gamma_k=\gamma(t_{k+1})-\gamma(t_k)$ and $\Delta \oGamma_k=\oGamma(t_{k+1})-\oGamma(t_k)$ for $k \in \N$. Then we define $X^n_{\Delta t,0}=X_0$ and, for $k \ge 0$,
\begin{align*}
X^n_{\Delta t,k+1}&=X^n_{\Delta t,k}+b_n(X^n_{\Delta t,k})\Delta t+\langle g(1,1,X^n_{\Delta t,k}),(\Delta \gamma_k,\Delta \oGamma_k)\rangle\\
&=X^n_{\Delta t,k}+\int_{k\Delta t}^{(k+1)\Delta t}b_n(X^n_{\Delta t,k})dt+\int_{k\Delta t}^{(k+1)\Delta t}\int_{\R^5}g_n(s,u,X^n_{\Delta t,k})\wN_{\oGamma}(dt,dsdu).
\end{align*}
It is clear, by induction, that
\begin{align*}
X^n_{\Delta t,k}&=X_0+\int_{0}^{k\Delta t}b_n(X^n_{\Delta t,k-1})dt+\int_{0}^{k\Delta t}\int_{\R^5}g_n(s,u,X^n_{\Delta t,k-1})\wN_{\oGamma}(dt,dsdu).
\end{align*}
We can construct a polygonal-like continuous extension of the discrete process $(X^n_{\Delta t,k})_{k \ge 0}$ as follows:
\begin{align*}
X^n_{\Delta t}(t)&=X_0+\int_{0}^{t}b_n(X^n_{\Delta t}(\eta_{\Delta t}(t)-)dt+\int_{0}^{t}\int_{\R^5}g_n(s,u,X^n_{\Delta t}(\eta_{\Delta t}(t)-)\wN_{\oGamma}(dt,dsdu),
\end{align*}
where $\eta_{\Delta t}(t)=k\Delta t$ if $k \Delta t<t\le (k+1)\Delta t$ for some non-negative integer $k \ge 0$. With the exact same arguments as in Theorem \ref{thm:stoccont} we can prove the following Lemma.
\begin{lemma}
For any $\Delta t>0$, the process $X^n_{\Delta t}(t)$ is stochastically continuous.
\end{lemma}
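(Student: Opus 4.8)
The plan is to reproduce the argument of Theorem~\ref{thm:stoccont} essentially verbatim, now with the polygonal interpolant $X^n_{\Delta t}(t)$ in place of $X^n(t)$; this is exactly what the sentence preceding the statement announces. First I would fix $t>0$ and choose $h\in(0,1)$ small enough that $t-h>0$. Using the integral representation that defines $X^n_{\Delta t}$, the increment over $(t-h,t]$ can be written as
\begin{equation*}
X^n_{\Delta t}(t)-X^n_{\Delta t}(t-h)=\int_{t-h}^{t}b_n\big(X^n_{\Delta t}(\eta_{\Delta t}(z)-)\big)\,dz+\int_{t-h}^{t}\int_{\R^5}g_n\big(s,u,X^n_{\Delta t}(\eta_{\Delta t}(z)-)\big)\,\wN_{\oGamma}(dz,dsdu),
\end{equation*}
so the task reduces to bounding the second moment of the right-hand side by $Ch$.

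Next I would estimate the two terms separately. The drift term contributes $O(h^2)$ because $b_n$ is bounded. For the compensated Poisson integral I would apply Kunita's first inequality (\cite[Theorem~4.4.23]{Applebaum}) with exponent $2$, together with items~(i) and~(iii) of Lemma~\ref{lem:Lip}: since $|g_n(s,u,x)|\le M_n(s,u)$ uniformly in $x$ and $\int_{\R^5}|M_n(s,u)|^2\,\nu_4(dsdu)<+\infty$, one obtains $\E\big[\sup_{v\in[t-h,t]}\big|\int_{t-h}^{v}\int_{\R^5}g_n\,\wN_{\oGamma}\big|^2\big]\le Ch\int_{\R^5}|M_n|^2\,\nu_4\le Ch$. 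Combining the two bounds gives $\E[|X^n_{\Delta t}(t)-X^n_{\Delta t}(t-h)|^2]\le Ch$.

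To finish, I would let $h\downarrow 0$. The process $X^n_{\Delta t}$ is c\`adl\`ag (it is built from Lebesgue integrals in $dz$ and integrals against $\wN_{\oGamma}$, with $\eta_{\Delta t}$ having only grid-point discontinuities), so the left limit $X^n_{\Delta t}(t-)$ exists and $X^n_{\Delta t}(t-h)\to X^n_{\Delta t}(t-)$ almost surely; Fatou's lemma then yields $\E[|X^n_{\Delta t}(t)-X^n_{\Delta t}(t-)|^2]=0$, hence $\bQ(X^n_{\Delta t}(t-)=X^n_{\Delta t}(t))=1$. I do not expect a real obstacle: the only superficial difference from Theorem~\ref{thm:stoccont} is that the integrands are now frozen at the grid time $\eta_{\Delta t}(z)$ rather than evaluated at $z-$, but since the bounds $\norm{b_n}_\infty$ and $M_n$ of Lemma~\ref{lem:Lip}(i) are independent of the spatial argument, all the estimates go through unchanged.
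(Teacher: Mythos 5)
Your proposal is correct and follows exactly the route the paper intends: the paper proves this lemma by invoking ``the exact same arguments as in Theorem \ref{thm:stoccont}'', which is precisely what you carry out — the $O(h^2)$ drift bound from boundedness of $b_n$, Kunita's first inequality with Lemma \ref{lem:Lip}(i) and (iii) for the compensated Poisson integral, and Fatou's lemma to conclude. Your remark that freezing the integrand at $\eta_{\Delta t}(z)$ is harmless because the bounds are uniform in the spatial argument is exactly the observation that makes the transfer work.
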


Now, for $m \in \N$, let us define $X^n_m(t):=X^n_{\Delta t_m}$ where $\Delta t_m=\frac{\delta t}{m^2}$ for some constant $\delta t>0$. It will be clear, in the following, that all the arguments are independent of the choice of $\delta t$, hence we can set $\delta t=1$ for simplicity. We want to prove the following theorem, along the lines of \cite[Theorem 2.3]{Gyongy}.

\begin{theorem} \label{thm:QuasiGyongy}
For any $\theta<\frac{1}{4}$ and $T>0$ there exists a random variable $\zeta_{\theta,T}>0$ such that $\bQ(\zeta_{\theta,T}<\infty)=1$ and
\begin{equation}\label{eq:supremum}
    \sup_{t \in [0,T]}|X^n(t)-X^n_m(t)|\le \zeta_{\theta,T}m^{-\theta}, \ \forall m \in \N
\end{equation}
almost surely.
\end{theorem}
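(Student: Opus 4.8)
The plan is to follow the scheme of \cite[Theorem~2.3]{Gyongy}: first prove an $L^p$ rate of convergence of the polygonal Euler process $X^n_m$ to the localized solution $X^n$ of \eqref{eq:local}, uniformly on $[0,T]$ and with an explicit negative power of $m$; then upgrade this to the almost sure bound \eqref{eq:supremum} by a Chebyshev/Borel--Cantelli argument, the resulting (finite, random) constant playing the role of $\zeta_{\theta,T}$.

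The ingredients that make the localized problem tractable are already in place: $b_n$ is bounded and Lipschitz, and $g_n$ obeys items (i)--(iii) of Lemma~\ref{lem:Lip} with $M_n,L_n\in L^p(\nu_4)$ for every $p\ge 1$ by Lemma~\ref{lem:moments}. Hence, by the estimates underlying \cite[Theorem~6.2.3]{Applebaum} together with Kunita's first inequality \cite[Theorem~4.4.23]{Applebaum}, one has $\sup_{t\in[0,T]}\E[|X^n(t)|^p]<\infty$ for every $p$, and the same holds for $X^n_m(t)$ uniformly in $m$. Freezing the coefficients on each interval $[\eta_{\Delta t_m}(z),z]$ and using once more the boundedness of $b_n$, Kunita's inequality and the integrability of $M_n$ against $\nu_4$, I would record the one-step regularity estimate
\[
\E\bigl[|X^n_m(z-)-X^n_m(\eta_{\Delta t_m}(z)-)|^p\bigr]\le C_p\,\Delta t_m=C_p\,m^{-2},\qquad z\in[0,T].
\]

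Next I would subtract the integral representations of $X^n$ and $X^n_m$,
\begin{align*}
X^n(t)-X^n_m(t)&=\int_0^t\bigl[b_n(X^n(z-))-b_n(X^n_m(\eta_{\Delta t_m}(z)-))\bigr]\,dz\\
&\quad+\int_0^t\int_{\R^5}\bigl[g_n(s,u,X^n(z-))-g_n(s,u,X^n_m(\eta_{\Delta t_m}(z)-))\bigr]\,\wN_{\oGamma}(dz,dsdu),
\end{align*}
split $|X^n(z-)-X^n_m(\eta_{\Delta t_m}(z)-)|\le|X^n(z-)-X^n_m(z-)|+|X^n_m(z-)-X^n_m(\eta_{\Delta t_m}(z)-)|$, and estimate the drift term with the Lipschitz bound for $b_n$ and the jump term with items (ii)--(iii) of Lemma~\ref{lem:Lip}, Kunita's first inequality and H\"older's inequality. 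This leads to
\begin{align*}
\E\Bigl[\sup_{s\le t}|X^n(s)-X^n_m(s)|^p\Bigr]&\le C\int_0^t\E\Bigl[\sup_{r\le z}|X^n(r)-X^n_m(r)|^p\Bigr]dz\\
&\quad+C\int_0^T\E\bigl[|X^n_m(z-)-X^n_m(\eta_{\Delta t_m}(z)-)|^p\bigr]dz,
\end{align*}
and, inserting the one-step bound and applying Gronwall's lemma, to $\E[\sup_{t\in[0,T]}|X^n(t)-X^n_m(t)|^p]\le C_{p,T}\,m^{-2}$. Then, fixing $\theta<\tfrac14$ and a suitable $p$, Chebyshev's inequality gives $\bQ(\sup_{t\in[0,T]}|X^n(t)-X^n_m(t)|>m^{-\theta})\le C_{p,T}\,m^{\theta p-2}$, which is summable over $m$; Borel--Cantelli then yields $\sup_{t\in[0,T]}|X^n(t)-X^n_m(t)|\le m^{-\theta}$ for all but finitely many $m$, almost surely. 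Setting $\zeta_{\theta,T}:=\sup_{m\in\N}m^{\theta}\sup_{t\in[0,T]}|X^n(t)-X^n_m(t)|$ then gives an a.s.\ finite random variable for which \eqref{eq:supremum} holds.

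The main obstacle is the $L^p$ error estimate, and inside it the jump contribution: since $(\gamma,\oGamma)$ has infinite activity one cannot rely on an It\^o isometry for $p>2$, so Kunita's first inequality must be used, producing both an $L^2(\nu_4)$ and an $L^p(\nu_4)$ term; it is exactly the finiteness of $\int_{\R^5}|M_n|^p\,\nu_4$ and $\int_{\R^5}|L_n|^p\,\nu_4$ guaranteed by Lemmas~\ref{lem:moments}--\ref{lem:Lip} that closes the estimate, and the careful tracking of the $p$-dependence of the constants (together with the admissible range of $p$ in the H\"older steps) is what pins down the threshold $\theta<\tfrac14$.
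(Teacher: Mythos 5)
Your proposal is correct and follows essentially the same route as the paper's proof: the one-step estimate $\E\left[|X^n_m(z)-X^n_m(\eta_m(z))|^p\right]\le C m^{-2}$ (with the linear-in-$\Delta t_m$ term from Kunita's inequality dominating), a Gr\"onwall argument yielding $\E\left[\sup_{t\in[0,T]}|X^n(t)-X^n_m(t)|^p\right]\le C m^{-2}$, and then Chebyshev plus Borel--Cantelli. The only cosmetic difference is that you apply Kunita's inequality directly to the difference at exponent $p$, whereas the paper first applies the It\^o formula to $|X^n-X^n_m|^2$ and then raises to the power $p$ via Jensen; this does not affect the substance (indeed your parametrization with $2\le p<1/\theta$ would even cover any $\theta<1/2$, so it is not the H\"older steps that force $\theta<\frac14$ --- that is simply the range the theorem asserts).
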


The proof of Theorem \ref{thm:QuasiGyongy} is provided in Appendix \ref{proof:QuasiGyongy}.

Now, we are ready to prove the convergence of the forward Euler-Maruyama scheme for our Heston-CIR model.
\begin{theorem}\label{thm:EMapprox}
Let
\begin{equation}\label{eq:EMapprox}
    X_m(t):=X_0+\int_0^t b_n(X_m(\eta_m(z)-))dz+\int_0^t \int_{\R^5}g_n(s,u,X_m(\eta_m(z)-))\wN_{\oGamma}(dt,dsdu).
\end{equation}
Then $X_m(t)$ is well defined up to a Markov time $\tau_m$ such that $\bQ(\tau_m>0)=1$. Moreover, for almost any $\omega \in \Omega$ and $T<\widetilde{\tau}^X(\omega)$ there exists $m_0 \in \N$ such that $\tau_m(\omega) > T$ for any $m \ge m_0$ and
\begin{equation}\label{eq:limitEM}
    \lim_{m \to +\infty}\sup_{t \in [0,T]}|X(t,\omega)-X_m(t,\omega)|=0.
\end{equation}
\end{theorem}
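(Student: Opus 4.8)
The plan is to transfer the (uniform, pathwise) convergence rate obtained for the localized schemes $X^n_m(t)$ in Theorem \ref{thm:QuasiGyongy} onto the non-localized Euler scheme $X_m(t)$ by a localization/stopping-time argument, exploiting the fact that, up to the exit time $\tau^X_n$, the localized process $X^n(t)$ coincides with the true solution $X(t)$, and analogously the localized Euler scheme coincides with the genuine one before it leaves $\left[\frac1n,n\right]$. First I would fix $\omega \in \Omega$ outside the null set where Theorem \ref{thm:QuasiGyongy} fails and where the paths $\gamma(\cdot,\omega)$, $\oGamma(\cdot,\omega)$, $X(\cdot,\omega)$ are càdlàg of bounded variation, and fix $T<\widetilde\tau^X(\omega)=\lim_{n}\tau^X_n(\omega)$. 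By definition of $\widetilde\tau^X$ there is $n_1\ge n_0$ with $\tau^X_{n_1}(\omega)>T$, so that on $[0,T]$ the true solution satisfies $X(t,\omega)\in\left[\frac{1}{n_1},n_1\right]$ and hence lies in the compact region where $\psi^{(5)}_{n}\equiv 1$ for every $n>n_1$; in particular $X(t,\omega)=X^n(t,\omega)$ for $t\in[0,T]$ and all such $n$.

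Next I would estimate, for a fixed large $n>n_1$, the distance between the true path and the Euler iterates $X^n_m(\cdot,\omega)$ of the localized scheme: Theorem \ref{thm:QuasiGyongy} gives, for any $\theta<\frac14$,
\[
\sup_{t\in[0,T]}|X^n(t,\omega)-X^n_m(t,\omega)|\le \zeta_{\theta,T}(\omega)\,m^{-\theta}
\]
for all $m$, with $\zeta_{\theta,T}(\omega)<\infty$. Combining with $X^n(\cdot,\omega)=X(\cdot,\omega)$ on $[0,T]$, one gets $\sup_{t\in[0,T]}|X(t,\omega)-X^n_m(t,\omega)|\le \zeta_{\theta,T}(\omega)m^{-\theta}$. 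Choosing $n$ large enough (say $n>2n_1$) forces every value $X^n_m(t,\omega)$, $t\in[0,T]$, to stay inside $\left[\frac{1}{2n_1},2n_1\right]\subset\left[\frac1n,n\right]$ once $m\ge m_0(\omega)$, because $X^n_m(t,\omega)$ is within $\zeta_{\theta,T}(\omega)m^{-\theta}$ of $X(t,\omega)\in\left[\frac{1}{n_1},n_1\right]$. On that event the localization multipliers $b_n,g_n$ agree with $b,g$ along the whole Euler recursion, so $X^n_m(t,\omega)=X_m(t,\omega)$ for $t\in[0,T]$, which simultaneously shows that the non-localized scheme $X_m$ is well defined past $T$, i.e. $\tau_m(\omega)>T$ for $m\ge m_0(\omega)$, and that $\sup_{t\in[0,T]}|X(t,\omega)-X_m(t,\omega)|\le \zeta_{\theta,T}(\omega)m^{-\theta}\to 0$, which is \eqref{eq:limitEM}. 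For the statement $\bQ(\tau_m>0)=1$ for each fixed $m$, I would argue separately by the same stochastic-continuity/bounded-variation considerations as in Theorem \ref{thm:stoccont} and the Lemma preceding Theorem \ref{thm:QuasiGyongy}: near $t=0$ the single Euler step keeps $X_m$ in a neighbourhood of $X_0\in\left[\frac{1}{n_0},n_0\right]$ almost surely.

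The delicate point is the circularity in the argument "choose $n$ so that $X^n_m$ stays in $\left[\frac1n,n\right]$": the bound from Theorem \ref{thm:QuasiGyongy} has a constant $\zeta_{\theta,T}$ that a priori depends on $n$, so one must make sure the threshold $m_0$ and the containment are consistent. The clean way is to first fix $n=n_2:=2n_1$ (or any convenient value $>n_1$), obtain $\zeta_{\theta,T}$ for that single $n_2$, then pick $m_0(\omega)$ with $\zeta_{\theta,T}(\omega)m_0^{-\theta}<\frac{1}{2n_1}-\frac{1}{n_2}\wedge(n_2-n_1)$ so that the $m^{-\theta}$-ball around $X(\cdot,\omega)$ stays inside $\left[\frac{1}{n_2},n_2\right]$; this removes the apparent circularity because $n_2$ is chosen before $m_0$. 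The remaining routine steps — that $X^n_m$ really equals $X_m$ on the good event (induction on the Euler steps), and that all pathwise integrals are legitimate Lebesgue–Stieltjes/compensated-Poisson integrals for a.e. $\omega$ — follow exactly as in the proof of the final theorem of Section \ref{Sec:Uniqueness} and in Theorem \ref{thm:stoccont}, and I would merely cite them rather than reprove them.
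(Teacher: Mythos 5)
Your proposal is correct and follows essentially the same route as the paper: localize via the exit times of $X$ and of the Euler scheme, invoke Theorem \ref{thm:QuasiGyongy} pathwise to force the localized scheme $X^n_m$ into a compact box containing the true path for all large $m$, and conclude that it then coincides with the non-localized scheme, giving both $\tau_m>T$ and \eqref{eq:limitEM}. The only refinement worth adopting from the paper is that it applies the bound on a \emph{rational} $T'$ with $T<T'<\widetilde{\tau}^X(\omega)$, which simultaneously avoids taking a union of null sets over uncountably many horizons (the exceptional set in Theorem \ref{thm:QuasiGyongy} depends on $T$) and upgrades the containment on $[0,T']$ to the strict inequality $\tau_m(\omega)\ge T'>T$, whereas working only on $[0,T]$ as you do yields a priori just $\tau_m\ge T$.
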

\begin{proof}
For any $n \in \N$ with $n \ge n_0$ and any $m \in \N$, let $$\tau_m^n:=\inf\left\{t>0: \ X_m(t) \not \in \left[\frac{1}{n},n\right]\right\}.$$
Then, by definition of $b_n$ and $g_n$, $X_m(t)=X_m^n(t)$ as $t<\tau_m^n$. Thus we can define $\tau_m=\lim_{n \to +\infty}\tau_m^n$. Now let \begin{align*}
    \Omega_0:=\{\omega \in \Omega: \mbox{ Inequality }&\mbox{\eqref{eq:supremum} holds for any rational $T$ and any $n \ge n_0$}\\&\mbox{and $\widetilde{\tau}^X(\omega)=\lim_{n}\tau^X_n(\omega)$}\}
\end{align*}
and observe that, clearly, $\bQ(\Omega \setminus \Omega_0)=0$. Fix $\omega \in \Omega_0$ and $T<\widetilde{\tau}^X(\omega)$. Then there exists a rational $T^\prime$ such that $T<T^\prime<\widetilde{\tau}^X(\omega)$. However, $\tau_n^X\uparrow \widetilde{\tau}^X$, hence there exists $N$ such that $T^\prime<\tau_N^X(\omega)<\widetilde{\tau}^X(\omega)$. By Inequality \eqref{eq:supremum} we have
\begin{equation}\label{eq:limit}
    \lim_{m \to +\infty}\sup_{t \in [0,T^\prime]}|X^{N+1}(t,\omega)-X^{N+1}_m(t,\omega)|=0.
\end{equation}
In particular, this means that for any $\varepsilon>0$ there exists $m_0$ such that for any $m \ge m_0$
\begin{equation}\label{eq:lesseps}
    \sup_{t \in [0,T^\prime]}|X^{N+1}(t,\omega)-X^{N+1}_m(t,\omega)|< \varepsilon.
\end{equation}
However, since $T^\prime<\tau_N^X(\omega)$, we know that $X^{N+1}(t,\omega) \in \left[\frac{1}{N},N\right]$ for any $t \in [0,T^\prime]$ and thus we can choose $\varepsilon>0$ so small that Inequality \eqref{eq:lesseps} implies $X^{N+1}_m(t,\omega) \in \left[\frac{1}{N+1},N+1\right]$ for any $t \in [0,T^\prime]$. This clearly implies that for any $m \ge m_0$ it holds $\tau_m(\omega) \ge T^\prime>T$. Finally, \eqref{eq:limitEM} is implied by \eqref{eq:limit} once we observe that for $t \in [0,T]$ it holds $X^{N+1}(t,\omega)=X(t,\omega)$ and $X^{N+1}_m(t,\omega)=X_m(t,\omega)$.
\end{proof}

\subsection {Simulation} \label{Sec:Simulation}
At this point, we use Theorem \ref{thm:EMapprox} to provide a simulation algorithm for the Heston-CIR model \eqref{HCL} on a grid of equidistant time points $0 \le t_0<t_1<\dots<t_N=T$ with $N \in \N$, assuming we assign the value $X(t_0)=X_0$. Precisely, let us set $\Delta t=\frac{T-t_0}{N}$ so that $t_j=t_0+j\Delta t$ for $0 \le j \le N$. Then we define $X_{\Delta t,j}$ by setting $X_{\Delta t,0}=X_0$ and then, for $0 \le j \le N-1$,
\begin{equation}\label{eq:recursEM}
X_{\Delta t,j+1} =X_{\Delta t,j}+b(X_{\Delta t,j})\Delta t+\langle g(1,1,X_{\Delta t,j}),(\Delta \gamma_j,\Delta \oGamma_j)\rangle.
\end{equation}
Let us stress that such a discrete-time process can be extended to a continuous time one as in equation \eqref{eq:EMapprox}, thus Theorem \ref{thm:EMapprox} guarantees that for $\Delta t$ small enough $(X_{\Delta t, j})_{0 \le j \le N}$ is almost surely a good approximation of the values of the process $X(t)$ in the nodes $t_0,\dots,t_{N-1},T$. Let us stress that for $0 \le i,j \le N-1$ with $i \not = j$, the quantities $\Delta \gamma_i$ and $\Delta \gamma_j$ are independent of each other and the same occurs for $\Delta \oGamma_i$ and $\Delta \oGamma_j$. To simulate $\Delta \gamma_j$, we just observe that these are Gamma-distributed random variables with scale parameter $\beta$ and shape parameter $\alpha \Delta t$. To simulate $\Delta \oGamma_j$, we observe that, thanks to the independence of the processes $\gamma(t)$ and $\boW(t)$, if we consider $Z=(Z_1,Z_2,Z_3,Z_4)$, where the $Z_i$'s are independent standard Gaussian random variables for $i = 1,...,4$, then $\Delta \oGamma_j \overset{d}{=} \sqrt{\Delta \gamma_j} Z$.\\
We can rewrite the recursive relation \eqref{eq:recursEM} in terms of the four components $S_{\Delta t}$, $V_{\Delta t}$, $r^d_{\Delta t}$ and $r^f_{\Delta t}$ of $X_{\Delta t}$, using also the previous observation, as follows
\begin{align*}
S_{\Delta t, j+1} &= S_{\Delta t,j} + S_{\Delta t,j} (r_{\Delta t,j}^d - r_{\Delta t,j}^f) \Delta \gamma_j + S_{\Delta t,j} \sqrt{V_{\Delta t,j}\, \Delta \gamma_j}\; H_{1,1}Z_1,\\
V_{\Delta t,j+1} &=V_{\Delta t,j} +\kappa_v (\theta_v - V_{\Delta t,j})\; \Delta\gamma_j + \sigma_v\sqrt{V_{\Delta t,j}\, \Delta\gamma_j} \; \phi_1,\\
r_{\Delta t,j+1}^d &= r_{\Delta t,j}^d+ \kappa_d (\theta_d - r_{\Delta t,j}^d) \ \Delta \gamma_j + \sigma_d \sqrt{r_{\Delta t,j}^d\, \Delta\gamma_j} \; \phi_2,\\
r_{\Delta t,j+1}^f &=  r_{\Delta t,j}^f + \left(\kappa_f  (\theta_f- r_{\Delta t,j}^f) - \sigma_f\rho_{sf} \sqrt{V_{\Delta t,j} r_{\Delta t,j}^f}\right)\Delta \gamma_j+ \sigma_f  \sqrt{r_{\Delta t,j}^f\,\Delta\gamma_j} \; \phi_3, 
\end{align*}
where
\begin{align*}
\phi_1 &=H_{2,1}Z_1+H_{2,2}  Z_2,\\
\phi_2 &=H_{3,1} Z_1+ H_{3,2}\; Z_2 + H_{3,3}\; Z_3,\\
\phi_3 &=H_{4,1} Z_1+ H_{4,2}\; Z_2 + H_{4,3}\; Z_3 + H_{4,4}\; Z_4
\end{align*} 
and $H_{i,j}$ are the components of the Cholesky factor of $\Sigma_{\bW}$, as in equation \eqref{eq:Choleskyfactor}.\\
At this point, we want to price American options numerically. Monte Carlo is a widespread method for option pricing as it can be used with any type of probability distribution. However, by construction, with Monte Carlo, it is difficult to derive the holding value  (or the continuation value)  at any time because that value depends on the unknown subsequent path. The research has dealt with the problem in several ways (see, for example, 
\cite{Margrabe, Carriere, Huang}, and references therein). Here we apply the Least Squares Monte Carlo simulation method (LSM) by Longstaff and Schwartz((\cite{Longstaff},\cite{Kavacs}, \cite{Samimi}), 
which consists of an algorithm for computing the price of an American option by stepping backward in time. Then, we compare the payoff from instant exercise with the expected payoff from continuation at any exercise time.  
Results are displayed in Figure ~\ref{F-1} where we show ten different paths of the asset price with parameters: $t_0=\delta t>0$, $T=5$, $\beta=0.5$,  $S_{t_0} = S_0=100$, $N=50$ time steps and $\Delta t=T-t_0/N$. 
In Figure ~\ref{F-2}, we display also ten simulated paths of the asset price under Heston-CIR of VG  L\'{e}vy type process for $t_0=\delta t>0$, $T=5$, $\beta=2$, $S_0=100$, $N=50$ time steps.

\begin{figure}[!htbp]
\centerline{\includegraphics[height=.27\paperheight]{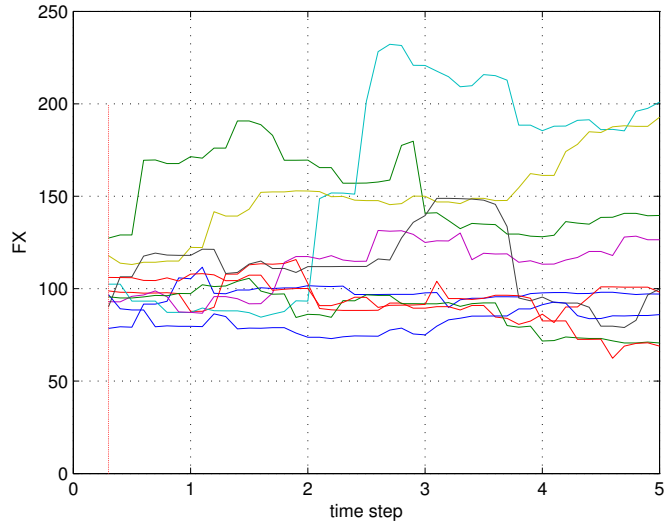}}
\caption{\small Simulated paths of the FX under Heston-CIR of VG  L\'{e}vy type process, with $T=5$, $\beta=0.5$,  and $S_0=100$.} \label{F-1} 
\end{figure}
\begin{figure}[!ht!]
\centerline{\includegraphics[height=.27\paperheight]{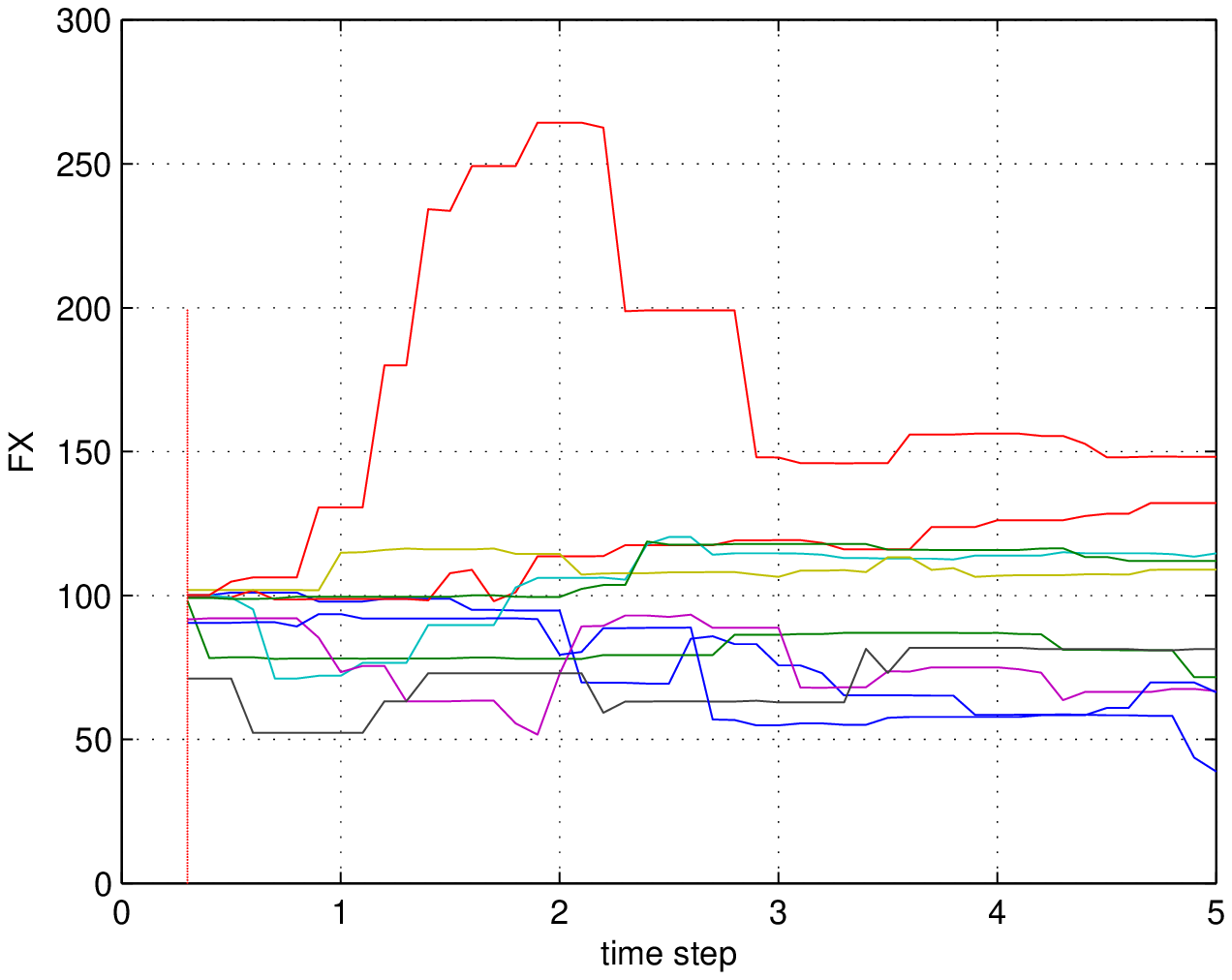}}
\caption{\small Simulated paths of the  FX under Heston-CIR of VG  L\'{e}vy tye process, with $T=5$, $\beta=2$, and $S_0=100$.} \label{F-2}
\end{figure}
%
In Figures ~\ref{F-3} and  ~\ref{F-4}, we have plotted the histogram of $S_t$ under Heston-CIR of VG  L\'{e}vy type process, with $\beta=0.5$ and $\beta=2$, respectively. The scale parameter $\beta$ controls the kurtosis of the distribution. Therefore, raising the parameter $\beta$ shifts mass to the tails.
%
\begin{figure}[h]
\centerline{\includegraphics[height=.25\paperheight]{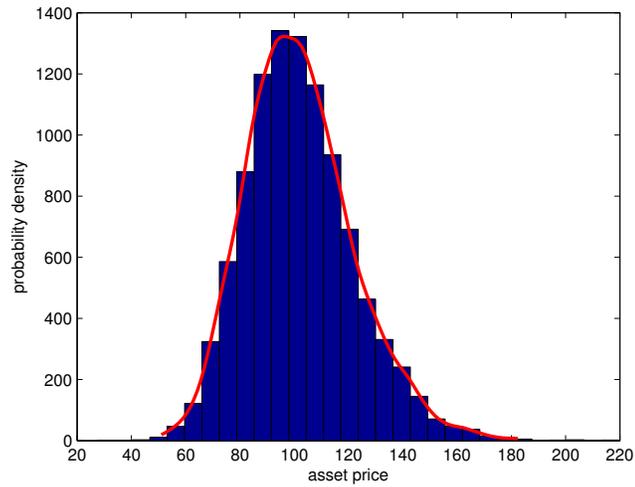}}
\caption{\small Histogram of the FX under the Heston-CIR of VG L\'{e}vy type model with $\beta=0.5$.} \label{F-3}
\end{figure}
\begin{figure}[h]
\centerline{\includegraphics[height=.25\paperheight]{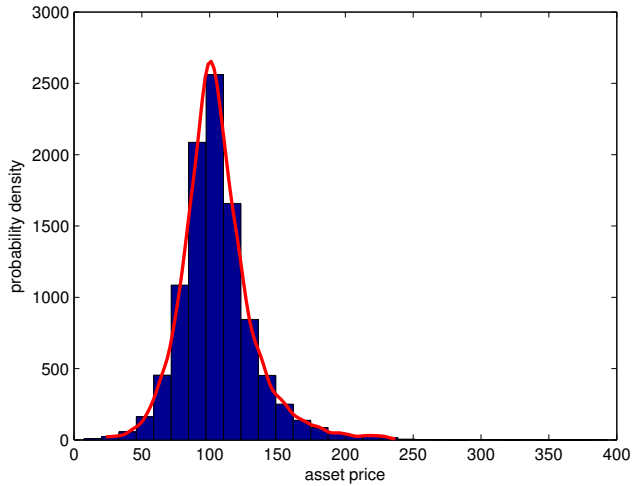}}
\caption{\small Histogram of the FX under the Heston-CIR of VG L\'{e}vy type model with $\beta=2$.} \label{F-4}
\end{figure}

%
\pagebreak
\bigskip\bigskip

\section{Dataset} \label{Sec:Dataset}

Concerning data, we retrieved interest rates and FX quotes from FRED, Federal Reserve Bank of St. Louis. In particular, the domestic rate is EURONTD156N  \cite{EURONTD156N} (Fig. \ref{Fig:EURONTD156N}), the foreign rate  is USDONTD156N \cite{USDONTD156N} (Fig. \ref{Fig:USDONTD156N}) and the FX is \cite{DEXUSEU}  (Fig. \ref{Fig:DEXUSEU}).


\begin{figure}[!htbp]
\centerline{\includegraphics[width=1\textwidth]{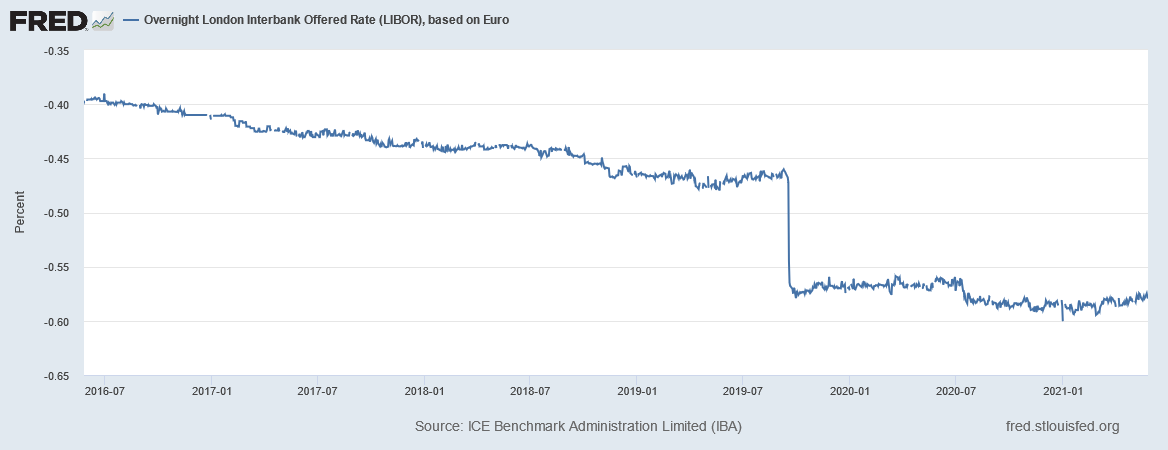}}
\caption{\small Overnight London Interbank Offered Rate (LIBOR), based on Euro [EURONTD156N]. Data from 2016-05-26 to 2021-05-26} \label{Fig:EURONTD156N} 
\end{figure}

\begin{figure}[!htbp]
\centerline{\includegraphics[width=1\textwidth]{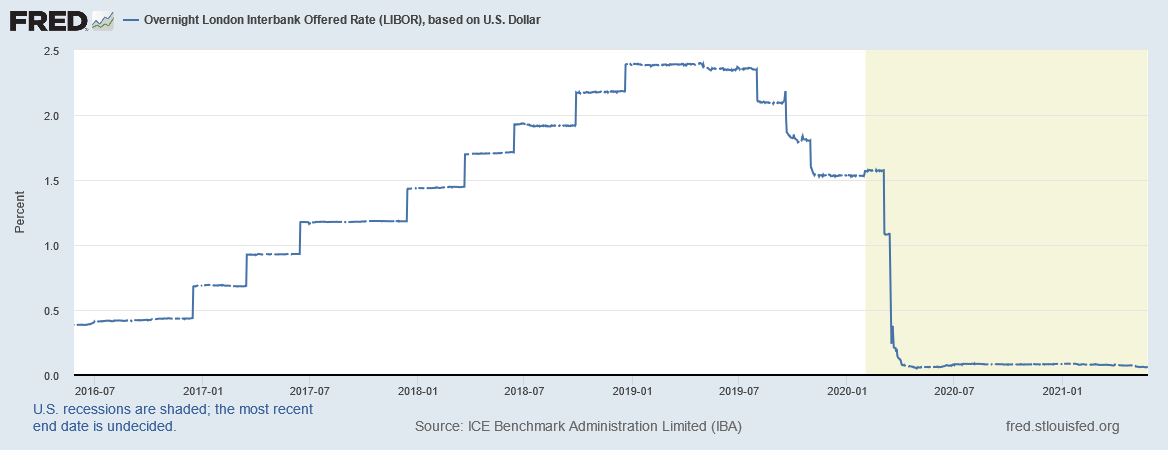}}
\caption{\small ICE Benchmark Administration Limited (IBA), Overnight London Interbank Offered Rate (LIBOR), based on U.S. Dollar [USDONTD156N]. Data from 2016-05-26 to 2021-05-26. The yellow strip to the right highlights the COVID-19 pandemic} \label{Fig:USDONTD156N} 
\end{figure}

\begin{figure}[!htbp]
\centerline{\includegraphics[width=1\textwidth]{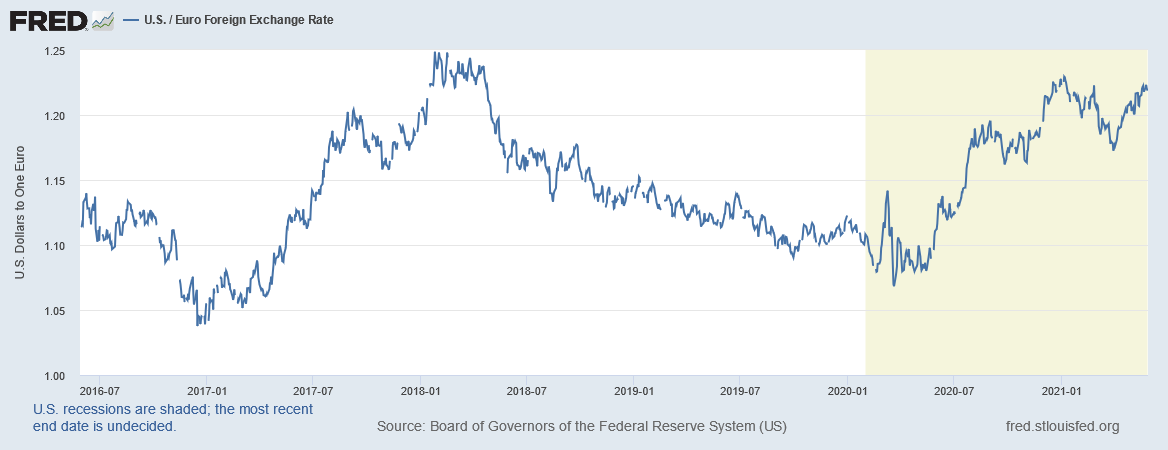}}
\caption{\small Board of Governors of the Federal Reserve System (US), U.S. / Euro Foreign Exchange Rate [DEXUSEU]. Data from 2016-05-26 to 2021-05-26. The yellow strip to the right highlights the COVID-19 pandemic}  \label{Fig:DEXUSEU} 
\end{figure}

American options' data were retrieved from the Chicago Mercantile Exchange (CME). Namely, on Wed, Jun 2nd, 2021, we have taken a snapshot of the following FX contracts: a)  Euro FX Sep '21 (E6U21), 36 days to expiration on 07/09/21, b) Euro FX Sep '21 (E6U21) 92 days to expiration on 09/03/21, c) Euro FX Dec '21 (E6Z21), 184 days to expiration on 12/03/21.

\section{Results} \label{Sec:NumericalExperiments}
Having discussed some stylized facts regarding returns and options' dynamics in Section \ref{Sec:StylFacts} concerning the VG models, in the following we report the results of our simulations on American options. 

Note that, while the proper risk-neutral simulations can be found in Madan et al. \cite{Madan1, Madan}, here we test our approach on real data.

%
%
To demonstrate the efficiency of the LSM method, we present some tests on American put options (see also \cite{Samimi, Broadie, Dorbov} and references therein) under Heston-CIR L\'{e}vy model. We assign the following values, $S_0=100$, $V_0=0.0275$, $r^d_0=0.0524$, $r^f_0=0.0291$, $\kappa_v=1.70$, $\kappa_d=0.20$,  $\kappa_f=0.32$, $\theta_v=0.0232$, $\theta_d=0.0475$, $\theta_f=0.0248$, $\sigma_v=0.150$, $\sigma_d=0.0352$, $\sigma_f=0.0317$, $\rho_{sv}=-0.1$ , $\rho_{sd}=-0.15$, $\rho_{sf}=-0.15$, $\rho_{vd}=0.12$  and $T=1$. The number of simulations is 10,000, the number of steps is $N=50$ and we have repeated these calculations 100 times. 
 Tables~\ref{T-1}, \ref{T-2} and \ref{T-3}  present the results, where put option price, kurtosis and skewness are listed in columns and $\beta$ varies across rows. Note that a VG process has fat tails. As shown, the $\beta$ parameter is linked to the kurtosis and skewness. Therefore, $\beta$  allows us to control skewness and kurtosis, to obtain the peak of the distribution.  Moreover, according to the simulations, the higher the strike price, the higher the price of the American put options.

\begin{table}[!ht!]
\centering
\caption{\small Comparison of an American put option, kurtosis and skewness for different $\beta$ values, under the Heston-CIR of VG L\'{e}vy tye model for the FX market. The strike price value is $E=95$.} \label{T-1}
\begin{tabular}{ c  c  c  c   }
\hline
   &   option price  &  kurtosis & skewness \\ \hline
 $\beta=0.01$   & 2.5392  &  3.3980  &  0.5009  \\ 
\hline
  $\beta=0.1$    & 2.8064  &  4.2689 & 0.6127  \\
 \hline
 $\beta=0.5$  &  4.5454 & 6.3111 &  0.8331 \\
 \hline 
 $\beta=1$  & 4.3301 & 9.5439 & 1.0831 \\
  \hline
 $\beta=2$  &  4.7478 & 8.6184 & 0.9495  \\
  \hline
 $\beta=3$  & 4.0028& 12.7592 & 0.8370  \\
 \hline
\end{tabular}
\end{table}
\begin{table}[!ht!]
\centering
\caption{\small Comparison of an American put option, kurtosis and skewness for different $\beta$ values, under Heston-CIR of VG L\'{e}vy tye model for the FX market. The strike price value is $E=100$.} \label{T-2}
\begin{tabular}{ c  c  c  c   }
\hline
   &   option price  &  kurtosis & skewness \\
\hline
 $\beta=0.01$& 2.3797 & 3.9047 & 0.5049 \\ 
\hline
  $\beta=0.1$& 3.0896 & 4.2724 & 0.6310  \\
 \hline
 $\beta=0.5$& 6.5128 & 7.1719 &  0.9343 \\
 \hline 
 $\beta=1$& 6.3791 & 10.3365 &  1.1680 \\
  \hline
 $\beta=2$& 6.0814 & 11.1584 & 1.0038 \\
  \hline
 $\beta=3$& 5.5649 & 17.0802 & 1.0214 \\
 \hline
\end{tabular}
\end{table}
\begin{table}[!ht!]
\centering
\caption{\small Comparison of an American put option, kurtosis and skewness for different $\beta$ values under Heston-CIR of VG L\'{e}vy tye model for the FX market. The strike price value is $E=105$.} \label{T-3}
\begin{tabular}{ c  c  c  c   }
\hline
   &   option price  &  kurtosis & skewness \\
 \hline
 $\beta=0.01$ & 6.7369& 4.0933 & 0.6317 \\ 
\hline
  $\beta=0.1$ &5.3028  & 4.6504 & 0.6902\\
 \hline
 $\beta=0.5$ & 9.4756 & 6.0042 & 0.7517 \\
 \hline 
 $\beta=1$ & 9.2740 & 7.9159 & 0.8873 \\
  \hline
 $\beta=2$ & 8.6605 & 11.7784  & 0.9051\\
  \hline
 $\beta=3$ & 8.7161 & 34.1771 & 2.3745\\
 \hline
\end{tabular}
\end{table}
\newpage
 In Table~\ref{T-4}, we outlined the comparison of American put option prices in the FX market estimated by the Heston-CIR model and the Heston-CIR of VG L\'{e}vy type model for different strike values $E$. As displayed, the L\'{e}vy process has kurtosis.
%
\begin{table}[!ht!]
\centering
\caption{\small Comparison of  American put option prices in the FX market by Heston-CIR model and Heston-CIR of VG L\'{e}vy model with different $E$.  The scale parameter value is $\beta=0.5$.} \label{T-4}
\begin{tabular}{ccccc}
\hline
    &  option price &  &  kurtosis &   \\
 \hline
 $  $   &  Heston-CIR  & L\'{e}vy &  Heston-CIR    & L\'{e}vy     \\ 
\hline
  $E=95$   & 2.3559 &4.5454 & 3.9467 & 6.2111  \\
 \hline
 $E=100$  & 3.7776 &   6.5128 &4.1502 & 10.3365    \\
 \hline 
 $E=105$  & 6.7438 & 9.4756  & 3.7244 & 6.0042 \\
  \hline
\end{tabular}
\end{table}

We employ $Euro\; FX\; Dec\; '21 \;(E6Z21)$ and $Euro\; FX\; Sep\; '21\; (E6U21)$ option prices to estimate the structure parameter of the Heston-CIR of VG L\'{e}vy type model by indirect inference. In Figures ~\ref{F-5}, ~\ref{F-6}, ~\ref{F-7}, and ~\ref{F-8} and Tables~\ref{T-6}, ~\ref{T-7}, ~\ref{T-8}, ~\ref{T-9}, ~\ref{T-10}, ~\ref{T-11}, ~\ref{T-12}, ~\ref{T-13} and ~\ref{T-14} we compare the price of American put and call options written on the $(E6Z21)$ and $(E6U21)$ with the corresponding estimated prices of the  Heston-CIR of VG L\'{e}vy type model, with maturity $T=182/365$ and $T=92/365$. The corresponding NRMSE (normalized root mean square error) has been also reported.  

 Madan and Seneta (1987) indicated a satisfactory fit of the VG process, using a  Chi-square goodness-of-fit test \cite{Madan2, Stephens}. The Chi-square goodness-of-fit test is a statistical tool used to evaluate whether data are suited to the considered model. We calculate the value of the Chi-square goodness of fit test using the following formula

\begin{align}
\boldsymbol{\chi} ^2=\sum _{i=1}^N \frac{(\mathbb{O}_i-\mathbb{E}_i)2}{\mathbb{E}_i}, \nonumber
\end{align}
where $N$ is the sample size, $\mathbb{O}_i$ are the observed counts and $\mathbb{E}_i$ are the expected counts. We apply the Chi-square goodness-of-fit test to the  FX market.
\begin{table}[!ht!]
\centering
\caption{\small Chi-square Goodness-of-fit Test Statistics for Daily Log-returns. } \label{T-5}
\begin{tabular}{|c| c|c|c|c|}

 \hline
 $ T $   &  $t_0$ & $S_0$ & Chi-square L\'{e}vy  & Chi-square Normal  \\ 
\hline
 1/12   & 2021-6-4 & 1.0245 & 0.1704 & 0.2321 \\
 \hline
 1      & 9-6-2019 & 1.183 & 0.039 &  0.6803  \\
 \hline 
 2  & 9-6-2019 & 1.183 & 0.0141 & 1.132\\
  \hline
  5  & 4-9-2016 & 1.239 & 0.4693 & 3.218 \\
  \hline
\end{tabular}
\end{table}
In Table~\ref{T-5} goodness-of-fit test statistics are stated. The Heston-CIR of VG L\'{e}vy  model fits the data for FX markets with low Chi-square test statistics, meaning that the distance between theoretical and empirical distributions is small.
%

\begin{table}[!ht!]
\centering
\caption{\small Comparison between estimated American put option prices with Heston-CIR of VG L\'{e}vy model and corresponding $FX$ market prices. $T=184/365$ and $\beta=0.5$.}  \label{T-6}
\begin{tabular}{ c  c  c  c  }
\hline
  &  simulated result  &  market price  & NRMSE \\ 
\hline
  $E=0.9$   & 4.2 $e^{-5}$
& 5 $e^{-5}$ & 2.719 $e^{-4}$\\
  \hline
  $E=0.92$   & 4.0801 $e^{-5}$  & 5 $e^{-5}$ & 2.908 $e^{-5}$ \\
  \hline
  $E=0.96$   & 4.782 $e^{-5}$  & 5 $e^{-5}$ & 2.9267 $e^{-4}$ \\
  \hline
  $E=0.98$   & 6.31 $e^{-5}$  & 5 $e^{-5}$ & 4.3435 $e^{-4}$ \\
  \hline
  $E=1.00$   & 1.0052 $e^{-4}$  & 1 $e^{-4}$ & 1.6836 $e^{-4}$ \\
  \hline
   $E=1.03$   & 1.6176 $e^{-4}$  & 1.5 $e^{-4}$ & 2.0751 $e^{-4}$ \\
  \hline
  $E=1.04$   & 1.83 $e^{-4}$  & 2 $e^{-4}$ & 2.8377 $e^{-4}$ \\
  \hline
   $E=1.05$   & 2.5126 $e^{-4}$  & 2.5 $e^{-4}$ & 4.1396 $e^{-4}$ \\
  \hline
\end{tabular}
\end{table}
%
\begin{table}[!ht!]
\centering
\caption{\small Comparison between estimated American put option prices with Heston-CIR of VG L\'{e}vy model and corresponding $FX$ market prices. $T=184/365$ and $\beta=0.5$.}  \label{T-7}
\begin{tabular}{ c  c  c  c  }
\hline
 $  $   &  simulated result  &  market price  & NRMSE \\ 
\hline
  $E=1.13$   & 0.0015  & 0.0015 & 2.225 $e^{-4}$ \\
  \hline
  $E=1.135$   & 0.0017  & 0.0017 &1.0581 $e^{-4}$ \\
  \hline
  $E=1.14$   & 0.002  & 0.0019 & 2.7412 $e^{-4}$ \\
  \hline
  $E=1.145$   &  0.0024 & 0.0021 & 1.6858 $e^{-4}$ \\
  \hline
  $E=1.15$   &  0.0025 & 0.0024 & 1.4501 $e^{-4}$ \\
  \hline
   $E=1.155$   & 0.0027  & 0.0028 & 1.6588 $e^{-4}$ \\
  \hline
  $E=1.16$   & 0.0031  & 0.0032 & 1.3906 $e^{-4}$ \\
  \hline
   $E=1.165$   & 0.0032  & 0.0035 & 1.902 $e^{-4}$ \\
  \hline
\end{tabular}
\end{table}
%

\begin{table}[!ht!]
\centering
\caption{\small Comparison between estimated American put option prices with Heston-CIR of VG L\'{e}vy model and corresponding $FX$ market prices. $T=184/365$ and $\beta=0.5$.}  \label{T-8}
\begin{tabular}{ c  c  c  c  }
\hline
   &  simulated result  &  market price  & NRMSE \\ 
\hline
  $E=1.39$   & 0.1634  & 0.1631 & 1.0619 $e^{-5}$ \\
  \hline
  $E=1.38$   & 0.1533  & 0.1532 &1.2562 $e^{-5}$ \\
  \hline
  $E=1.37$   & 0.1432  & 0.1433 & 3.0054 $e^{-6}$ \\
  \hline
  $E=1.36$   &  0.1333 & 0.1335 & 2.414 $e^{-5}$ \\
  \hline
  $E=1.35$   &  0.1234 & 0.1237 & 1.2835 $e^{-5}$ \\
  \hline
   $E=1.34$   & 0.1136  & 0.114 & 1.4544 $e^{-5}$ \\
  \hline
  $E=1.33$   & 0.1036  & 0.1043 & 1.4414 $e^{-5}$ \\
  \hline
   $E=1.32$   & 0.0937  & 0.0947 & 4.632 $e^{-5}$ \\
  \hline
\end{tabular}
\end{table}
%
\begin{table}[!ht!]
\centering
\caption{\small Comparison between estimated American put option prices with Heston-CIR of VG L\'{e}vy model and corresponding $FX$ market prices. $T=92/365$ and $\beta=0.5$.}  \label{T-9}
\begin{tabular}{ c  c  c  c  }
\hline
 $  $   &  simulated result  &  market price  & NRMSE \\ 
\hline
  $E=1.31$   & 0.0853  & 0.0853 & 7.7691 $e^{-6}$ \\
  \hline
  $E=1.3$   & 0.0754  & 0.0755 & 1.6754 $e^{-6}$ \\
  \hline
  $E=1.29$   & 0.0651  & 0.0659 & 3.7741 $e^{-5}$ \\
  \hline
  $E=1.285$   &  0.0606 & 0.0611 & 1.4728 $e^{-5}$ \\
  \hline
  $E=1.28$   &  0.0557 & 0.0564 & 1.1434 $e^{-5}$ \\
  \hline
   $E=1.275$   & 0.0507  & 0.0518 & 3.0421 $e^{-5}$ \\
  \hline
  $E=1.185$   & 0.0023  & 0.0023 & 1.9313 $e^{-4}$ \\
  \hline
   $E=1.18$   & 0.0027  & 0.0028 & 6.1121 $e^{-5}$ \\
  \hline
\end{tabular}
\end{table}
%

\begin{table}[!ht!]
\centering
\caption{\small Comparison between estimated American put option prices with Heston-CIR of VG L\'{e}vy model and corresponding $FX$ market prices. $T=92/365$ and $\beta=0.5$.}  \label{T-10}
\begin{tabular}{ c  c  c  c  }
\hline
 $  $   &  simulated result  &  market price  & NRMSE \\ 
\hline
  $E=1.39$   & 0.1647  & 0.1648 & 1.7288 $e^{-6}$ \\
  \hline
  $E=1.38$   & 0.1547  & 0.1548 &4.8363 $e^{-6}$ \\
  \hline
  $E=1.37$   & 0.1446  & 0.1448 & 1.1436 $e^{-5}$ \\
  \hline
  $E=1.36$   &  0.1348 & 0.1347 & 4.6565 $e^{-6}$ \\
  \hline
  $E=1.35$   &  0.1249 & 0.1249 & 6.4369 $e^{-6}$\\
  \hline
   $E=1.34$   & 0.1148  & 0.1149 & 3.411 $e^{-6}$ \\
  \hline
  $E=1.33$   & 0.1049  & 0.105 & 1.3209 $e^{-5}$ \\
  \hline
   $E=1.32$   & 0.0949  & 0.0951 & 1.1516 $e^{-5}$ \\
  \hline
\end{tabular}
\end{table}
%

\begin{table}[!ht!]
\centering
\caption{\small Comparison between estimated American call option prices with Heston-CIR of VG L\'{e}vy model and corresponding $FX$ market prices. $T=184/365$ and $\beta=0.5$.}  \label{T-11}
\begin{tabular}{ c  c  c  c  }
\hline
 $  $   &  simulated result  &  market price  & NRMSE \\ 
\hline
  $E=0.9$   & 0.3278  & 0.3274 & 3.4218 $e^{-5}$ \\
  \hline
  $E=0.91$   & 0.3170 & 0.3174 & 3.042 $e^{-5}$ \\
  \hline
  $E=0.92$   &  0.3074 & 0.3074 & 3.4581 $e^{-5}$ \\
  \hline
  $E=0.93$   & 0.2972  & 0.2974 & 4.1098 $e^{-5}$ \\
  \hline
  $E=0.94$   &  0.2871 & 0.2874 & 4.262 $e^{-5}$ \\
  \hline
   $E=0.95$   & 0.2778  & 0.2774 & 5.5471 $e^{-5}$ \\
  \hline
  $E=0.96$   &  0.2691 &0.2675  & 5.4698 $e^{-5}$ \\
  \hline
   $E=0.97$   & 0.2584  & 0.2575 & 5.3664 $e^{-5}$ \\
  \hline
\end{tabular}
\end{table}
%

\begin{table}[!ht!]
\centering
\caption{\small Comparison between estimated American call option prices with Heston-CIR of VG L\'{e}vy model and corresponding $FX$ market prices. $T=184/365$ and $\beta=0.5$.}  \label{T-12}
\begin{tabular}{ c  c  c  c  }
\hline
 $  $   &  simulated result  &  market price  & NRMSE \\ 
\hline
  $E=1.39$   & 5.5704 $e^{-4}$ & 5 $e^{-4}$ & 4.0065 $e^{-5}$ \\
  \hline
  $E=1.38$   & 7.8713 $e^{-4}$ & 7 $e^{-4}$ & 3.2333 $e^{-5}$ \\
  \hline
  $E=1.37$   & 8.5784 $e^{-4}$  & 8 $e^{-4}$  & 4.005  $e^{-5}$ \\
  \hline
  $E=1.36$   & 9.9123 $e^{-4}$  & 0.001 & 1.5721 $e^{-5}$ \\
  \hline
  $E=1.35$   & 0.0012 & 0.0012 & 5.2441 $e^{-5}$ \\
  \hline
   $E=1.34$   & 0.0014  & 0.0014 & 1.2596 $e^{-5}$\\
  \hline
  $E=1.33$   &  0.0016 & 0.0018 & 2.2165 $e^{-5}$\\
  \hline
   $E=1.32$   & 0.0021  & 0.0022 & 2.8563 $e^{-5}$ \\
  \hline
\end{tabular}
\end{table}
%
\begin{table}[!ht!]
\centering
\caption{\small Comparison between estimated American call option prices with Heston-CIR of VG L\'{e}vy model and corresponding $FX$ market prices. $T=92/365$ and $\beta=0.5$.}  \label{T-13}
\begin{tabular}{ c  c  c  c  }
\hline
 $  $   &  simulated result  &  market price  & NRMSE \\ 
\hline
  $E=0.9$   & 0.3259  & 0.3253 & 1.6764 $e^{-5}$\\
  \hline
  $E=0.91$   & 0.3159 & 0.3153 & 7.9192 $e^{-6}$ \\
  \hline
  $E=0.92$   &  0.3054 & 0.3053 & 4.9681 $e^{-6}$ \\
  \hline
  $E=0.93$   & 0.2956  & 0.2953 & 8.3867 $e^{-6}$ \\
  \hline
  $E=0.94$   &  0.2854 & 0.2853 & 1.0489 $e^{-5}$ \\
  \hline
   $E=0.95$   & 0.2752  & 0.2753 & 5.935 $e^{-6}$ \\
  \hline
  $E=0.96$   &  0.2653 &0.2653  & 8.2524 $e^{-6}$ \\
  \hline
   $E=0.97$   & 0.2553  & 0.2553 & 1.2934 $e^{-5}$\\
  \hline
\end{tabular}
\end{table}
%
\begin{table}[!ht!]
\centering
\caption{\small Comparison between estimated American call option prices with Heston-CIR of VG L\'{e}vy model and corresponding $FX$ market prices. $T=92/365$ and $\beta=0.5$.}  \label{T-14}
\begin{tabular}{ c  c  c  c  }
\hline
 $  $   &  simulated result  &  market price  & NRMSE \\ 
\hline
  $E=1.39$   & 1.1868 $e^{-4}$ & 1 $e^{-4}$ & 2.4010 $e^{-4}$ \\
  \hline
  $E=1.38$   & 1.7458 $e^{-4}$  & 1 $e^{-4}$ & 6.3155 $e^{-5}$ \\
  \hline
  $E=1.37$   & 1.8853 $e^{-4}$  & 1 $e^{-4}$  & 9.7082 $e^{-5}$ \\
  \hline
  $E=1.36$   & 1.5572 $e^{-4}$  & 1.5 $e^{-4}$ & 9.7195 $e^{-5}$ \\
  \hline
  $E=1.35$   & 1.8542 $e^{-4}$ & 2 $e^{-4}$ & 2.7133 $e^{-5}$ \\
  \hline
   $E=1.34$   & 2.2579 $e^{-4}$  & 2.5 $e^{-4}$& 1.0322 $e^{-5}$ \\
  \hline
  $E=1.33$   &  2.8955 $e^{-4}$ & 3 $e^{-4}$ & 1.868 $e^{-5}$\\
  \hline
   $E=1.32$   & 5.6845 $e^{-4}$  & 4.5 $e^{-4}$ & 2.9546 $e^{-5}$\\
  \hline
\end{tabular}
\end{table}
%

\begin{figure}[!htbp]
\centerline{\includegraphics[height=.27\paperheight]{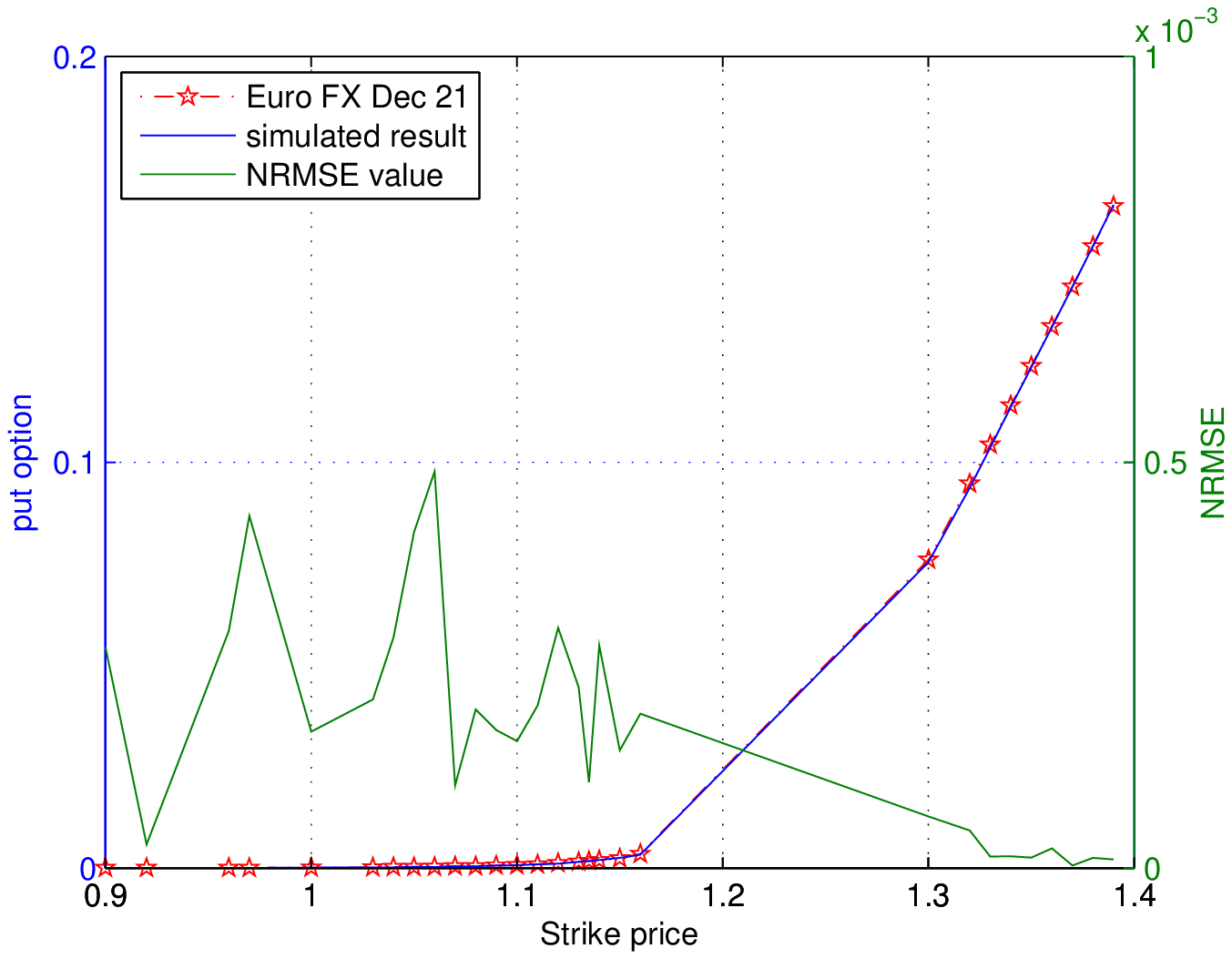}}
\caption{\small Simulated American put option prices  (blue line) and market values (red line) with maturity $T=184$ days.} \label{F-5}
\end{figure}
\begin{figure}[!htbp]
\centerline{\includegraphics[height=.27\paperheight]{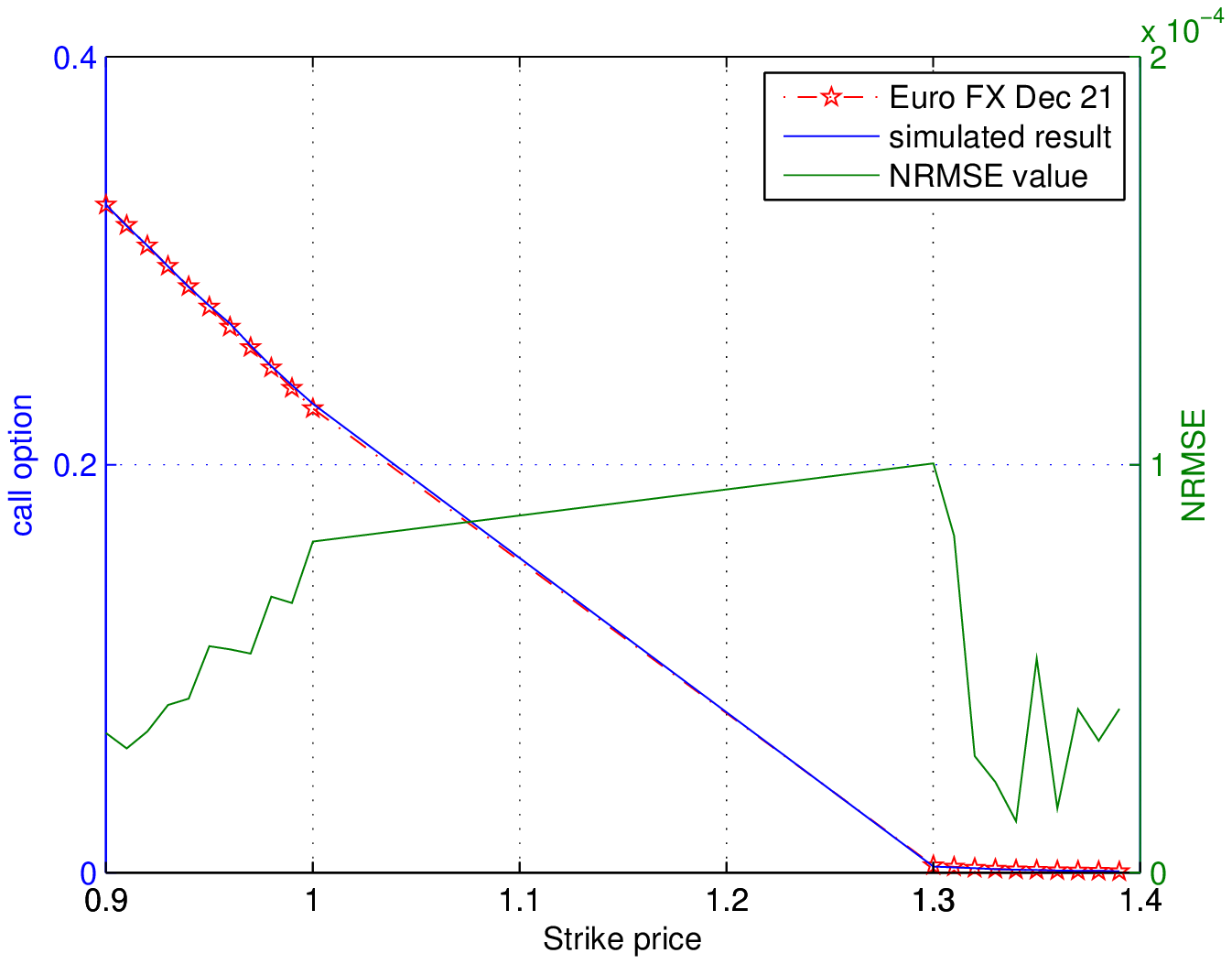}}
\caption{\small Simulated  American call option prices (blue line) and market values (red line) with maturity $T=184$ days.} \label{F-6}
\end{figure}
\begin{figure}[!htbp]
\centerline{\includegraphics[height=.27\paperheight]{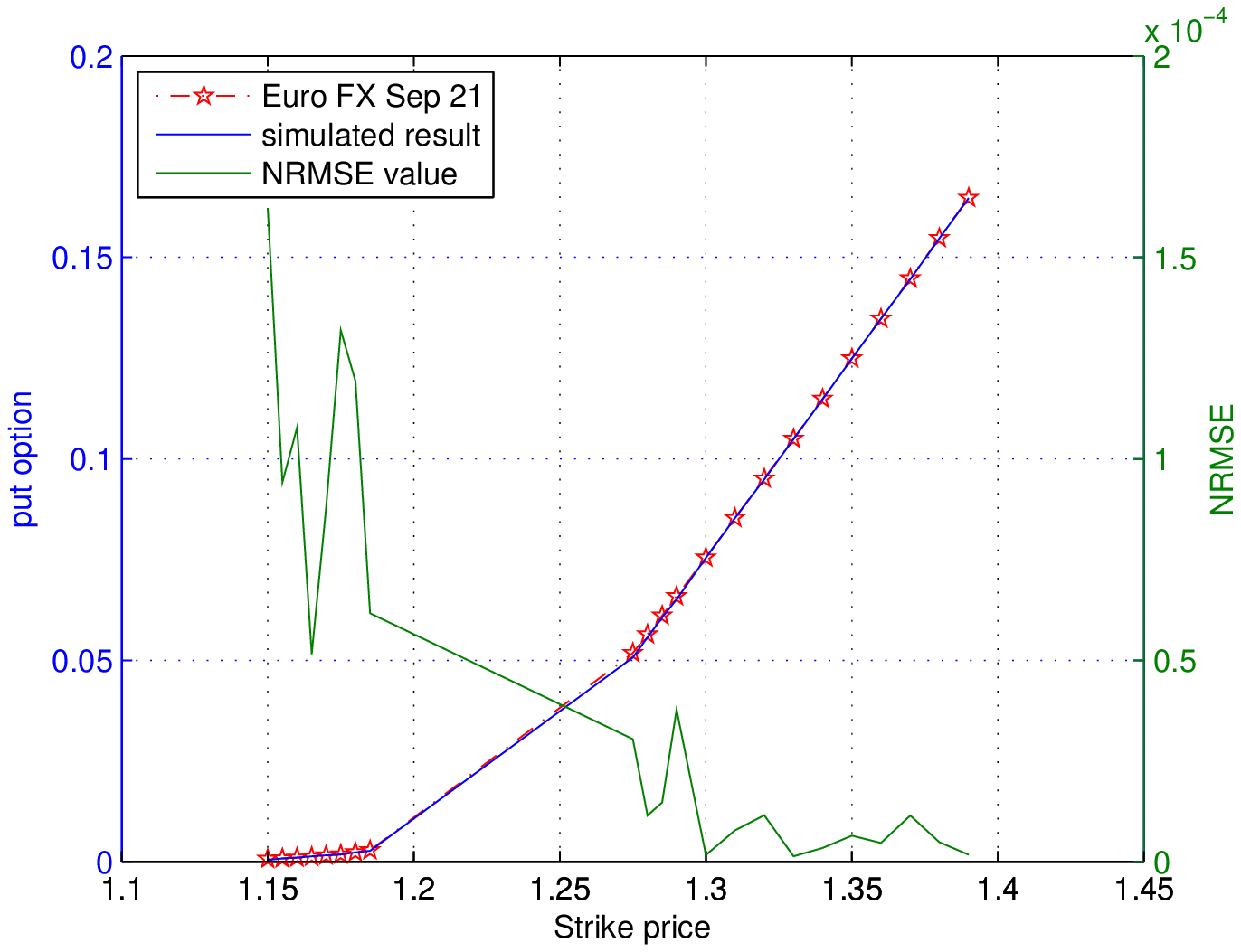}}
\caption{\small Simulated American put option prices (blue line) and market values (red line) with maturity $T=92$ days.} \label{F-7}
\end{figure}
\begin{figure}[!htbp]
\centerline{\includegraphics[height=.27\paperheight]{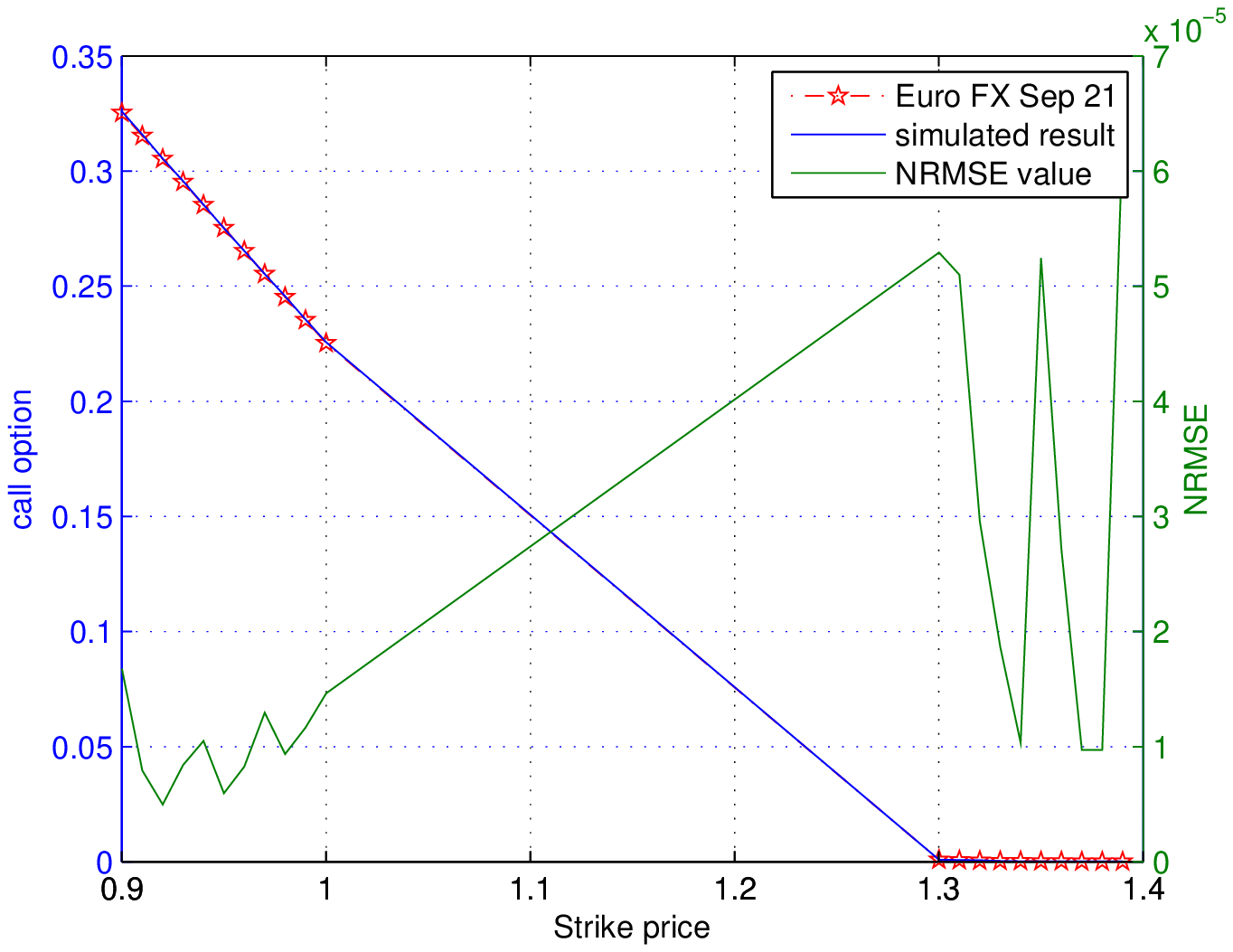}}
\caption{\small Simulated  American call option prices (blue line) and market values (red line) with maturity $T=92$ days.} \label{F-8}
\end{figure}
\begin{figure}[!htbp]
\centerline{\includegraphics[height=.27\paperheight]{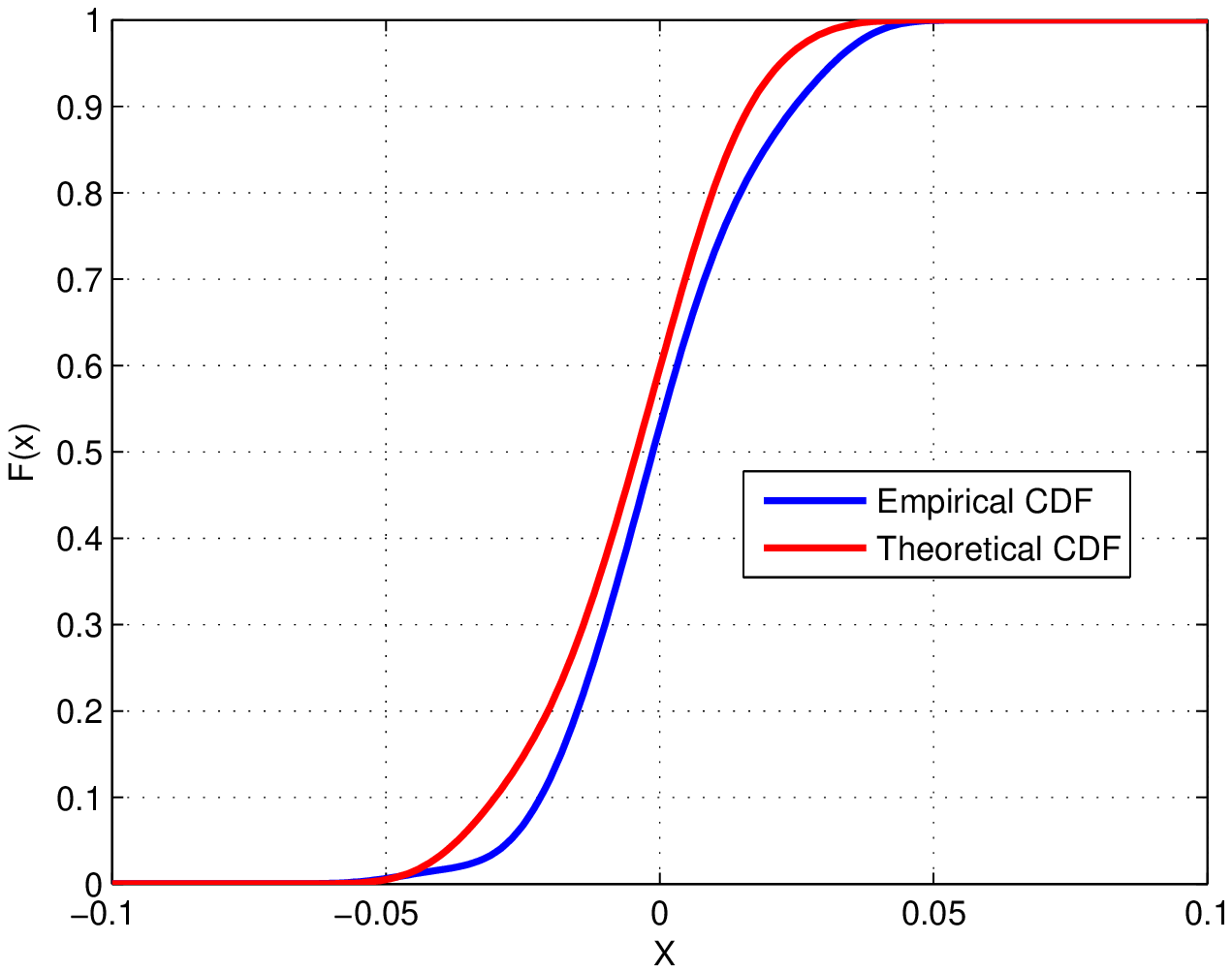}}
\caption{\small  The red line is an empirical CDF, and the blue line is a Market CDF. $T=5$ years.} \label{F-9}
\end{figure}
\begin{figure}[!htbp]
\centerline{\includegraphics[height=.27\paperheight]{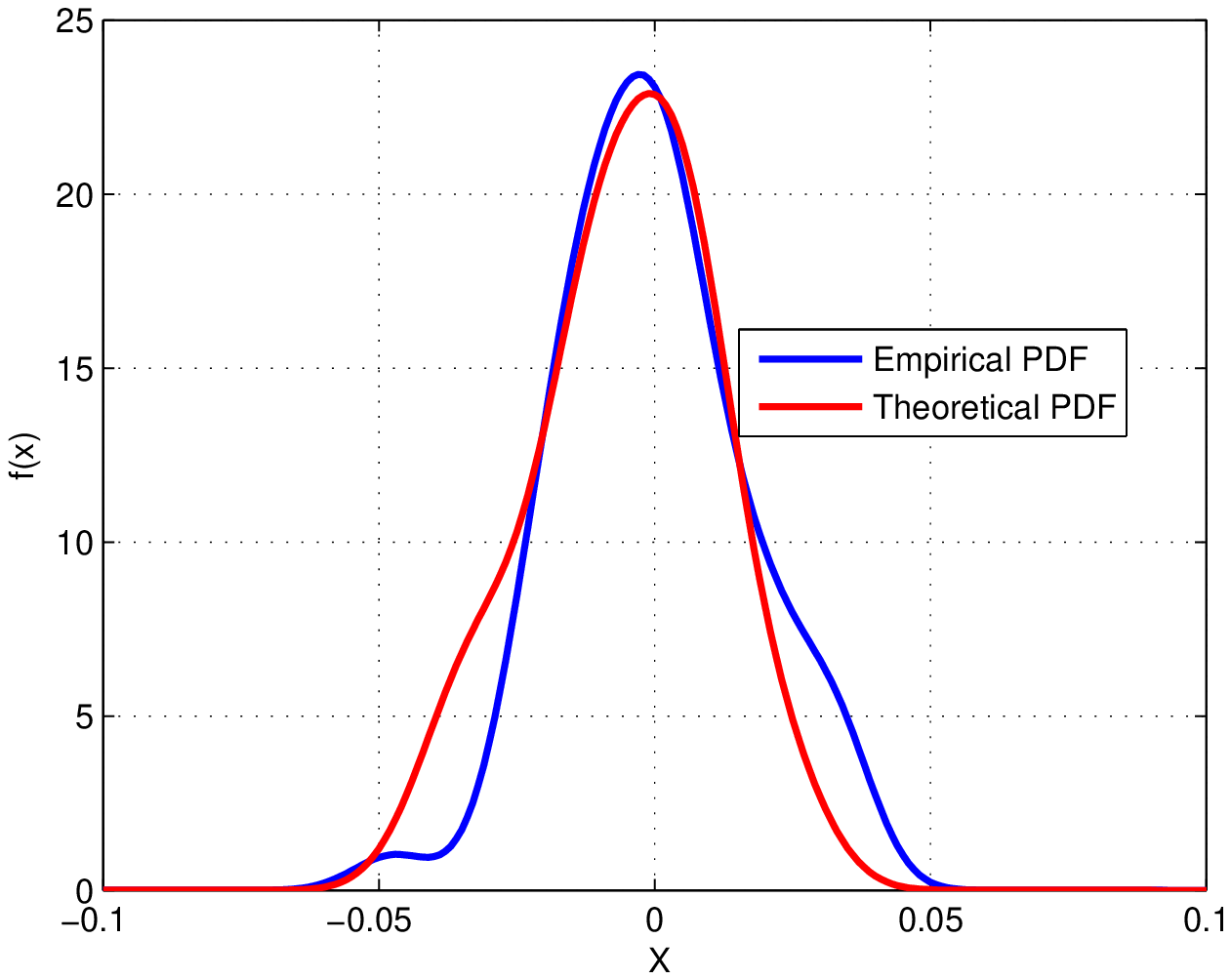}}
\caption{\small The red line is an empirical PDF, and the blue line is a Market PDF. $T=5$ years.} \label{F-10}
\end{figure}
%
\pagebreak
\newpage
\clearpage

\section{Conclusions} \label{Sec:Conclusions}
This paper, first, introduced Heston-CIR L\'{e}vy model for the FX market. The model has many positive features, for instance, through the parameters we can control tails, peaks and asymmetry. 
The proof of the strong convergence of the L\'{e}vy process with stochastic domestic short interest rates, foreign short interest rates and stochastic volatility was provided. 
Then the Euler-Maruyama discretization scheme was used to estimate the paths for this model. Finally, American put option for FX market under Heston-CIR L\'{e}vy process with the LSM method were priced and a test on real data was performed. The simulations prove that the chosen model fits well real-world data. \\\\

\appendix

\section{Proof of Lemma \ref{lem:Levy}}

\begin{proof} \label{proof:Levy}
Let us observe that the process $X(t):=(t,W_j(t))$ is a L\'evy process and $\gamma(t)$ is a subordinator independent of it. Then, by Phillip's subordination theorem (see \cite[Theorem 30.1]{Sato}), $(\gamma(t),\Gamma_j(t))=X(\gamma(t))$ is still a L\'evy process. Still by \cite[Theorem 30.1]{Sato}, we know that for any Borel set $A \subset \R^2\setminus \{0\}$ it holds
\begin{equation}\label{eq:Levy1}
    \nu(A)=\int_{0}^{+\infty}\bQ((s,W_j(s))\in A)g_\gamma(s)ds.
\end{equation}
Now let us consider any couple of Borel set $A_1 \subset \R\setminus \{0\}$ and $A_2 \subset \R \setminus \{0\}$ and let $A=A_1 \times A_2$. It is clear that
\begin{equation*}
    \bQ((s,W_j(s))\in A)=\bQ((s,W_j(s))\in (A_1 \cup [0,+\infty))\times A_2),
\end{equation*}
hence, without loss of generality, we can assume $A_1 \subset [0,+\infty)$. Observe that
\begin{equation*}
\bQ((s,W_j(s))\in A)=\bQ(s \in A_1)\bQ(W_j(s) \in A_2)=1_{A_1}(s)\int_{A_2}p_1(s,u)du.
\end{equation*}
Thus, Equation \eqref{eq:Levy1} becomes
\begin{align*}
    \nu_1(A)&=\int_{0}^{+\infty}1_{A_1}(s)\left(\int_{A_2}p_1(s,u)du\right)g_\gamma(s)ds\\
    &=\int_{A_1}\left(\int_{A_2}p_1(s,u)du\right)g_\gamma(s)ds\\
    &=\int_{A}p_1(s,u)g_\gamma(s)dsdu,
\end{align*}
concluding the proof.
\end{proof}

\section{Proof of Theorem \ref{thm:exunV0rd0}}

\begin{proof} \label{proof:exunV0rd0}

Let us just prove the existence and uniqueness statement for \eqref{eq:V0}, as the other one can be proven with the same rationale. First, we need to obtain the L\'evy-It\^o decomposition of $(\gamma(t),\Gamma_2(t))$. To do this, let us consider the Poisson random measure $N_{\Gamma_2}(t,dsdu)$ of the process $(\gamma(t),\Gamma_2(t))$, defined on $\R^2$ as
\begin{equation*}
    N_{\Gamma_2}(t,A)=\sum_{0 \le s \le t} 1_{A}((\Delta\gamma(s),\Delta \Gamma_2(s)) \in A), \ \forall A \in \cB(\R^2\setminus\{0\}),
\end{equation*}
where the summation makes sense since in any interval $[0,t]$ a L\'evy process admits countable (but possibly dense) jumps. Let also $\wN_{\Gamma_2}(dt,dsdu)=N_{\Gamma_2}(dt,dsdu)-\nu_1(dsdu)dt$ be the compensated Poisson random measure, where $\nu_1$ has been identified in Lemma \ref{lem:Levy}. Recalling that $\E[|(\gamma(t),\Gamma_2(t))|]<\infty$, we get the L\'evy-It\^o decomposition (see for instance \cite[Theorems 1.7-1.8]{Oksendal})
\begin{equation*}
    (\gamma(t),\Gamma_2(t))=\int_{\R^2}(s,u)\wN_{\Gamma_2}(t,dsdu).
\end{equation*}
This means that we can recast Equation \eqref{eq:V0} as follows.
\begin{equation}\label{eq:V1}
dV(t)=\kappa_v(a_v-V(t-))dt+\int_{\R^2}(\theta_vs+\sigma_vu\sqrt{V(t-)})\wN_{\Gamma_2}(dt,dsdu), \qquad V(0)=V_0>0.
\end{equation}
Now let us set, for $(x,s,u) \in \R^3$,
\begin{align*}
    b_V(x)&=\kappa_v(a_v-x)\\
    g_V(s,u,x)&=\theta_vs1_{\R^+}(s)+\sigma_vu1_{\R^+}(x)\sqrt{x},
\end{align*}
and let us consider the auxiliary Equation
\begin{equation}\label{eq:V1aux}
dV(t)=b_V(V(t-))dt+\int_{\R^2}g_V(s,u,V(t-))\wN_{\Gamma_2}(dt,dsdu), \qquad V_0>0.
\end{equation}
By direct evaluation, one can check that for any $x \in \R$
\begin{equation*}
    2xb_V(x)+\int_{\R^2}g^2_V(s,u,x)\nu_1(dsdu)\le C(1+x^2),
\end{equation*}
for some constant $C>0$. Indeed,
\begin{equation*}
    xb_V(x)=\kappa_va_vx-\kappa_vx^2\le C(1+x^2).
\end{equation*}
On the other hand
\begin{align*}
    g_V^2(s,u,x)&=(\theta_v s 1_{\R^+}(s)+\sigma_v u 1_{\R^+}(x)\sqrt{x})^2\\
    &\le 2\theta_v^2s^2+2\sigma_v^2u^2|x|
\end{align*}
and then
\begin{align}\label{eq:inequ}
\begin{split}
    \int_{\R^2}&g_V^2(s,u,x)\nu_1(dsdu)\le 2\theta_v^2\int_{\R^2}s^2\nu_1(dsdu)+2\sigma_v^2|x|\int_{\R^2}u^2\nu_1(dsdu)\\
    &=2\alpha\theta_v^2\int_{\R^2}se^{-\beta s}ds+2\alpha\sigma_v^2|x|\int_{\R^+}s^{-1}e^{-\beta s}\left(\int_{\R} u^2 p_1(s,u)du\right)ds\\
    &=2\alpha\theta_v^2\int_{\R^2}se^{-\beta s}ds+2\alpha\sigma_v^2|x|\int_{\R^+}e^{-\beta s}ds\le C(1+|x|) \le C(1+x^2).
\end{split}
\end{align}
This guarantees that any solution of Equation \eqref{eq:V1aux} is non-explosive by means of Theorem \cite[Theorem 2.2]{Xi}, since $b_V$ and $g_V$ satisfy \cite[Assumption 2.1]{Xi}. One can also check, still by direct evaluation, that for any $x,y \in \R$ it holds
\begin{equation}\label{eq:nonLipcond}
    \int_{\R^2}|g_V(s,u,x)-g_V(s,u,y)|^2\nu_1(dsdu)\le C|x-y|.
\end{equation}
for a constant $C>0$.
Now let us consider a sequence $\psi^{(1)}_n \in C^\infty_c(\R)$ such that ${\rm supp}(\psi^{(1)}_n) \subset (-n-1,n+1)$, $\psi^{(1)}_n(x)=1$ for any $x \in (-n,n)$ and $0 \le \psi_n(x) \le 1$ for any $x \in  \R$. Set, for any $(s,u,x) \in \R^3$, $b_n(x)=\psi^{(1)}_n(x) b_V(x)$ and $g_n(x)=\psi^{(1)}_n(x)g_V(s,u,x)$. It is clear that it still holds
\begin{equation}\label{eq:nonexp}
    2xb_n(x)+\int_{\R^2}g^2_V(s,u,x)\nu_1(dsdu)\le C(1+x^2),
\end{equation}
for some constant $C>0$. Let us also stress that, by Inequality \eqref{eq:inequ},
\begin{equation}\label{eq:sublingr}
    \int_{\R^2}g^2_V(s,u,x)\nu_1(dsdu)\le C(1+|x|),
\end{equation}
for a suitable constant $C>0$. In particular, Inequality \eqref{eq:nonexp} implies that if we consider the Equation
\begin{equation}\label{eq:V1auxn}
dV_n(t)=b_n(V_n(t-))dt+\int_{\R^2}g_n(s,u,V_n(t-))\wN_{\Gamma_2}(dt,dsdu), \qquad V_n(0)=V_0>0,
\end{equation}
its strong solutions are non-explosive. Now let us show that Equation \eqref{eq:V1aux} admits at least a weak solution. For any function $f \in C^2_b(\R)$, we can consider the operator
\begin{equation*}
    \cL_nf(x)=f'(x)b_n(x)+\int_{\R^2}(f(x+g_n(s,u,x))-f(x)-f'(x)g_n(s,u,x))\nu_1(dsdu).
\end{equation*}
Let us consider, for any $x \in \R$ and any Borel set $B \in \cB(\R)$,
\begin{equation*}
M(B,x)=\nu_1(\{(s,u) \in \R^2: \ g_n(s,u,x) \in B\}),
\end{equation*}
so that, by the change of variable formula (see \cite[Theorem 3.6.1]{Bogachev}),
\begin{equation*}
    \cL_nf(x)=f'(x)b_n(x)+\int_{\R^2}(f(x+y)-f(x)-f'(x)y)M(dy,x).
\end{equation*}
Now let $f \in C_b(\R)$ and observe that, by the change of variables formula and Inequality \eqref{eq:sublingr}, we have
\begin{equation*}
\int_{\R^2}\frac{|y|^2}{1+|y|^2}f(y)M(dy,x)=\int_{\R^2}\frac{|g_n(s,u,x)|^2}{1+|g_n(s,u,x)|^2}f(g_n(s,u,x))\nu_1(dsdu)\le C\left\| f \right\|_{\infty},
\end{equation*}
for some constant $C>0$. Thus we are under the hypotheses of \cite[Theorem 2.2]{Stroock} and we know that the martingale problem associated with $\cL_n$ admits a solution, which guarantees the existence of at least a weak solution of \eqref{eq:V1auxn} through \cite[Theorem 2.3]{Kurtz}. Now we need to show that strong solutions of \eqref{eq:V1auxn} are pathwise unique. To do this, let us first observe that for any $R>0$, $\delta_0 \in (0,1)$ and any $x,y \in \R$ such that $|x|,|y| \le R$ and $|x-y| \le \delta_0$ it holds
\begin{equation}\label{intcontr}
    \int_{\R^2}|g_n(s,u,x)-g_n(s,u,y)|^2\nu_1(dsdu)\le C_R |x-y|,
\end{equation}
where $C_R>0$ is a suitable constant independent of $\delta_0$.
Now consider a sequence $\{a_k\}_{k \in \N} \subset (0,1]$ such that $a_0=1$, $a_k<a_{k-1}$, $a_k \to 0$ as $k \to +\infty$ and $$\int_{a_k}^{a_{k-1}}\frac{1}{r}dr=\log\left(\frac{a_{k-1}}{a_k}\right)=k.$$
For such a sequence $\{a_k\}_{k \in \N}$, there exists a sequence of continuous functions $\rho_k:\R^+ \to \R$ such that ${\rm supp}(\rho_k) \subset (a_k,a_{k-1})$, $0 \le \rho_k(r) \le \frac{2}{kr}$ and
\begin{equation*}
    \int_{a_k}^{a_{k-1}}\rho_k(r)dr=1.
\end{equation*}
Let us further define the sequence of functions $\phi_k: \R \to \R$
\begin{equation*}
    \phi_k(r)=\int_0^{|r|}\int_0^v \rho_k(w)dwdv.
\end{equation*}
One can easily check that $\phi_k \in C^2(\R)$, ${\rm supp}(\phi_k) \subset \R \setminus (-a_k,a_k)$, $\phi'_k(r) \ge 0$ for any $r \ge 0$ and, for any $r \in \R$, $\phi'_k(r) \uparrow 1$ and $\phi_k(r) \uparrow |r|$ as $k \to +\infty$.
Now let us suppose we have two strong solutions $V_n(t)$ and $\widetilde{V}_n(t)$ of Equation \eqref{eq:V1auxn} and let
\begin{align*}
    \Delta(t)&=V_n(t)-\widetilde{V}_n(t)\\
    \ell^b_n(x)&=-\kappa_vx\\
    \ell^g_n(s,u,x,y)&=g_n(s,u,x)-g_n(s,u,y).
\end{align*} 
Then it is clear that
\begin{equation*}
    \Delta(t)=\int_0^t \ell^b_n(\Delta(\tau-))d\tau+\int_0^t\int_{\R^2} \ell^g_n(s,u,V_n(\tau-),\wV_n(\tau-)\widetilde{N}_{\Gamma_2}(d\tau,dsdu).
\end{equation*}
By the It\^o formula for L\'evy-It\^o processes (see \cite[Theorem 4.4.7]{Applebaum}) we have
\begin{align*}
\phi_k(|\Delta(t)|)&=\int_0^t \frac{\phi_k'(|\Delta(\tau-)|)}{|\Delta(\tau-)|}\Delta(\tau-) \ell^b_n(\Delta(\tau-))d\tau\\
&+\int_0^t\int_{\R^2} (\phi_k(|\Delta(\tau-)+\ell^g_n(s,u,V_n(\tau-),\wV_n(\tau-))|)-\phi_k(|\Delta(\tau-)|))\widetilde{N}_{\Gamma_2}(d\tau,dsdu)\\
&+\int_0^t\int_{\R^2} \left(\phi_k(|\Delta(\tau-)+\ell^g_n(s,u,V_n(\tau-),\wV_n(\tau-))|)-\phi_k(|\Delta(\tau-)|)\right.\\
&\left.-\frac{\phi'_k(|\Delta(\tau-)|)}{|\Delta(\tau-)|}\Delta(\tau-)\ell^g_n(s,u,V_n(\tau-),\wV_n(\tau-))\right)\nu_1(dsdu)d\tau.
\end{align*}
Fix any $\delta_0 \in (0,1)$ and $R>0$ and define
\begin{align*}
    T_{\delta_0}&:=\inf\{t \ge 0: \ |\Delta(t)| \ge \delta_0\},\\
    \tau_{R}&:=\inf\{t \ge 0: \ |V_n(t)| \vee |\widetilde{V}_n(t)| \ge R\}.
\end{align*}
Taking the expectation and using the optional stopping theorem we get
\begin{align}\label{eq:mean1}
\begin{split}
&\E[\phi_k(|\Delta(t \wedge T_{\delta_0} \wedge \tau_R)|)=\E\left[\int_0^{t \wedge T_{\delta_0} \wedge \tau_R} \frac{\phi_k'(|\Delta(\tau-)|)}{|\Delta(\tau-)|}\Delta(\tau-) \ell^b_n(\Delta(\tau-))d\tau\right]\\
&\qquad +\E\left[\int_0^{t \wedge T_{\delta_0} \wedge \tau_R}\int_{\R^2} \left(\phi_k(|\Delta(\tau-)+\ell^g_n(V_n(\tau-),\wV_n(\tau-))|)-\phi_k(|\Delta(\tau-)|)\right.\right.\\
&\qquad \left.\left.-\frac{\phi'_k(|\Delta(\tau-)|)}{|\Delta(\tau-)|}\Delta(\tau-)\ell^g_n(s,u,V_n(\tau-),\wV_n(\tau-))\right)\nu_1(dsdu)d\tau\right].
\end{split}
\end{align}
Now let us recall that $\psi_k'(|\Delta_{\tau-}|) \ge 0$ while
\begin{equation*}
    \Delta(\tau-)\ell^b_n(\Delta(\tau-))=-\kappa_v(\Delta(\tau-))^2 \le 0,
\end{equation*}
hence
\begin{equation}\label{eq:est1}
    \E\left[\int_0^{t \wedge T_{\delta_0} \wedge \tau_R} \frac{\phi_k'(|\Delta(\tau-)|)}{|\Delta(\tau-)|}\Delta(\tau-) \ell^b_n(\Delta(\tau-))d\tau\right]\le 0.
\end{equation}
On the other hand, by Taylor's formula with integral remainder and recalling that $\phi''_k(|r|)=\rho_k(|r|)$, it holds
\begin{align*}
    &\phi_k(|\Delta(\tau-)+\ell^g_n(s,u,V_n(\tau-),\wV_n(\tau-))|)-\phi_k(|\Delta(\tau-)|)\\
    &\qquad -\frac{\phi'_k(|\Delta(\tau-)|)}{|\Delta(\tau-)|}\Delta(\tau-)\ell^g_n(s,u,V_n(\tau-))\\
    &=(\ell^g_n(s,u,V_n(\tau-),\wV_n(\tau-)))^2\\
    &\qquad \times \int_0^1 \rho_k(|\Delta(\tau-)+h\ell^g_n(s,u,V_n(\tau-),\wV_n(\tau-))|)(1-h)dh\\
    &\le \frac{2}{k}(\ell^g_n(s,u,V_n(\tau-),\wV_n(\tau-)))^2\\
    &\qquad \times \int_0^1 |\Delta(\tau-)+h\ell^g_n(s,u,V_n(\tau-),\wV_n(\tau-))|^{-1}(1-h)dh\\
    &\le \frac{2}{k}|\Delta(\tau-)|^{-1}(\ell^g_n(s,u,V_n(\tau-),\wV_n(\tau-)))^2.
\end{align*}
Taking the integral in $\nu_1(dsdu)$ and recalling that $\tau \le t \wedge T_{\delta_0} \wedge \tau_R$ we have
\begin{align}\label{eq:est2}
\begin{split}
    &\int_{\R^2}\left(\phi_k(|\Delta(\tau-)+\ell^g_n(s,u,V_n(\tau-),\wV_n(\tau-)))-\phi_k(|\Delta(\tau-)|)\right.\\
    &\qquad \left.-\frac{\phi'_k(|\Delta(\tau-)|)}{|\Delta(\tau-)|}\Delta(\tau-)\ell^g_n(s,u,V_n(\tau-),\wV_n(\tau-))\right)\nu_1(dsdu)\\
    &\le \frac{2}{k}|\Delta(\tau-)|^{-1}\int_{\R^2}(\ell^g_n(s,u,V_n(\tau-),\wV_n(\tau-)))^2\nu_1(dsdu)\le C_R\frac{2}{k},
    \end{split}
\end{align}
where we also used Inequality \eqref{intcontr}. Thus, applying Inequalities \eqref{eq:est1} and \eqref{eq:est2} to \eqref{eq:mean1}, we get
\begin{align*}
\E[\phi_k(|\Delta_{t \wedge T_{\delta_0} \wedge \tau_R}|)]\le C_R\frac{2}{k} \E[t \wedge T_{\delta_0}\wedge \tau_R].
\end{align*}
Taking the limit as $k \to +\infty$, recalling that $\psi_k(r) \uparrow |r|$, by the monotone convergence theorem we achieve 
\begin{align*}
\E[|\Delta_{t \wedge T_{\delta_0} \wedge \tau_R}|]=0.
\end{align*}
Now let us take $R \to +\infty$, recalling that $\tau_R \uparrow +\infty$ since both $V_n(t)$ and $\widetilde{V}_n(t)$ are non-explosive, so that
\begin{align*}
0=\E[|\Delta_{t \wedge T_{\delta_0}}|]\ge \delta_0 \bQ(T_{\delta_0} \le t)
\end{align*}
and thus $\bQ(T_{\delta_0} \le t)=0$. Being both $t>0$ and $\delta_0 \in (0,1)$ arbitrary, we achieve the desired pathwise uniqueness. By \cite[Theorem 2]{Barczy}, this guarantees the existence of a strong solution $V_n(t)$ to \eqref{eq:V1auxn}. Now let us define $\tau_n:=\inf \{t \ge 0: \ |V_n(t)| \ge n\}$. By definition of $\psi_n^{(1)}$ and by the pathwise uniqueness of \eqref{eq:V1auxn}, it is clear that $\tau_n$ is the first exit time from $(-n,n)$ of any $V_m(t)$ such that $m \ge n$.
Hence we can define the following process
\begin{equation*}
    V(t):=V_n(t), \ t \le \tau_n.
\end{equation*}
Again, by definition of $b_n$ and $g_n$, such a process solves \eqref{eq:V1aux} up to $\tau_n$ for any $n \in \N$. Let us stress that we can use the same arguments as before to guarantee pathwise uniqueness for the strong solutions of Equation \eqref{eq:V1aux} and $\tau_n=\inf\{t \ge 0: \ |V(t)| \ge n\}$. However, we already proved that the solutions of Equation \eqref{eq:V1aux} are non-explosive, thus $\tau_n \uparrow +\infty$ as $n \to +\infty$, implying that $V(t)$ is the pathwise unique global strong solution of \eqref{eq:V1aux}. Now let $\tau^V_0:=\inf\{t \ge : \ V(t) < 0\}$ and observe that, for $t \le \tau^V_0$, we can rewrite \eqref{eq:V1aux} as \eqref{eq:V1}. Recalling that $V(t)$ is a c\'adl\'ag process, it is clear that $\bQ(\tau_0>0)=1$. Hence $V(t)$ is a strong solution of \eqref{eq:V1} up to $\tau_0$. Finally, to guarantee pathwise uniqueness of $\eqref{eq:V1}$, let us consider any other strong solution $\widetilde{V}(t)$ of \eqref{eq:V1} up to a stoppin time $\widetilde{\tau}_0$ with $\bQ(\widetilde{\tau}_0>0)=1$, let us consider the global solution $V(t)$ of \eqref{eq:V1aux}. Then, both $\widetilde{V}(t)$ and $V(t)$ are solutions of \eqref{eq:V1aux} up to $\widetilde{\tau}_0$ and thus, by pathwise uniqueness, they coincide. This also implies that $\widetilde{\tau}_0\le \tau^V_0$, concluding the proof.
\end{proof}


\section{Proof of Theorem \ref{thm:exunrf0}}

\begin{proof} \label{proof:exunrf0}
If $\rho_{sf}=0$, then existence and uniqueness is proven analogously as in Theorem \ref{thm:exunV0rd0}.\\

Let us consider the case $\rho_{sf} \not =0$. As in Theorem \ref{thm:exunV0rd0}, let us find the L\'evy-It\^o decomposition of $(\gamma(t),\wGamma_1(t),\wGamma_2(t))$. Indeed, if we consider its Poisson random measure, defined for $t \ge 0$ and $A  \in \cB(\R^3 \setminus \{0\})$ as
\begin{equation*}
    N_{\wGamma}(t,A)=\sum_{0 \le s \le t}1_A((\Delta \gamma(s), \Delta \wGamma_1(s), \Delta \wGamma_2(s))),
\end{equation*} 
and its compensated version $\wN_{\wGamma}(dt,dsdu)=N_{\wGamma}(dt,dsdu)-\nu_2(dsdu)dt$, where $\nu_2(dsdu)$ is the L\'evy measure of $(\gamma(t),\wGamma_1(t),\wGamma_2(t))$ identified by Lemma \ref{lem:Levy2}, it holds
\begin{align*}
    (\gamma(t),\wGamma^{(1)}(t),\wGamma^{(2)}(t))=\int_{\R^3}(s,u_1,u_2)\wN_{\wGamma}(t,dsdu)
\end{align*}
and then we can rewrite Equations \eqref{eq:V0R} and \eqref{eq:rf0R} as
\begin{align}
    dV(t)&=\kappa_v(a_v-V(t-))dt+\int_{\R^3}(\theta_vs+\sigma_vu_1 \sqrt{V(t-)})\wN_{\wGamma}(dt,dsdu), & V_0>0 \label{eq:V0R1}\\
    \begin{split}
    dr^f(t)&=(\kappa_f(a_f-r^f(t-))-\sigma_f \rho_{sf} \sqrt{V(t-)r^f(t-)})dt\\
    &+\int_{\R^3}(\theta_fs+\sigma_f\rho_{vf}u_1 \sqrt{r^f(t-)}+\sigma_f\sqrt{1-\rho^2_{vf}}u_2\sqrt{r^f(t-)})\wN_{\wGamma}(dt,dsdu)\end{split} & r^f_0>0. \label{eq:rf0R1}
\end{align}
Now let $b_V$ and $g_V$ be as in the proof of Theorem \ref{thm:exunV0rd0} and define, for $s \in \R$ and $u,x \in \R^2$,
\begin{align*}
    b_{r^f}(x)&=\kappa_f(a_f-x_2)-\sigma_f \rho_{sf} \sqrt{x_1x_2}1_{\R^+}(x_1 \wedge x_2),\\
    g_{r^f}(s,u,x_2)&=(\theta_fs1_{\R^+}(s)+\sigma_f\rho_{vf}u_1\sqrt{x_2}1_{\R^+}(x_2)+\sigma_f\sqrt{1-\rho^2_{vf}}u_2\sqrt{x_2}1_{\R^+}(x_2)),\\
    b_{V,r^f}(x)&=(b_V(x_1) ,b_{r^f}(x)), \qquad g_{V,r^f}(s,u,x)= (g_V(s,u_1,x_1), g_{r^f}(s,u,x_2)).
\end{align*}
Consider the auxiliary equations
\begin{align}
    dV(t)&=b_V(V(t-))dt+\int_{\R^3}g_V(s,u_1,V(t-))\wN_{\wGamma}(dt,dsdu), & V(0)=V_0>0 \label{eq:V0R11}\\
    \begin{split}
    dr^f(t)&=b_{r^f}(V(t-),r^f(t-))dt\\
    &+\int_{\R^3}g_{r^f}(s,u,r^f(t-))\wN_{\wGamma}(dt,dsdu)\end{split} & r^f(0)=r^f_0>0. \label{eq:rf0R11}
\end{align}
As in Theorem \ref{thm:exunV0rd0}, one can prove by direct evaluation (and by means of Young's inequality with exponents $p=\frac{4}{3}$ and $q=4$ to handle $x_2\sqrt{x_1x_2}$) that for any $x \in \R^2$ it holds
    
    \begin{equation*}
    2\langle x, b_{V,r^f}(x)\rangle+\int_{\R^3}|g_{V,r^f}(s,u,x)|^2\nu_2(dsdu)\le C(1+|x|^2),
    \end{equation*}
    where $C>0$ is a suitable constant, which guarantees, by means of \cite[Theorem 2.2]{Xi}, that the strong solutions of \eqref{eq:V0R11} and \eqref{eq:rf0R11} (and thus also the strong solutions of \eqref{eq:V0R1} and \eqref{eq:rf0R1}) are non-explosive.\\
    Now we need to distinguish among the two cases $\rho_{sf}>0$ and $\rho_{sf}<0$.\\
    In the case $\rho_{sf}>0$, once we observe that there exists a constant $C>0$ such that for any $x,y \in \R^2$
    \begin{equation*}
        \int_{\R^3}|g_{V,r^f}(s,u,x)-g_{V,r^f}(s,u,y)|^2\nu_2(dsdu) \le C|x-y|,
    \end{equation*}
    we can proceed as in Theorem \ref{thm:exunV0rd0} by defining for any $n \in \N$
    \begin{align*}
     b_n(x)&=(\psi^{(1)}_n(x_1)b_{V}(x_1), \psi^{(2)}_n(x)b_{r^f}(x))=:(b_n^{(1)}(x_1),b_n^{(2)}(x))\\
     g_n(s,u,x)&=(\psi_n^{(1)}(x_1)g_V(s,u_1,x_1), \ \psi_n^{(1)}(x_2)g_{r^f}(s,u,x_2))=:(g_n^{(1)}(s,u_1,x_1),g_n^{(2)}(s,u,x_2))
    \end{align*}
    where for any $x \in \R^2$ it holds $\psi_n^{(2)}(x)=\psi_n^{(1)}(x_1)\psi_n^{(1)}(x_2)$. The only real difference concerns the proof of the pathwise uniqueness for Equations
    \begin{align}
        dV_n(t)&=b_n^{(1)}(V_n(t-))dt+\int_{\R^3}g_n^{(1)}(s,u_1,V_n(t-))\wN_{\wGamma}(dt,dsdu), \qquad V_n(0)=V_0\label{eq:V0n2}\\
        dr^f_n(t)&=b_n^{(2)}(V_n(t-),r_n^f(t-))dt+\int_{\R^3}g_V^{(2)}(s,u,r_n^f(t-))\wN_{\wGamma}(dt,dsdu), \qquad r^f_n(0)=r^f_0.\label{eq:rf0n2}
    \end{align}
    Indeed, once we consider two pairs $(V_n(t),r_n^f(t))$ and $(\wV_n(t),\widetilde{r}_n^f(t))$ of strong solutions of Equations \eqref{eq:V0n2} and \eqref{eq:rf0n2} and we define 
    \begin{equation*}
    \Delta(t):=(V_n(t)-\wV_n(t),r_n^f(t)-\widetilde{r}_n^f(t))=:(\Delta^{(1)}(t),\Delta^{(2)}(t)),
    \end{equation*}
    we can observe that Equation \eqref{eq:V0n2} coincides with Equation \eqref{eq:V1auxn} and then $\Delta^{(1)}\equiv 0$ almost surely by the pathwise uniqueness proved in Theorem \ref{thm:exunV0rd0}. Thus, being $V_n(t)=\wV_n(t)$ almost surely, one can easily check that, defining
    \begin{equation*}
    \ell_n^b(x,y)=b_n(x)-b_n(y)
    \end{equation*}
        it holds
    \begin{align*}
        \langle &\Delta(t), \ell_n^b(V_n(t),r_n^f(t),\wV_n(t),\widetilde{r}_n^f(t))\rangle\\
        &\qquad \qquad =-\kappa_f(\Delta^{(2)}(t))^2-\sigma_f\rho_{sf}\sqrt{V_n(t)}\Delta^{(2)}(t)(\sqrt{r_n^f(t)}-\sqrt{\widetilde{r}_n^f(t)}) \le 0.
    \end{align*}
    Once this is clear, the same arguments as in Theorem \ref{thm:exunV0rd0} conclude the proof in the case $\rho_{sf}>0$.\\
Now let us handle the case $\rho_{sf}<0$. Fix $n_0 \in \N$ big enough to have $V_0,r^f_0 \in \left[\frac{1}{n_0},n_0\right]$. For any $n \ge n_0$ consider a function $\psi^{(3)}_n \in C_c^\infty(\R)$ such that $0 \le \psi^{(3)}_n(x) \le 1$ for any $x \in \R$, $\psi^{(3)}_n(x)=1$ for any $x \in \left[\frac{1}{n},n\right]$ and ${\rm supp}\psi^{(3)}_n \subset \left(\frac{1}{n+1},n+1\right)$. Let us also denote $\psi^{(4)}_n(x_1,x_2)=\psi^{(3)}_n(x_1)\psi^{(3)}_n(x_2)$ for any $(x_1,x_2) \in \R^2$. \\

This time we define
\begin{align*}
     b_n(x)&=(b_V(x_1), \kappa_f(a_f-x_2)-\sigma_f\rho_{sf}\psi_n^{(4)}(x)\sqrt{x_1x_2})=(b^{(1)}_n(x_1),b^{(2)}_n(x))\\
     g_n(s,u,x)&=(\theta_v s+\sigma_v u_1\psi_n^{(3)}(x_1)\sqrt{x_1}, \\
     &\qquad \qquad \qquad \theta_f s+\sigma_f\rho_{vf} u_1\psi_n^{(3)}(x_2)\sqrt{x_2}+\sigma_f \sqrt{1-\rho^2_{vf}}u_2\psi_n^{(3)}(x_2)\sqrt{x_2})\\    &=(g^{(1)}_n(s,u_1,x_1),g^{(2)}_n(s,u,x_2))
\end{align*}
and the auxiliary SDEs
\begin{align}
    dV_n(t)&=b_n^{(1)}(V_n(t-))dt+\int_{\R^3}g^{(1)}_n(s,u_1,V_n(t-))\wN_{\wGamma}(dt,dsdu_1du_2), \quad V_n(0)=V_0 \label{eq:V0R12n}\\
    dr^f_n(t)&=b_n^{(2)}(V_n(t-),r_n^f(t-))dt+\int_{\R^3}g^{(2)}_n(s,u_1,u_2,r^f_n(t-))\wN_{\wGamma}(dt,dsdu_1du_2), \quad r^f_n(0)=r^f_0 \label{eq:rf0R12n}.
\end{align}
By direct evaluation, one can check that there exists a constant $C>0$ such that for any $x \in \R^2$ it holds.
\begin{equation*}
    |b_n(x)|^2+\int_{\R^3}|g_n(s,u,x)|^2\nu_2(dsdu) \le C(1+|x|^2).
\end{equation*}
Moreover, by definition, it is clear that $b_n$ is Lipschitz (since it belongs to $C^1(\R^2)$ and its gradient is bounded by the definition of $\psi^{(4)}(x)$). For the same reason, for any $s \in \R$ and $u \in \R^2$, also $g_n(s,u,\cdot)$ is a a Lipschitz function and one can prove, by direct evaluation of the supremum of the gradient, that there exists a constant $C>0$ such that for any $s \in \R$ and $u,x,y \in \R^2$ it holds
\begin{equation*}
    |g_n(s,u,x)-g_n(s,u,y)|^2 \le C|u|^2|x-y|^2.
\end{equation*}
Hence, we know that there exists a constant $C>0$ such that for any $x,y \in \R^2$ it holds
\begin{equation*}
    |b_n(x)-b_n(y)|^2+\int_{\R^3}|g_n(s,u,x)-g_n(s,u,y)|^2\nu_2(dsdu) \le C|x-y|^2.
\end{equation*}
Hence we are under the classical Lipschitz and sublinear growth hypotheses and by \cite[Theorem 6.2.3]{Applebaum} there exists a pathwise unique strong solution $(V_n(t),r^f_n(t))$ to Equations \eqref{eq:V0R12n} and \eqref{eq:rf0R12n}. Now let us define the sequence of Markov times 
\begin{equation*}
    \tau_n:=\inf\left\{t \ge 0: \ (V_n(t),r^f_n(t))\not\in \left[\frac{1}{n},n\right]\right\}
\end{equation*}
and observe that, as $t<\tau_n$, $V_n(t)$ solves Equation \eqref{eq:V0R1} and thus it coincides with $V(t)$ by pathwise uniqueness. On the other hand, let us also observe that for $m>n$ $r^f_n(t)$ and $r^f_m(t)$ solve the same equation up to $\tau_n \wedge \tau_n^m$, where
\begin{equation*}
    \tau_n^m:=\inf\left\{t \ge 0: \ (V_m(t),r^f_m(t))\not\in \left[\frac{1}{n},n\right]\right\}.
\end{equation*}
By pathwise uniqueness, we know that $r^f_n(t)=r^f_m(t)$ up to $\tau_n \wedge \tau_n^m$ and thus, as a consequence, $\tau_n=\tau_n^m$. This implies that $\tau_n$ is an increasing sequence of Markov times and then we can define the Markov time $\tau_0^f:=\lim_n \tau_n$. Moreover, for any $t \in [0,\tau_0^f)$, one can define
\begin{equation*}
    r^f(t):=r^f_n(t), \qquad t<\tau_n.
\end{equation*}
However, since $V_n(t)=V(t)$ for $t < \tau_n$, $r^f(t)=r^f_n(t)$ actually solves Equation \eqref{eq:rf0R1}. Moreover, if $\widetilde{r}^f(t)$ is another strong solution of \eqref{eq:rf0R1} and we consider
\begin{equation*}
    \widetilde{\tau}_n:=\inf\left\{t \ge 0: \ (V(t),\widetilde{r}^f(t))\not\in \left[\frac{1}{n},n\right]\right\},
\end{equation*}
then $\widetilde{r}^f(t)$ solves also \eqref{eq:rf0R12n} up to $\widetilde{\tau}_n$. This implies that $\widetilde{r}^f(t)=r_n^f(t)$ up to $\widetilde{\tau}_n$ and thus $\widetilde{\tau}_n=\tau_n$. Thus, in particular, $\widetilde{r}^f(t)=r^f(t)$ and we have that $r^f(t)$ is the unique pathwise solution of \eqref{eq:rf0R1} up to $\tau_0^f$.
\end{proof}


\section{Proof of Theorem \ref{thm:QuasiGyongy}}

\begin{proof} \label{proof:QuasiGyongy}
Let us observe that for any $t \in [0,T]$ and any $v \in [0,t]$ it holds
\begin{align*}
    X^n(v)-X^n_m(v)&=\int_0^{v}(b_n(X^n(z-))-b_n(X^n_m(\eta_{m}(z)-)))dz\\
    &+\int_0^v\int_{\R^5}(g_n(s,u,X^n(z-))-g_n(s,u,X^n_m(\eta_m(z)-)))\wN_{\oGamma}(dz,dsdu).
\end{align*}
By the It\^o formula for L\'evy-It\^o processes and some simple algebraic manipulations we have
\begin{align*}
    |&X^n(v)-X^n_m(v)|^2=2\int_0^{v}\langle X^n(z-)-X^n_m(z-),b_n(X^n(z-))-b_n(X^n_m(z-))\rangle dz\\
    &+2\int_0^{v}\langle X^n(z-)-X^n_m(z-),b_n(X^n_m(z-))-b_n(X^n_m(\eta_{m}(z)-))\rangle dz\\
    &+\int_0^v\int_{\R^5}a_1(s,u,z)\wN_{\oGamma}(dz,dsdu)\\
    &+2\int_0^v\int_{\R^5}|g_n(s,u,X^n(z-))-g_n(s,u,X^n_m(z-))|^2\nu_4(dsdu)dz\\
    &+2\int_0^v\int_{\R^5}|g_n(s,u,X^n_m(z-))-g_n(s,u,X^n_m(\eta_m(z)-))|^2\nu_4(dsdu)dz,
\end{align*}
where we set
\begin{align*}
    a_1(s,u,z)&:=(|g_n(s,u,X^n(z-))-g_n(s,u,X^n_m(\eta_m(z)-))|^2\\
    &-2\langle X^n(z-)-X^n_m(z-),g_n(s,u,X^n(z-))-g_n(s,u,X^n_m(\eta_m(z)-))\rangle).
\end{align*}
Let us observe that, by the fact that $b_n$ is Lipschitz and bounded and by Lemma \ref{lem:Lip}, it holds
\begin{align*}
    |&X^n(v)-X^n_m(v)|^2\le  C\int_0^t |X^n(z-)-X^n_m(z-)|^2dz\\
    &\quad +C\int_0^{t}|X^n_m(z-))-X^n_m(\eta_{m}(z)-)|^2 dz+\int_0^v\int_{\R^5}a_1(s,u,z)\wN_{\oGamma}(dz,dsdu),
\end{align*}
where we also used Cauchy-Schwartz and Young's inequality. Now let $p \ge 2$ and observe that, by applying Jensen's inequality, taking the supremum and then the expectation and finally applying Kunita's first inequality and Lemma \ref{lem:Lip}, it holds
\begin{align}\label{eq:estEM2}
\begin{split}
    \E&\left[\sup_{v \in [0,t]}|X^n(v)-X^n_m(v)|^{2p}\right]\le C\int_0^t \E\left[\sup_{0 \le w \le z}|X^n(w)-X^n_m(w)|^{2p}\right]dz\\
    &+C\int_0^{t}\E[|X^n_m(z-)-X^n_m(\eta_{m}(z)-)|^{2p}] dz.
\end{split}
\end{align}
Now let us estimate $\E[|X^n_m(z-)-X^n_m(\eta_{m}(z)-)|^{2p}]$. By stochastic continuity of $X^n_m(t)$, we know that $X^n_m(z)=X^n_m(z-)$ and $X^n_m(\eta_{m}(z)-)=X^n_m(\eta_{m}(z))$ almost surely. Moreover, we have
\begin{align*}
    X^n_m(z-)-X^n_m(\eta_{m}(z))&=\int_{\eta_m(z)}^zb_n(X_m^n(\eta_m(v)-))dv\\
    &+\int_{\eta_m(z)}^z\int_{\R^5}g_n(s,u,X_m^n(\eta_m(v)-))\wN(dv,dsdu)
\end{align*}
and then, by using the fact that $b_n$ is bounded, Lemma \ref{lem:Lip} and Jensen's inequality, we achieve
\begin{align*}
    \E&\left[|X^n_m(z)-X^n_m(\eta_{m}(z))|^{2p}\right]\\
    &\le Cm^{-4p}+C\E\left[\sup_{w \in [\eta_m(z),z]}\left|\int_{\eta_m(z)}^w\int_{\R^5}g_n(s,u,X_m^n(\eta_m(v)-))\wN(dv,dsdu)\right|^{2p}\right].
\end{align*}
Again, by Kunita's first inequality, Lemma \ref{lem:Lip} and Jensen's inequality holds
\begin{align*}
    \E&\left[|X^n_m(z)-X^n_m(\eta_{m}(z))|^{2p}\right]\le Cm^{-4p}+Cm^{-2p}+Cm^{-2}\le Cm^{-2}.
\end{align*}
Using the latter inequality into Inequality \eqref{eq:estEM2} we have
\begin{align*}
\begin{split}
    \E&\left[\sup_{v \in [0,t]}|X^n(v)-X^n_m(v)|^{2p}\right]\le C\int_0^t \E\left[\sup_{0 \le w \le z}|X^n(w)-X^n_m(w)|^{2p}\right]dz+Cm^{-2}.
\end{split}
\end{align*}
which, in turn, by Gr\"onwall's inequality implies
\begin{equation*}
    \E\left[\sup_{v \in [0,t]}|X^n(v)-X^n_m(v)|^{2p}\right] \le Cm^{-2}.
\end{equation*}
Now consider any $\theta<\frac{1}{4}$ and use Markov's inequality to remark that
\begin{equation*}
    \bQ(\sup_{t \in [0,T]}|X^n(t)-X^n_m(t)|\ge m^{-\theta})\le m^{2p\theta}\E\left[\sup_{t \in [0,T]}|X^n(v)-X^n_m(v)|^{2p}\right] \le C m^{2(p\theta-1)}.
\end{equation*}
Recalling that $\frac{1}{2\theta}>2$, we can always choose $p \ge 2$ so that $2(1-p\theta)>1$. With such a choice, it holds
\begin{equation*}
    \sum_{m=1}^{+\infty}\bQ(\sup_{t \in [0,T]}|X^n(t)-X^n_m(t)|\ge m^{-\theta})\le C \sum_{m=1}^{+\infty}m^{2(p\theta-1)}<\infty.
\end{equation*}
The Borel-Cantelli Lemma concludes the proof.
\end{proof}

\newpage


\end{document}